\documentclass[11pt]{article}
\usepackage{amsmath,amssymb,amsthm,comment,amsfonts,mathrsfs}
\usepackage{amsfonts}
\usepackage[latin1]{inputenc}
\usepackage{color}
\usepackage{mathtools}
\usepackage{hyperref}

\usepackage[top=1in,bottom=1in,left=1in,right=1in]{geometry}

\newtheorem{lem}{Lemma}[section]

\newtheorem{thm}{Theorem}[section]
\newtheorem{lemma}[thm]{Lemma}
\newtheorem{cor}[thm]{Corollary}
\newtheorem{remark}[thm]{Remark}
\newtheorem{prop}[thm]{Proposition}

\numberwithin{equation}{section}
\newtheorem{example}{Example}[section]

\newcommand{\E}{\mathcal{E}}

\newcommand{\eps}{\varepsilon}
\newcommand{\R}{\mathbb{R}}
\newcommand{\N}{\mathbb{N}}

\newcommand{\lap}{\Delta}

\newcommand{\be}{\begin{equation} \label}
\newcommand{\ee}{\end{equation}}

\begin{document}

\begin{center}		
	\section*{Refined blow-up behavior for semilinear heat equations with arbitrarily fast growing nonlinearities}
	
\end{center}

\begin{center}
	Loth Damagui CHABI \footnote{Email: chabi@math.univ-paris13.fr} \\
		Universit\'e Sorbonne Paris Nord \& CNRS UMR 7539, Laboratoire Analyse G\'eom\'etrie et Applications, 93430 Villetaneuse,  France.
	\\
		$ $
	\end{center}

\begin{abstract}
We study radially nonincreasing, positive blow-up solutions of a class of superlinear heat equations in the non-scale-invariant setting.
This class contains nonlinearities with arbitrarily fast, super-exponential growth.
 Under suitable structural assumptions on the nonlinearity, we derive the sharp global blow-up profile in the original space-time variables.
 More precisely, for positive nonlinearities $f$ satisfying 
$$
 \lim_{X\to \infty}f'(X)G(X)= q\ge1\quad \quad 
 \hbox{  where }\quad\quad G(X)=\int_{X}^{\infty}\frac{ ds}{f(s)}<\infty\quad\forall\ X>0,
$$ 
and assuming either the time monotonicity $u_t\ge0$ holds or $q>\frac{p_S}{p_S-1}$, we have
	 $$
	u(x,t)=(1+o(1))\,G^{-1}\bigg(T-t+\frac{|x|^2}{4q|\log |x|^2|}\bigg),
		\quad \hbox{as $(x,t)\to (0,T)$.} 
$$
This estimate in particular provides the sharp final space profile and the corresponding refined space-time profile. Moreover, our result recovers the universal structure of the global blow-up profile as in \cite{C25-2,CS25}, given by the resolvent $G^{-1}$ of the associated ODE evaluated at a time-space building block, which only depends on the limit $q$.

\vskip 3pt

\noindent{\bf Key words:} Semilinear heat equation, blow-up profile, refined space-time behavior, super-exponential nonlinearities, regularly varying functions.

\vskip 3pt

\noindent{\bf AMS subject classifications:} 35K58, 35B44, 35B40, 35K05

\end{abstract}

\section{Introduction}

Consider the following semilinear heat equation 
	\begin{equation}\label{SHE}
	\begin{cases}
	u_t-\Delta u=f(u),\quad x\in \Omega,\ t>0,\\
	u=0,\quad x\in \partial\Omega,\ t>0,\\
	u(x,0)=u_0(x),\quad x \in \Omega,
	\end{cases},
	\end{equation}
	where $\Omega \subset \mathbb{R}^n\ (n\ge 1)$ is smooth, $u_0\in L^\infty(\Omega)$  and $f\in C^1([0,\infty))$.
For the model cases 
$$\hbox{$f(u)=|u|^{p-1}u$ \quad ($p>1$) \quad and \quad $f(u)=e^u$},$$
the blowup behavior has been the subject of extensive research
	and is rather well understood, especially in the Sobolev subcritical power case $1<p<(n+2)/(n-2)_+$
	 where a complete classification of blow-up profiles is available.
	For $f(u)=|u|^{p-1}u$ we refer to \cite{bebernes1987description,BK94,filippas1992refined,GK85,GK87,
GK89,GMS04,MM2004a,MM2004b,merle1998optimal,merle1998refined,PQS07b,Q21,quittner2019superlinear,
S19,Vel93b,V92}
	 and, for $f(u)=e^u$, to \cite{bebernes1987description,bebernes1992final,BT, bressan1990asymptotic,E,ET,FP08,
FM85,HV93,pulkkinen2011blow,S22,W}.
	However the analysis of sharp blow-up asymptotics in these works heavily depends on the exact form of the nonlinearities and their scale invariance properties, 
	namely the invariance of the equations under the transformations 
	$$u_\lambda(x,t)=\lambda^{2/(p-1)}u(\lambda x, \lambda^2 t) \quad (\lambda >0),$$
and
$$u_\lambda(x,t)=u(\lambda x, \lambda^2 t)+2\log \lambda \quad (\lambda >0)$$
respectively (or at least on an asymptotic homogeneity of the nonlinearities as the variable $u\to\infty$).

This paper is a contribution to the understanding of the precise singularity formation 
	for solutions of superlinear parabolic problems involving {\it non scale invariant and arbitrarily fast growing nonlinearities},
in continuation to the works \cite{duong2018construction,HZ22,souplet2022universal,CS25,QSLiouv,FK25,C25-1,C25-2,Ryoto} (see below for more details on these works).

	\smallskip

Let us briefly review the known results on the blow-up behavior of solutions to problem \eqref{SHE} for non-scale-invariant nonlinearities.
In the early work \cite{FM85}, blow-up rate estimates were established for positive, time-increasing solutions of \eqref{SHE} with rather general nonlinearities. For suitable classes of non-scale-invariant nonlinearities, results concerning blow-up sets were obtained in \cite{F14}. In \cite{duong2018construction,HZ22}, for problem \eqref{SHE} with the particular logarithmically perturbed nonlinearity
$f(u)=|u|^{p-1}u\log(2+u^2)$,
blow-up rate estimates were derived without sign or monotonicity assumptions, and special solutions with a prescribed blow-up profile were constructed (see also \cite{hamza2021,RZ} for blow-up rate estimates for the related nonlinear wave equations).
In \cite{C25-1,CS25,QSLiouv,souplet2022universal}, positive solutions of problem \eqref{SHE} were studied for general regularly varying nonlinearities of the form
$f(u)=u^pL(u)$,
where $L$ is slowly varying function. In \cite{QSLiouv,souplet2022universal}, parabolic Liouville-type theorems were established, together with applications to universal bounds and blow-up rate estimates. An analogue of the classical Giga--Kohn result was proved in \cite{C25-1}, using, among other ingredients, the type~I blow-up estimate from \cite{souplet2022universal}. Moreover, building on the results of \cite{C25-1}, \cite{CS25} established the final and space-time blow-up profiles of positive, radially decreasing solutions, revealing a {\it structural universality} of the blow-up profile that is robust with respect to the slowly varying factor $L(u)$.
The paper \cite{C25-2} considers nonlinearities of the form
$f(u)=e^uL(e^u)$.
The author obtained a complete description of the blow-up profile for radially symmetric, nonincreasing solutions under the assumption of time monotonicity. This class includes the nonlinearity $f(u)=u^pe^u$. Then the author of \cite{Ryoto} investigated the more general super-exponential nonlinearities
$f(u)=u^qe^{u^p}$,
where $p>1$ and $q\in\{0\}\cup[1,\infty)$. An analogue of the Giga--Kohn property was established in \cite{Ryoto}, but only in dimensions $n=1,2$. More recently, a type I blow-up rate result was obtained for more general nonlinearities in radial setting \cite{FK25}.

	\smallskip

In this paper, we are interested in non-scale-invariant nonlinearities with arbitrarly fast growth at infinity. We note that, as far as blow-up rate estimates are concerned, the results in \cite{FM85} and the recent works \cite{FK25} and \cite[Theorem~3.1]{CS25} already cover such nonlinearities under suitable assumptions.
Our main goal is to provide a precise description of the blow-up asymptotics, including the space-time and final blow-up profiles, for nonnegative radially decreasing solutions of problem \eqref{SHE}. Our analysis builds in part on the approach developed in our previous works \cite{C25-1,CS25} for regularly varying nonlinearities of power type, as well as in \cite{C25-2} for nonlinearities of exponential type.
   
   \section{Main results}

Throughout the paper, the nonlinear term $f$ is assumed to satisfy
	 \begin{equation}\label{sem0}
	 f\in C^1([0,\infty)),\quad f(0)\ge 0,\ \hbox{  $f$ is $C^2$ and positive for large $s$}
	 \end{equation}
	 \begin{equation}\label{defF}
	f(s)\to \infty\quad\hbox{as } s\to \infty\quad \hbox{and }\  G(X):=\int_{X}^{\infty}\frac{ds}{f(s)}<\infty\quad\hbox{for any }\ X>0.  
	 \end{equation} 
	By standard theory, 
	 for $u_0\in L^\infty(\Omega)$ with $u_0\ge 0$ (suitably large), problem \eqref{SHE} admits a unique nonnegative classical solution with maximal existence time $T = T(u_0) \in (0,\infty),$ which we denote by $u$ throughout this paper. Moreover, $u$ blows up in the sense that
	\begin{equation*}
	\lim_{t\to T} \|u(t)\|_\infty=\infty.
	\end{equation*}
	 In this case, $T$ is called the blowup time of $u$. Given $a \in \overline{\Omega}\subset\mathbb{R}^n$, we say that $a$ is a blowup point of $u$ if there exists $(a_j,t_j)\to(a,T)$ such that $u(a_j,t_j)\to\infty$ as $j\to \infty$. 
	 We also note that there exists a large $X_1>0$ such that $G:[X_1,\infty)\to (0,G(X_1)]$ is decreasing and that
 $$	 
 \psi(t):=G^{-1}(T-t)
$$
 is the positive solution of the corresponding ODE, i.e $y'(t)=f(y)$, which blows up  at the same time $t=T$.

We describe conditions to be assumed in our main results.
For the initial data, we assume
			\begin{equation}\label{Omega-u0}
		\hbox{$\Omega=\mathbb{R}^n$ , $0\le u_0\in L^\infty(\Omega)$, radially symmetric, nonincreasing in $r = |x|$ and nonconstant.}			 
			 \end{equation} 
This guarantees that $u(x,t)=u(r,t)\ge 0$ with $r=|x|$ and $u_r\le 0$. Moreover, by a simple comparison principle, we have 
\begin{equation}\label{belowrate}
	u(0,t)=\|u(t)\|_{L^\infty(\Omega)} \geq G^{-1}(T-t)\quad\  \text{for}\ t \in (0,T) .
\end{equation}

Concerning the nonlinear term $f$, in addition to \eqref{sem0}-\eqref{defF}, we impose that there exist $q\ge 1$  such that
 \be{qlim}
 \Big(\frac{f}{\log f}\Big)''\ge0 \hbox{ and }\ f'(X)G(X)=q+o(|\log G(X)|^{-\alpha_0})\hbox{ for large $X$ and some $\alpha_0\in(1/2,1)$.}
 \ee
This condition for $q=1$ allows nonlinearities with arbitrarily fast superexponential growth (examples will be given below).

Our first main result is the following global, refined blow-up estimate, valid in the scale of the original variables~$(x, t)$.

\begin{thm}\label{Thm} 
 Let $f$ satisfy the assumptions \eqref{sem0}-\eqref{defF} and \eqref{qlim}. Let $u_0$ and $\Omega$ be as in \eqref{Omega-u0} such that $T=T(u_0)<\infty$. Assume that  $u_t\ge 0$ or $q>q_S:=p_S/(p_S-1)$. Then we have
	\begin{equation}\label{infer1}
	u(x,t)=(1+o(1))G^{-1}\bigg(T-t+\frac{|x|^2}{4q|\log|x|^2|}\bigg)\quad \text{as } (x,t)\to (0,T).
	\end{equation}
\end{thm}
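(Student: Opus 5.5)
Following \cite{C25-1,CS25,C25-2}, the natural plan is to pass to a transformed unknown in which the equation becomes asymptotically scale invariant and then prove matching upper and lower bounds. We set
$$w(y,s)=-\log\frac{G(u(x,t))}{T-t},\qquad x=y\sqrt{T-t},\ s=-\log(T-t).$$
By \eqref{belowrate} and monotonicity of $G$, $w\ge0$ at $y=0$. Writing $g=1/f$, we have $G'=-g$ and $\Delta G(u)=-g\Delta u+g^2f'|\nabla u|^2$, so $v:=G(u)$ solves
$$v_t-\Delta v=-1-\frac{f'(u)}{f(u)^2}|\nabla u|^2=-1-\frac{f'(u)G(u)}{G(u)}|\nabla v|^2 .$$
By \eqref{qlim}, $f'G=q+o(|\log G|^{-\alpha_0})$, so $v$ satisfies, up to a small perturbation, $v_t-\Delta v=-1-q|\nabla v|^2/v$. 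This is exactly the equation satisfied by $G_p(U)$ for the pure power $f=U^p$ with $q=p/(p-1)$ (for $q=1$ it is the exponential case). Consequently $V:=v^{-1/(q-1)}$, or $e^{-v}$-type variables when $q=1$, satisfies a perturbation of $V_t-\Delta V=V^p$ with $p=q/(q-1)$, with error terms of relative size $o(|\log(T-t)|^{-\alpha_0})$. The target \eqref{infer1} is then equivalent to
$$v(x,t)=(1+o(1))\Big(T-t+\frac{|x|^2}{4q|\log|x|^2|}\Big),$$
which is the known universal profile for the model problem.

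\textbf{Step 1 (type I bound and Giga--Kohn convergence).} We prove $c(T-t)\le v(0,t)$, i.e. $G(u(0,t))\ge c(T-t)$. Under $u_t\ge0$ this follows from the Friedman--McLeod argument \cite{FM85}, using the convexity hypothesis $(f/\log f)''\ge0$ to build the auxiliary functional $u_t-\varepsilon F(u)$. If instead $q>q_S$, the associated power $p=q/(q-1)<p_S$ is Sobolev subcritical, and the estimate comes from \cite[Theorem~3.1]{CS25} or \cite{FK25}, as in \cite{souplet2022universal}. Next, in similarity variables $W(y,s)=v(y e^{-s/2},T-e^{-s})/e^{-s}$, a perturbed Lyapunov functional yields $W\to1$ locally uniformly in $y$. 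This is the Giga--Kohn property, obtained as in \cite{C25-1}. The perturbation must be integrable in $s$; this is where $\alpha_0>1/2$ enters, since the energy identity involves squares of the error, giving $\int s^{-2\alpha_0}\,ds<\infty$.

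\textbf{Step 2 (refined intermediate-scale expansion).} Linearizing around $W\equiv1$ in the weighted space $L^2_\rho$, the radial nonincreasing hypothesis excludes the unstable and the nonradial neutral modes. The dynamics therefore follow the neutral Hermite mode $h_2=|y|^2-2n$, which gives $W(y,s)=1+\frac{|y|^2-2n}{4qs}+o(1/s)$. The size of the perturbation, $o(s^{-\alpha_0})$ with $\alpha_0<1$, is dangerous here because it is larger than $1/s$. To handle it, the error must enter the projected ODE $a'=-ca^2+\text{error}$ only quadratically, or through a cancellation. I expect this to be the main obstacle. My first attempt would be to absorb the leading error into a modified $q$-dependent change of variable $\tilde G$, chosen so that the remaining error is $O(s^{-2\alpha_0})=o(s^{-1})$. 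This works precisely because $2\alpha_0>1$.

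\textbf{Step 3 (outer region and matching).} We then extend the expansion from $|y|\le K$ to $|y|\sim\sqrt{s}$, i.e. $|x|^2\sim (T-t)|\log(T-t)|$. For this we compare $v$ with sub- and supersolutions of the form
$$(1\pm\varepsilon)\Big(T-t+\frac{|x|^2}{4q|\log|x|^2|}\Big)$$
in the region $|x|\ge K\sqrt{(T-t)|\log(T-t)|}$, using $u_r\le0$ and the matched values on the parabolic boundary, in the manner of Merle--Zaag/Velázquez-type arguments adapted in \cite{CS25,C25-2}. The computation uses only the perturbed $v$-equation with coefficient $q$, so the profile depends only on $q$. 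Finally, letting $t\to T$ with $x$ fixed recovers the final profile, and the three regions glue together to give \eqref{infer1} uniformly as $(x,t)\to(0,T)$.
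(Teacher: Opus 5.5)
Your Steps 1--2 follow the paper's route: a transformed unknown built from $G$, a perturbed energy using $2\alpha_0>1$, and linearization with the $h_2$ mode. The argument breaks in Step 3, and Step 2 misjudges its own difficulty.

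\textbf{Step 3.} Your comparison is set in $\{|x|\ge K\sqrt{(T-t)|\log(T-t)|}\}$, i.e.\ $|y|\ge K\sqrt s$. It therefore needs $v=(1+o(1))\Phi$, with $\Phi:=T-t+\frac{|x|^2}{4q|\log|x|^2|}$, on the inner boundary $|y|=K\sqrt s$. Step 2 only controls $|y|\le K$ (and $H^1_\rho$). Bridging $|y|=O(1)$ to $|y|\sim\sqrt s$ is exactly the intermediate-region problem you say Step 3 solves, so the argument is circular. Radial monotonicity does not fill this gap. It only gives $v|_{|y|=K\sqrt s}\ge v|_{|y|=K}=(T-t)(1+o(1))$, whereas the value to be matched is $(T-t)(1+\frac{K^2}{4q}+o(1))$.

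The barriers are also oriented the wrong way. Set $N[\bar v]:=\bar v_t-\Delta\bar v+1+q|\nabla\bar v|^2/\bar v$. Then $N[(1\pm\varepsilon)\Phi]=\mp\varepsilon+O(1/|\log|x||)$, because the time derivative is rescaled by $1\pm\varepsilon$ but the source $-1$ is not. Hence $(1+\varepsilon)\Phi$ is a subsolution and $(1-\varepsilon)\Phi$ a supersolution, which is useless for sandwiching $v$.

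The paper never enters the region $|y|\sim\sqrt s$.
\begin{itemize}
\item \emph{Upper bound on $u$.} It uses the Friedman--McLeod-type gradient estimate \eqref{Rappelupper}, which is where convexity of $f/\log f$ enters. This integrates to \eqref{upperTheoL}. Combined with $H^{-1}(Y)\sim G^{-1}(Y/(q|\log Y|))$ and $G(u(0,t))\sim T-t$, it gives the global upper profile directly.
\item \emph{Lower bound on $u$.} Given $x_0$, one restarts at $t_0=T-\sigma$ with $\sigma=|x_0|^2$, so that $x_0$ sits at $|y|=1$. Only the bounded-$y$ ($L^2_\rho$) expansion of Theorem~\ref{theo} at time $t_0$ is then used. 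The subsolution $G^{-1}(G(V)+t_0-t)$, with $V=M+S(t-t_0)(u(t_0)-M)$, together with a Gaussian heat-kernel bound on the remainder, does the rest. It turns the $O(1/|\log\sigma|)$ quadratic deficit of $u(t_0)$ on $|x|\le\sqrt\sigma$ into the shift $\frac{\sigma}{4q|\log\sigma|}$ in $G(u)$ for $t\in[T-\sigma/2,T)$.
\end{itemize}

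\textbf{Step 2.} The perturbation in your own $v$-equation is $(f'G-q)(u)|\nabla v|^2/v$, which in similarity variables is $o(s^{-\alpha_0})|\nabla W|^2/W$. It vanishes quadratically at the constant state. So in the projected ODE for the $h_2$-coefficient $a\sim1/s$ of $W-1$, it contributes $o(1)\int|\nabla W|^2\rho=o(a^2)$; this is Lemma~\ref{controlRL}. No modified $\tilde G$ is needed, and this quadratic structure is exactly why the $G$-based change of unknown removes the linear $\tau(s)$-term of earlier works.

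Your proposed criterion would not suffice anyway. An additive error $O(s^{-2\alpha_0})$ is not negligible against $a'\sim a^2\sim s^{-2}$. Only additive errors that are $o(s^{-2})$, or $o(1/s)$ coefficients multiplying the deviation, are harmless.

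\textbf{Smaller points.} Step 1 also needs a Liouville step to identify the stationary limit as the constant: Pohozaev for $q>q_S$, or $\phi+\frac12 y\cdot\nabla\phi\ge0$ inherited from $u_t\ge0$. Your variable should be $v^{-(q-1)}$, not $v^{-1/(q-1)}$, and $\log v$ rather than $e^{-v}$ when $q=1$.
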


On the other hand, since $u\geq0$ is radially symmetric decreasing in $|x|$ for all $t\in (0,T)$ and $0$ is the only blow-up point of $u$,
 standard parabolic estimates yields that $u(x,T):=\lim_{t\to T}u(x,t)$ exists for all $x\ne 0$.
Hence, as a direct consequence of Theorem~\ref{Thm}, 
we obtain a sharp description of the final blow-up profile and the refined space-time profile:

\begin{cor}\label{Debo}
	Under the assumptions of Theorem~\ref{Thm}, we have 
	
		\begin{itemize}
			\item [(i)]\textbf{(final profile)} \begin{equation}\label{fp}
			u(x,T)=(1+o(1)) G^{-1}\bigg(\frac{|x|^2}{4q|\log|x|^2|}\bigg)   \quad\text{as }\ x\to 0;
			\end{equation}
			\item [(ii)] \textbf{(refined space-time profile)}
			\begin{equation}\label{lou}
			u\bigl(\xi\sqrt{(T-t)|\log(T-t)|},t\bigr)= (1+o(1))G^{-1}\bigg[(T-t)\bigg(1+\frac{|\xi|^2}{4q}\bigg)\bigg] 
			\quad\text{as }\ t\to T,
			\end{equation}
			uniformly for $\xi$ bounded.
		\end{itemize}
\end{cor}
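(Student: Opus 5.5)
Both parts follow from Theorem~\ref{Thm} by substituting particular regimes of $(x,t)$ into \eqref{infer1}. The one technical point is that $G^{-1}$ is evaluated at an argument known only up to the error implicit in $(1+o(1))$, which is harmless because the statement itself only asserts a multiplicative $(1+o(1))$ equality.

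For (i), fix $x\ne0$ small. Since $u(x,t)\to u(x,T)$ as $t\to T$ and $G^{-1}$ is continuous on $(0,G(X_1)]$, letting $t\to T$ in \eqref{infer1} gives \eqref{fp}. For this passage to be legitimate, the $o(1)$ in \eqref{infer1} must be uniform as $(x,t)\to(0,T)$ jointly. That is, for every $\eps>0$ there is $\delta>0$ such that $|u/G^{-1}(\cdot)-1|\le\eps$ whenever $|x|<\delta$ and $T-t<\delta$. With this uniformity, we fix $x$ with $0<|x|<\delta$ and let $t\to T$. The argument $T-t+|x|^2/(4q|\log|x|^2|)$ then decreases to $|x|^2/(4q|\log|x|^2|)$, which is positive and lies in the range of $G$ for $x$ small. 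By continuity, $|u(x,T)/G^{-1}(|x|^2/(4q|\log|x|^2|))-1|\le\eps$, which is \eqref{fp}.

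For (ii), set $x=\xi\sqrt{(T-t)|\log(T-t)|}$ with $|\xi|\le M$. Then $x\to0$ uniformly as $t\to T$, so \eqref{infer1} applies. We have $|x|^2=|\xi|^2(T-t)|\log(T-t)|$. For $\xi\ne0$,
\[
|\log|x|^2| = |\log(T-t)| - \log|\log(T-t)| - \log|\xi|^2 = |\log(T-t)|\,(1+o(1)),
\]
uniformly for $\eta\le|\xi|\le M$. Consequently
\[
T-t+\frac{|x|^2}{4q|\log|x|^2|}=(T-t)\Big(1+\frac{|\xi|^2}{4q}(1+o(1))\Big)=(T-t)\Big(1+\frac{|\xi|^2}{4q}\Big)(1+o(1)).
\]
For $|\xi|<\eta$ there is no lower bound on $|\log|x|^2|$ to exploit, so we argue by monotonicity instead. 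Since $s\mapsto s/|\log s|$ is increasing for small $s$, we get $|x|^2/|\log|x|^2|\le \eta^2(T-t)$ for small $T-t$. The argument therefore again equals $(T-t)(1+|\xi|^2/4q)(1+O(\eta^2))$. Letting $\eta\to0$ after $t\to T$, the estimate holds uniformly for all $|\xi|\le M$.

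The main obstacle is converting a multiplicative error $(1+o(1))$ in the argument of $G^{-1}$ into a $(1+o(1))$ error in its value. For very fast growing $f$ this conversion is not automatic, since $G^{-1}$ is extremely steep. The assumption \eqref{qlim}, namely $f'G\to q$, settles it. Writing $\phi=G^{-1}$, we have $\phi'(s)=-f(\phi(s))$. Moreover $(\log f(\phi))'=f'(\phi)\phi'/f(\phi)=-f'(\phi)$, and $f'(\phi(s))\,s\to q$, so $d\log f(\phi(s))/d\log s\to -q$. This means $f\circ\phi$ is regularly varying at $0$ with index $-q$.

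From here the argument depends on $q$. If $q>1$, then $\phi(s)=\int_s^{G(X_1)}f(\phi)+C$ is regularly varying with index $1-q$, so $\phi(\lambda s)/\phi(s)\to\lambda^{1-q}$ uniformly for $\lambda$ near $1$, and the $(1+o(1))$ factor passes through. If $q=1$, then $\log\phi$ is slowly varying, which does not immediately give $\phi(\lambda s)\sim\phi(s)$. In that case I would use $\phi(\lambda s)-\phi(s)=\int_{\lambda s}^{s}f(\phi)\,d\sigma\approx s f(\phi(s))|\log\lambda|$. The ratio of this to $\phi(s)$ tends to $0$ provided $sf(\phi(s))=o(\phi(s))$, that is, $X/(f(X)G(X))\to\infty$. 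This follows from $fG\cdot(\text{growth})$ considerations under \eqref{qlim}, with the quantitative rate $o(|\log G|^{-\alpha_0})$ if needed. If this is insufficient, one can instead write $G^{-1}$ in the forms \eqref{fp} and \eqref{lou} directly, with the error absorbed as in the proof of Theorem~\ref{Thm}.
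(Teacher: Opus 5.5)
Your reduction is the one the paper intends. The paper states the corollary as a direct consequence of Theorem~\ref{Thm}. Part (i) follows by letting $t\to T$ at fixed $x\neq0$, which needs only joint uniformity in \eqref{infer1} and continuity of $G^{-1}$, as you say. Part (ii) follows by substituting $x=\xi\sqrt{(T-t)|\log(T-t)|}$ and then removing the factor $1+o(1)$ from the argument of $G^{-1}$, which is the content of \eqref{equivGG} in Lemma~\ref{lem1-1}(i). Your computation of the building block in (ii), including the split into $|\xi|\ge\eta$ and $|\xi|<\eta$, is correct.

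The gap is the step you yourself call the main obstacle, in the case $q=1$. ``$fG\cdot(\text{growth})$ considerations'' is not an argument, and the fallback sentence does not supply one. The missing line is
\[
(fG)'(X)=f'(X)G(X)-1\longrightarrow q-1,\qquad\text{hence}\qquad \frac{f(X)G(X)}{X}\longrightarrow q-1\quad(X\to\infty)
\]
by l'H\^opital. With $\phi=G^{-1}$ and $s=G(X)$, this says $s\phi'(s)/\phi(s)=-(fG)(\phi(s))/\phi(s)\to-(q-1)$. Hence, for $s$ small and $\lambda\in[\frac12,2]$,
\[
\Bigl|\log\frac{\phi(\lambda s)}{\phi(s)}\Bigr|=\Bigl|\int_s^{\lambda s}\frac{(fG)(\phi(z))}{\phi(z)}\,\frac{dz}{z}\Bigr|\le q\,|\log\lambda| .
\]
This gives the conversion uniformly as $\lambda\to1$, for every $q\ge1$ at once, so neither your case split nor your integral estimate is needed. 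It shows that $G^{-1}$ is regularly varying at $0^+$ with index $1-q$. For $q=1$ it is therefore $\phi$ itself, not merely $\log\phi$, that is slowly varying. Karamata's theorem in the index $-1$ case would also have given you $sf(\phi(s))=o(\phi(s))$ directly.

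Having this argument self-contained is worthwhile. The paper derives \eqref{equivGG} from \eqref{ell12a2}, but that estimate fails for $q=1$: for $f(X)=e^X$ one has $G((1-\eps)X)/G(X)=e^{\eps X}$. Moreover, an upper bound on $G((1-\eps)X)$ points in the wrong direction for \eqref{equivGG} anyway.
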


As part of the proof of Theorem \ref{Thm}, we derive an analogue of Giga-Kohn property with key second term, for bounded $y$. This expansion is of independent interest because it yields a sharper asymptotic description of the solution than \eqref{infer1}, although it is valid only in the more restrictive regime 
$|x|=O(\sqrt{T-t})$. 
 Here and in the rest of this paper, we denote 
	\begin{equation}\label{defL2H1}
	L^2_\rho:=\Bigl\{v\in L^2_{loc}(\mathbb{R}^n);\ \int_{\mathbb{R}^n}v^2(y)\rho(y)dy<\infty\Bigr\},\quad 
	H^1_\rho:=\bigl\{f\in L^2_\rho;\ \nabla f\in L^2_\rho\bigr\},\quad \rho(y):=e^{-|y|^2/4}.
	\end{equation}

\begin{thm}\label{theo}
	Under assumptions of Theorem \ref{Thm}, we have:

$(i)$ if $q=1$
\begin{equation}\label{A1}
\log \frac{T-t}{G(u(y\sqrt{T-t},t))}=-\frac{ |y|^2-2n+o(1)}{4|\log(T-t)|},\quad \text{as} \ t\to T,
	\end{equation}
	
$(ii)$ if $q>1$
\begin{equation}\label{A1-2}
\Big(\frac{T-t}{G(u(y\sqrt{T-t},t)}\Big)^{(q-1)}=1-\frac{q-1}{q}\frac{ |y|^2-2n+o(1)}{4|\log(T-t)|},\quad \text{as} \ t\to T,
	\end{equation}
	with convergence in $H^1_\rho(\mathbb{R}^n)$ and uniformly for bounded $y$.
\end{thm}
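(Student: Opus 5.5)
The plan is to transform the problem into similarity variables adapted to the non-scale-invariant nonlinearity, and then reduce to the classical Giga--Kohn/Filippas--Kohn/Vel\'azquez analysis near a constant equilibrium. Set $s=-\log(T-t)$, $y=x/\sqrt{T-t}$. For $q>1$ define
\[
w(y,s)=\Big(\frac{T-t}{G(u(x,t))}\Big)^{q-1},
\]
and for $q=1$ define $w=\log\frac{T-t}{G(u)}$. Using $G'=-1/f$ and $u_t-\Delta u=f(u)$, the function $\Phi=G(u)$ satisfies
\[
\Phi_t-\Delta\Phi=-1-f'(u)\,\frac{|\nabla u|^2}{f(u)}=-1-f'(u)G(u)\,\frac{|\nabla\Phi|^2}{\Phi}\cdot\frac{f(u)G(u)}{1}\cdot\frac1{f(u)G(u)}.
\]
After simplification this reads $\Phi_t-\Delta\Phi=-1-f'(u)G(u)\,\frac{|\nabla \Phi|^2}{\Phi}$ (up to the exact identity $|\nabla u|^2=f^2|\nabla\Phi|^2$). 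By \eqref{qlim}, $f'G=q+o(|\log\Phi|^{-\alpha_0})$. The key point is that, with $q$ frozen, $\Phi\mapsto\Phi^{1-q}$ (respectively $\log\Phi$) exactly linearizes the quadratic gradient term. Hence $w$ solves
\[
w_s=\Delta w-\tfrac y2\cdot\nabla w+(\text{quadratic terms in }w-1,\nabla w)+\text{error},
\]
where the principal part is the Hermite operator $\mathcal L=\Delta-\frac y2\cdot\nabla+1$ acting on $w-1$ (resp.\ on $w$). The error is $O(s^{-\alpha_0}|\nabla w|^2)$, which is integrable against the expected $s^{-1}$ decay only because $\alpha_0>1/2$: $|\nabla w|^2\sim s^{-2}$, and $s^{-\alpha_0-2}\cdot s$ is summable in the projection ODEs.

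Step 1 establishes the a priori bounds and the Giga--Kohn property $w\to1$ (resp.\ $w\to0$) in $L^2_\rho$ and locally uniformly. This requires the type~I estimate $u(0,t)\le G^{-1}((1-\eps)(T-t))$, which I take from the cited results (\cite{FM85} when $u_t\ge0$, or \cite{FK25}/\cite[Thm 3.1]{CS25} for $q>q_S$, the latter being where the Sobolev threshold enters via a Liouville argument). Radial monotonicity, an energy (Lyapunov) functional for the $w$-equation, and the fact that $0$ is the only blow-up point then give convergence to the constant equilibrium; the $u_t\ge0$ case excludes the trivial/degenerate limit.

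Step 2 expands $v=w-1$ (or $w$) in Hermite eigenfunctions. Radial symmetry kills the odd modes, so the relevant neutral mode is $h_2(y)=|y|^2-2n$. The projections satisfy $a_0'=a_0+O(\|v\|^2)$, $a_2'=-c\,a_2^2+O(s^{-1-\delta})$ for the explicit constant $c$ from the quadratic term, and negative modes are dominated. The standard Filippas--Kohn dichotomy then applies: either $v$ decays exponentially, or the neutral mode dominates with $a_2(s)\sim 1/(cs)$. The coefficient $c$ comes from the term $\frac{q}{q-1}\cdot|\nabla w|^2/w$ type nonlinearity, which yields the factor $\frac{q-1}{4q}$; for $q=1$ it yields $\frac14$.

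Step 3, which I expect to be the main obstacle, excludes the exponentially decaying alternative. For that alternative I would show that it forces $u(x,T)$ to be too large near $0$, contradicting radial decrease together with the blow-up set $\{0\}$. Alternatively, I would use a Vel\'azquez-type argument via intersection comparison, or use the sign of $u_t\ge0$ / the quantitative type~I bound. The difficulty is that the $o(s^{-\alpha_0})$ errors from the nonlinearity must be shown smaller than the exponential modes. Finally, parabolic regularity upgrades $L^2_\rho$ convergence to $H^1_\rho$ and to uniform convergence on bounded $y$, giving \eqref{A1} and \eqref{A1-2}.
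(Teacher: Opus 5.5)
Your overall architecture matches the paper. The unknown $w=\Psi\big((T-t)/G(u)\big)$ removes the gradient term exactly when $f'G\equiv q$. A weighted-energy argument then gives the Giga--Kohn limit $\kappa$, and a center-manifold analysis around $\kappa$ with neutral mode $|y|^2-2n$ gives the $1/s$ correction.

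The genuine gap is your Step~3: the fast-decay alternative is never excluded, and none of the options you list can exclude it.
Type~I information cannot do it, since exponentially decaying (flat) behaviors are compatible with type~I blow-up already in the power case. The condition $u_t\ge0$ is not available under the alternative hypothesis $q>q_S$. Radial decrease plus single-point blow-up gives no \emph{quantitative} bound on $u(\cdot,T)$ that a more singular flat profile would violate. Besides, converting $L^2_\rho$-exponential decay into a statement on $u(\cdot,T)$ would need an outer matching analysis.

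The missing ingredient is the Friedman--McLeod-type lower gradient bound \eqref{Rappelupper},
\[
-u_r\ge \frac{r f(u)}{2(A+\log f(u))}\quad\hbox{where } u\ge M,
\]
valid for radially nonincreasing solutions by \cite[Theorem~3.1]{CS25}. This is where the convexity of $f/\log f$ in \eqref{qlim} enters; your proposal never uses that hypothesis. The bound translates as follows:
\begin{itemize}
\item $-\partial_\eta w=\sqrt{T-t}\,(-u_r)/(f(u)G(u))$, times $(q-1)w$ if $q>1$.
\item On $|y|\le K$ one has $(T-t)/G(u)\to1$, and $\log f(u)\le Cs$ by \eqref{propFf}.
\item Hence $-\partial_\eta w\ge c\eta/s$ there, with $c$ independent of $K$.
\item An elementary two-case argument on $[0,K]$, according to the sign of $\kappa-w(1,s)+1/s$, yields $\|\kappa-w(\cdot,s)\|\ge c/s$. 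This is Lemma~\ref{lem}.
\end{itemize}
That lower bound is exactly the hypothesis that lets Proposition~\ref{TheoremBreduit} select the neutral-mode behavior. Your first option could only be made rigorous by quantifying radial monotonicity through this estimate (or its integrated form \eqref{upperTheoL}). At that point the detour through $u(\cdot,T)$ is unnecessary.

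There are also some smaller inaccuracies.
\begin{itemize}
\item Your transformation kills the $|\nabla w|^2/w$ term up to the factor $f'(u)G(u)-q$. So the quadratic coefficient in the neutral-mode equation comes from the reaction term, not from a gradient nonlinearity. In your normalization it is the second derivative at the equilibrium of $(q-1)(w^{q/(q-1)}-w)$, which equals $q/(q-1)$; for $q=1$ it is that of $e^w-1$.
\item The condition $\alpha_0>1/2$ is not what makes the error harmless in the modal equations. There the paper only needs $R=H_0$ to be small \emph{relative} to $\int|\nabla\varphi|^2\rho$ (Lemma~\ref{controlRL}). Your bound $|\nabla w|^2\sim s^{-2}$ presupposes the conclusion.
\item The condition $\alpha_0>1/2$ is actually used in the Giga--Kohn step. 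With $\|H_0\|_\infty\le Cs^{-\alpha_0}$, which needs the no-needle Lemma~\ref{noneedle} to keep $u\gtrsim G^{-1}(T-t)$ on $|y|\le R_0$, one has $\int^\infty s^{-2\alpha_0}ds<\infty$. This makes $E[w]+C_1s^{-(2\alpha_0-1)}$ a Lyapunov functional.
\item In Step~1, the role of $u_t\ge0$ (or of $q>q_S$, via Pohozaev) is to force the stationary limit to be \emph{constant}. Nondegeneracy, $\phi(0)\ge\kappa$, comes instead from the ODE comparison \eqref{belowrate}.
\end{itemize}
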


\section{Discussion and  outline of proofs}

\subsection{ Discussion}\label{L11}

\begin{remark} 
$(i)$ {\bf(Alternative type I assumption)}
For $n\le 2$, Theorems \ref{Thm}-\ref{theo} remain valid 
under the type I blow-up assumption, namely,
 \begin{equation}\label{dam1a}
	\|u(\cdot,t)\|_\infty\le G^{-1}(C(T-t)),\quad T-\delta<t<T,
	\end{equation}
for some $C,\delta>0$.
In any dimension, property \eqref{dam1a} is in particular guaranteed if $u_t\ge 0$
(see \cite[Lemma~4.7]{C25-2}, which follows from arguments in \cite[Section 4]{FM85}).
However, in higher dimensions, the conclusions of Theorems \ref{Thm}-\ref{theo}
are not true in general under the weaker assumption \eqref{dam1a}.
In \cite{souplet2022universal}, it proved that \eqref{dam1a} holds for $q>p_S/(p_S-1)$ for any positive solution of \eqref{SHE}. This was extended in \cite{FK25} to the case of $q=1$ and $3\le n\le 9$ or $q_{JL}<q<q_S$ and $n\ge3$  in the radial setting, where 
$$
q_{JL}=\begin{cases} 1\quad\hbox{if }\ n\le 10,\\
\frac{n-2\sqrt{n-1}}{4}\quad\hbox{if } n\ge 11.
\end{cases}
$$

$(ii)$ {\bf(Bounded domain)} For simplicity, we have restricted our study to the case $\Omega=\mathbb{R}^n$. However, the same arguments apply, with minor modifications, to the case $\Omega=B_R$, following the approach in \cite{C25-2,CS25}.

\end{remark}

\begin{remark} \label{rem1}
 {\bf Structural universality of the profile.}
 As in \cite{C25-2,CS25}, our main result recovers  
 the universal structure of the global blow-up profile. More precisely, the profile is obtained by applying the ``resolvent'' $G^{-1}$ of the associated ODE to the universal time-space building block
\begin{equation}\label{block}
T-t+\frac{|x|^2}{4q|\log|x|^2|}.
\end{equation}
It turns out that
 the influence of $f$ on the building block appears only at the coefficient of the logarithmic spatial correction which is governed by 
 \be{checkq}
 \lim_{X\to\infty}\Big(f'(X)\int_X^\infty\frac{ds}{f(s)}\Big)=q.
 \ee
  The same phenomenon also holds for the final and refined space-time profiles, as well as for the sharp asymptotics in backward space-time parabolas (cf.~\eqref{fp}, \eqref{lou} and \eqref{A1}).
\end{remark}

\begin{example}{\bf(Examples of nonlinearities)}\label{exampleofL}
Our results apply in particular to the fast growing nonlinearities $f(s)=exp_m(u^p)$, with $m,p\ge 1$,
where $\exp_0:=id$ and $\exp_m(\cdot)=\exp(\exp_{m-1}(\cdot))$, and they are already completely new in this case. More generally, typical examples of nonlinearities fulfilling assumptions \eqref{sem0}-\eqref{qlim},
so that Theorems~\ref{Thm}-\ref{theo} apply,
 are given by~ 
\be{examplesf} 
 f(u)=\exp_m(u^p)L(\exp_m(u^p))\quad\hbox{  for }\ m\in\{0,1,2,\cdots\}\quad\hbox{ and }\  \begin{cases}p>1\quad \hbox{if  $m=0$},\\
 p\ge1\quad \hbox{if  $m\in \N$}\end{cases}
\ee 
  where $L\in C^1([0,\infty))$ is a slow variation function satisfying 
 \be{semC-CS}
 \frac{XL'(X)}{L(X)}=o(\log^{-\alpha_0} X)\quad\hbox{ and }\quad \frac{X^2L''(X)}{L(X)}=o(1)\quad\hbox{as }\ X\to \infty.
\ee
Such $L$ include logarithms and their powers and iterates, as well as some strongly oscillating functions. 
  We here provide a few examples of such function $L$:
$$
 	\left\{\begin{aligned}
 		&\hbox{\ $\star$\  $\log^a (K+s)$ for $K>1$ and $a\in\R$ or $K=1$ and $a\ge1$,} \\
 		\noalign{\vskip 1mm}
 		&\hbox{\ $\star$\  the iterated logarithms $\log_m(K+s)$, with $K=\exp\circ\exp\circ\cdots\circ \exp(0)$,}  \\
 		\noalign{\vskip 1mm}
 		&\hbox{\ $\star$\  $\exp(|\log s|^\nu)$ with $\nu\in (0,1/2)$,} \\
 		\noalign{\vskip 1mm}
 		&\hbox{\ $\star$\  the strongly oscillating functions} \\
 		&\hbox{\qquad $\bigl[\log(3+s)\bigr]^{\sin[\log\log(3+s)]}
 			\quad\hbox{ and }\quad
 			\exp\bigl[|\log s|^\nu\cos(|\log s|^\gamma)\bigr],\ \ \nu, \gamma>0,\ \nu+\gamma<1/2$,} \\
 		\noalign{\vskip 1mm}
 		&\hbox{\ $\star$\  $1 +a \sin\bigl(\log^\nu(2+s)\bigr)$ with $ \nu\in (0,1/2)$ and $|a|<1$.}
 	\end{aligned}
 	\right.
 	$$
 	We observe that the factor $L$ does not affect the limit $q$ in \eqref{qlim}. 
Indeed, a sufficient condition for the limit $q$ in \eqref{checkq} to exist is that
$$
\lim_{X\infty}\frac{f'^2}{ff''}=q.
$$
\end{example}
\begin{remark}{\bf (Known results on the blow-up profile)}
$(i)$ Regarding \eqref{examplesf}, we see that 
$$
\begin{cases}
q=1\quad\hbox{if }\quad m\in\N \quad \hbox{and }\ p\ge1\\
q=\frac{p}{p-1}\quad\hbox{if }\quad m=0 \quad \hbox{and }\ p>1.\\
\end{cases}
$$

Our results cover all the nonlinearities considered in \cite{C25-2} and \cite{CS25}, namely $e^uL(e^u)$ and $u^pL(u)$ with $p>1$, respectively. We obtain the same results under weaker assumptions on $f$. Indeed, they obtained the sharp global blow-up profile given in \eqref{infer1} under the  assumptions \eqref{semC-CS} and 
$$
\Big(\frac{XL'(X)}{L(X)}\Big)'=o\Big(\frac{1}{X\log X}\Big) \quad \hbox{as }\quad X\to\infty.
$$
This assumption is used in their proofs only to control the linear term $\tau(s)$ in the rescaled equation; see \cite[Formula~(6.14)]{C25-2} and \cite[Formula~(6.27)]{CS25}. This term disappears in our rescaled equation, thanks to the transformation \eqref{dam11} below. Consequently, we are able to obtain the same sharp blow-up profile without requiring this additional assumption involving $L''$.

$(ii)$ Choosing $m=1$ and $p>1$ in \eqref{examplesf}, $a=q/p$ and $L(s)=\log^a(K+s)$, with either $K>0$ and $a=0$, or $K=0$ and $a\ge1/p$, we obtain
\be{ryoequ}
f(u)=u^q e^{u^p},
\ee
where $q\in\{0\}\cup[1,\infty)$. Thus, our assumptions cover the entire class of nonlinearities considered in \cite{Ryoto}. In that case, we have 
$$
G^{-1}(Y)=(1+o(1))|\log Y|^{1/p}\quad\hbox{as}\quad Y\to 0^+.
$$
Combining this with \eqref{infer1}, we obtain the explicit blow-up asymptotics of $u$, which takes the form
$$
u(x,t)= (1+o(1))\Big|\log\bigg(T-t+\frac{|x|^2}{4|\log|x|^2|}\bigg)\Big|^{1/p}\quad \text{as } (x,t)\to (0,T).
$$
\end{remark}

\begin{remark}\label{CSLem} {\bf (On the convexity assumption)}
Let $f$ satisfy \eqref{sem0}-\eqref{defF} and that 
$$
\lim_{X\to \infty} f'(X)G(X)=q>1.
$$
Then $f$ has regular variation at $\infty$ with index $p=\frac{q}{q-1}$. This follows by showing that $L(X)=X^{-p}f(X)$ satisfies
$$
\lim_{X\to \infty} \frac{XL'(X)}{L(X)}=0.
$$
 Moreover, if $f$ is a $ C^2$ function at $\infty$, it was proved in \cite[Page~4526]{CS25} that $\frac{f}{\log f}$ is convex for large $X$, assuming
$$
\lim_{X\to \infty} \frac{X^2L''(X)}{L(X)}=0.
$$

\end{remark}

\subsection{Outline of proofs}

A first main ingredient of the proof is the following analogue of the Giga-Kohn property, whose proof follows the argument of \cite[Proposition~3.3]{C25-2} and \cite[Theorem~2.1]{C25-1}.

\begin{prop}\label{qssblowup}
Let $p > 1$.
 Assume \eqref{sem0}-\eqref{defF}, \eqref{qlim}, \eqref{Omega-u0} and $T=T(u_0)<\infty$. We also assume that $u$ is of type~I. For any $t_j\to T$, there exist a subsequence (still denoted $t_j$) and a nonnegative radially decreasing function $\phi$ solving
$$
\Delta\phi-\frac12 y\cdot\nabla\phi=\begin{cases}1-e^\phi\quad\hbox{if }\ q=1,\\ (q-1)\phi-\phi^{q/(q-1)}\quad\hbox{if }\ q>1,\end{cases}
\quad\text{in }\mathbb{R}^n,
$$
 such that
\begin{equation}\label{eq:qssblowup}
	\phi(|y|)=\lim_{t_j \rightarrow T} \begin{cases}\log\Big(\frac{(T-t_j)}{G(u(y\sqrt{T-t_j},t_j))}\Big)\quad\hbox{if }\ q=1,\\ \Big(\frac{(q-1)(T-t_j)}{G(u(y\sqrt{T-t_j},t_j))}\Big)^{q-1} \quad\hbox{if }\ q>1,\end{cases}
\end{equation}
uniformly for $y$ bounded. In particular, under the assumptions of Theorem~\ref{Thm}, \eqref{eq:qssblowup} holds with $\phi\equiv(q-1)^{q-1}$ and $\phi\equiv0$ if $q=1$.
\end{prop}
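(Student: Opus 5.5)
This is a Giga–Kohn type result. The plan is to pass to self-similar variables adapted to the ODE resolvent, derive a limiting elliptic problem via a Lyapunov functional, and then use radial monotonicity together with the known Liouville-type classification of that problem.

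\textbf{Step 1: new unknown.} Set $y=x/\sqrt{T-t}$, $s=-\log(T-t)$, and define
$$
w(y,s)=\log\frac{T-t}{G(u(x,t))}\ \ (q=1),\qquad
w(y,s)=\Big(\frac{(q-1)(T-t)}{G(u(x,t))}\Big)^{q-1}\ \ (q>1).
$$
Using $G'=-1/f$, $u_t-\Delta u=f(u)$ and the gradient term $-G''|\nabla u|^2=-(f'/f^2)|\nabla u|^2$, one computes
$$
w_s=\Delta w-\tfrac12 y\cdot\nabla w+F(w)+R,
$$
where $F$ is the right-hand side in the statement with the sign reversed. This is exactly the reason for the choice of $w$: the quantity $\nabla\log(1/G(u))$ produces a term $(f'G-1)|\nabla\log G|^2$. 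The identity $f'G=q+o(1)$ makes this quadratic gradient term cancel after taking the power $q-1$ (respectively the logarithm when $q=1$), up to an error $R$. This error is controlled by $|f'G-q|\,|\nabla w|^2$ and is $o(1)$ uniformly as $s\to\infty$ on bounded $w$-ranges. The convexity of $f/\log f$ gives the sign and regularity needed to bound $f'G$ and related quantities uniformly for large $u$.

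\textbf{Step 2: a priori bounds.} The type~I assumption gives $G(u)\ge (T-t)/C$, hence $w$ is bounded above. The lower bound \eqref{belowrate}, combined with radial monotonicity, gives $w(0,s)\ge \log 1=0$ (respectively $\ge (q-1)^{q-1}$ at $y=0$, after normalisation). Parabolic regularity, using the boundedness of $w$ and interior estimates for $\nabla w$ applied to the quasilinear equation, then yields uniform $C^{2+\alpha,1+\alpha/2}_{loc}$ bounds on $w$ for $s\ge s_0$.

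\textbf{Step 3: convergence along subsequences.} I would use a Lyapunov functional
$$
E(w)=\int\Big(\tfrac12|\nabla w|^2-\Phi(w)\Big)\rho,\qquad \Phi'=F.
$$
Along the flow it satisfies $\frac{d}{ds}E\le-\int w_s^2\rho+\int |R|\,|w_s|\rho$, so $\int_{s}^{s+1}\int w_s^2\rho\to0$ provided $\int_{s_0}^\infty\int|R|^2\rho$ is finite. This integrability is the point where the rate $o(|\log G|^{-\alpha_0})=o(s^{-\alpha_0})$ is used. Since $\alpha_0>1/2$, we get $\int^\infty s^{-2\alpha_0}\,ds<\infty$, which is enough. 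Extracting from $s_j=-\log(T-t_j)$ and the compactness of Step 2, $w(\cdot,s_j+\cdot)$ converges locally uniformly to a stationary solution $\phi$. Radial symmetry, nonnegativity and monotonicity in $|y|$ pass to the limit.

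\textbf{Step 4: identification of $\phi$.} Under the extra assumptions of Theorem~\ref{Thm} ($u_t\ge0$, or $q>q_S$, or $n\le2$), I would invoke the known classification of bounded radial solutions of these stationary problems: the Giga–Kohn/Bebernes–Eberly type Liouville results in the exponential case and for $p=q/(q-1)<p_S$. This classification shows that $\phi$ is constant, i.e. $\phi$ is a zero of $F$. The zero $\phi\equiv0$ for $q>1$, corresponding to $u$ bounded near $0$, is excluded by the lower bound from \eqref{belowrate} at $y=0$. This leaves $\phi\equiv0$ if $q=1$ and $\phi\equiv(q-1)^{q-1}$ if $q>1$. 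The case $u_t\ge0$ forces $w_s$ to have a sign in the limit, which helps when $p\ge p_S$.

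\textbf{Main obstacle.} The main obstacle is Step 1/3: showing that the remainder $R$, which involves $(f'G-q)|\nabla w|^2$ together with derivatives hidden in the change of variables, is small enough in weighted $L^2_\rho$ to be integrable in $s$. I would first try to bound $|\nabla w|$ uniformly using a Bernstein-type argument based on the convexity hypothesis in \eqref{qlim}.
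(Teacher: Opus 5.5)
Your Steps 1--3 follow the paper's route: the same unknown $w$, the same energy corrected by $C_1s^{-(2\alpha_0-1)}$ to absorb $\int H_0^2\rho\le Cs^{-2\alpha_0}$, compactness, and a stationary limit. Step~4, however, has a genuine gap.

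Under the hypotheses of Theorem~\ref{Thm}, two cases rest entirely on $u_t\ge0$: the case $q=1$ (where $q>q_S$ never holds), and the case $1<q\le q_S$, i.e.\ $p=q/(q-1)\ge p_S$. In these cases the limit equation by itself admits no Liouville theorem. For $q=1$ it is exactly the equation for $f=e^u$, and for $3\le n\le 9$ that equation should have nonconstant radial decreasing solutions, bounded above and behaving like $-2\log|y|$ at infinity (nonconstant self-similar profiles of this kind are known to occur, cf.\ \cite{FP08}). Analogous nonconstant solutions are also known in part of the supercritical power range. So no classification that uses only the information from your Steps~2--3 can give constancy.

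Moreover, $u_t\ge0$ does not give a sign on $w_s$, and such a sign would carry no information about a stationary limit anyway. With $x=ye^{-s/2}$, $t=T-e^{-s}$ one finds, for $q=1$,
\[
w_s+\tfrac12\,y\cdot\nabla w+1=\frac{(T-t)\,u_t}{f(u)G(u)}\ge 0,
\]
and $w_s+\frac12\,y\cdot\nabla w+(q-1)w\ge0$ for $q>1$. What passes to the limit is the constraint $1+\frac12\,y\cdot\nabla\phi\ge0$, resp.\ $(q-1)\phi+\frac12\,y\cdot\nabla\phi\ge0$ (the Giga--Kohn condition $\frac{1}{p-1}\phi+\frac12\,y\cdot\nabla\phi\ge0$). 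The paper uses this inequality, together with $\phi'\le0$ and $\phi(0)\ge\kappa$ ($\kappa=(q-1)^{q-1}$, $\kappa=0$ if $q=1$), in an ODE-type Liouville argument (\cite[Lemma~4.6]{C25-2} for $q=1$, Lemma~\ref{lim} for $q>1$) to conclude $\phi\equiv\kappa$. Without deriving this constraint and a rigidity statement under it, your Step~4 only covers $q>q_S$, plus $q=1$ with $n\le2$, which is not a hypothesis of Theorem~\ref{Thm}.

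Two further points are glossed over. First, turning $f'(u)G(u)-q=o(|\log G(u)|^{-\alpha_0})$ into $|H_0|\le Cs^{-\alpha_0}$ requires $|\log G(u)|\gtrsim s$, i.e.\ $u$ comparable to $G^{-1}(T-t)$, on the region carrying the Gaussian mass. Radial monotonicity and \eqref{belowrate} give this only at $y=0$, since monotonicity bounds $w$ from above, not from below, away from the origin. The paper gets it from a gradient estimate and the no-needle Lemma~\ref{noneedle}. In your setting it would follow from $|\nabla w|\le M$ via $w(y,s)\ge w(0,s)-M|y|$, which is also what makes $E$ bounded below when $q=1$ (a fact your Step~3 uses tacitly). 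So the gradient bound you flag is genuinely indispensable; the paper imports it from \cite{C25-1,C25-2} rather than proving it. Second, $w$ is only defined where $u$ is large enough for $f>0$ and $G^{-1}$ to make sense. The energy therefore has to be set on a growing domain such as $D(s)=e^{s/2}B_{R_0}$, with boundary terms controlled by $\rho$, rather than on all of $\mathbb{R}^n$.
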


Note that \eqref{eq:qssblowup} is analogous to \cite[Theorem~1.1]{Ryoto}, where \eqref{ryoequ} plays the role of the nonlinearity, and the result is established in dimensions $n\le 2$ with $\phi\equiv0$. For $f(s)=e^s$, corresponding to $m=1,\ p=1$ and $L\equiv1$ in \eqref{examplesf}, Proposition~\ref{qssblowup} was obtained in \cite{BE}.
The proof in \cite{BE} is based on similarity variables and energy arguments inspired by \cite{GK85}, combined with an intersection-comparison analysis
involving the singular steady-state $U^*$ of $\Delta u+e^u=0$.

The key point in the proof of Proposition~\ref{qssblowup} and Theorem~\ref{theo} lies in the following change of variables. Indeed, setting
$$
y=\frac{x}{\sqrt{T-t}}\quad\hbox{and }\ s=-\log(T-t),
$$
and introducing the new function
$$
w(y,s)=\Psi\Big(\frac{T-t}{G(u(x,t))}\Big),\quad\hbox{where }\ \Psi(X)=\begin{cases} \log X\quad \hbox{if }q=1,\\ ((q-1)X)^{q-1}\quad \hbox{if }q>1,\end{cases}
$$
the PDE in \eqref{SHE} becomes 
$$
w_s-\Delta w +\frac{1}{2}y\cdot\nabla w=f_1(w)+ H_0(s,y),
$$
where $f_1$ and $H_0$ are defined in \eqref{VraieEE}-\eqref{VraieEEmod} below.
We note that this change of function is inspired by \cite{FI18} and was used in \cite{FI22,FI23,M18} in a different way.

To prove Proposition~\ref{qssblowup}, following the approach in \cite{C25-1,C25-2} based on energy functional, it suffices to have 
$$
\|H_0(s,\cdot)\|_\infty\le Cs^{-\alpha}\quad \hbox{for some }\ \alpha>\frac12.
$$
It turns out that the latter estimate is guaranteed by our assumption \eqref{qlim} together with the no-needle property established in Lemma~\ref{noneedle}.

Building on Proposition~\ref{qssblowup}, the proof of Theorem~\ref{theo} relies on a simplified center-manifold-type analysis similar to that developed in \cite[Section~6]{CS25}. Namely, instead of employing the ODE renormalization as in \cite{CS25,C25-2}, we first rescale the solution using the change of variables introduced above. We then linearize around the constant solution $\kappa=(q-1)^{q-1}$, with $\kappa=0$ when $q=1$. Our analysis shows that the perturbation $H_0$ does not affect the control of the resulting linearized equation. Consequently, the arguments in \cite[Section~6]{CS25} remain valid in the present setting, allowing us to conclude the proof.
A similar approach was employed in \cite{C25-2} to derive an analogous result in the case $m=1$ and $p=1$ in \eqref{examplesf}. We note that the method developed in \cite[Section~6]{CS25} (the case of $m=0$ and $p>1$ in \eqref{examplesf}) follows the simplified version of the classical center manifold analysis for radially decreasing solutions developed in \cite{S19}, which was originally introduced in \cite{filippas1992refined}; see also \cite{HV93, Vel93b, bebernes1992final,WXL}.

With Theorem~\ref{theo} at hand, we derive the lower bound in \eqref{infer1} following an argument similar to that in \cite[Section~5]{CS25}, suitably adapted to the present context. 
The main idea of proof is, 
for $t_0$ suitably close to $T$, to use the lower estimate on $u(t_0)$ provided by Theorem~\ref{theo}
in the region $|x|=O(\sqrt{T-t_0})$ and propagate it forward in time via a delicate comparison argument.

Regarding the upper bound in \eqref{infer1}, it follows directly from \cite[Theorem~3.1]{CS25}, combined with Proposition~\ref{qssblowup} and the properties of $f$. It therefore remains to verify that the assumptions of \cite[Theorem~3.1]{CS25} are satisfied by the nonlinearities considered in Theorem~\ref{Thm}, and that the resulting estimates can be recast in the form required by Theorem~\ref{Thm}.

\section{Preliminaries}
In this section, we establish several properties of $f$, $F$ and there inverses that will be used in the next section.
	We first observe that, under assumption \eqref{sem0}-\eqref{defF}  and \eqref{qlim}, there exists $M>0$ such that 
		 \be{fposM}
		 f\in C^2([M,\infty))\quad\hbox{and}\quad f,f',f''>0\ \hbox{ on $[M,\infty)$}
		\ee 
		and $G:[M,\infty)\to(0,G(M)]$ is decreasing and $C^2$.

\begin{lemma}\label{lemmaFH}
Let $f$ and $G$ be as in \eqref{sem0}-\eqref{defF} with
$$
 f'(X)G(X)\to q\ge 1\quad\hbox{as }\ X\to \infty.
$$
We also define 
\be{Hqlim}
H(X):=\int_X^\infty \frac{A+\log f(s)}{f(s)}ds\quad\hbox{for large $X>0$ and some $A>0$}.
\ee
$(i)$ As $X\to \infty$, we have 
\be{propFf}
|\log G(X)|\sim \frac1q\log f(X)
\ee
and 
\be{propFH2}
H(X)\sim qG(X)|\log G(X)|.
\ee

\noindent$(ii)$ Moreover, as $Y\to 0^+$, we have 
\be{propinversFH2}
H^{-1}(Y)\sim G^{-1}\Big(\frac{Y}{q|\log Y|}\Big).
\ee

\end{lemma}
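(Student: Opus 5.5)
The plan is to work with the logarithmic derivative of $G$ and integrate it via L'Hôpital-type arguments.

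\textbf{Step 1: proof of \eqref{propFf}.} Since $G'=-1/f$, we have
$$\frac{d}{dX}\log G(X)=-\frac{1}{f(X)G(X)}.$$
The hypothesis $f'G\to q$ then gives
$$\frac{d}{dX}\log f(X)=\frac{f'}{f}=f'G\cdot\frac{1}{fG}=(q+o(1))\Bigl(-\frac{d}{dX}\log G(X)\Bigr).$$
Both $\log f(X)\to\infty$ and $|\log G(X)|\to\infty$ as $X\to\infty$, with derivatives that are eventually positive. Integrating from a large $X_0$ to $X$ (equivalently, applying L'Hôpital's rule to the ratio $\log f/|\log G|$) yields $\log f(X)\sim q|\log G(X)|$, which is \eqref{propFf}.

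\textbf{Step 2: proof of \eqref{propFH2}.} I would apply L'Hôpital's rule to $H(X)/(G(X)|\log G(X)|)$; both numerator and denominator tend to $0$. The derivative of the numerator is $-(A+\log f)/f$. For the denominator, $\frac{d}{dX}\bigl(-G\log G\bigr)=-G'(\log G+1)=(\log G+1)/f$. The ratio of derivatives is therefore
$$\frac{A+\log f(X)}{|\log G(X)|-1}\longrightarrow q,$$
by Step 1. This gives $H\sim qG|\log G|$.

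\textbf{Step 3: proof of \eqref{propinversFH2}.} Set $X=H^{-1}(Y)$, so $Y=H(X)\sim qG(X)|\log G(X)|$. Taking logarithms gives $\log Y=\log G(X)+O(\log|\log G|)$, hence $|\log Y|\sim|\log G(X)|$. Therefore
$$G(X)\sim \frac{Y}{q|\log G(X)|}\sim \frac{Y}{q|\log Y|}=:Z,$$
that is, $G(X)=(1+\varepsilon)Z$ with $\varepsilon\to0$.

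\textbf{Main obstacle.} The difficulty is passing from $G(X)\sim Z$ to $X\sim G^{-1}(Z)$, because $G^{-1}$ need not be regularly varying (for instance $G^{-1}(Y)\approx|\log Y|^{1/p}$ in the super-exponential case, or a power when $q>1$). I would use the mean value theorem together with the identity $(G^{-1})'(Z)=-f(G^{-1}(Z))$:
$$|X-G^{-1}(Z)|\le \varepsilon Z\sup_{Z'\in[Z,(1+\varepsilon)Z]}f(G^{-1}(Z')).$$
Evaluating at $Z'$, we have $f(G^{-1}(Z'))\,Z'=fG\le C/f'$ at the point $G^{-1}(Z')$, by the hypothesis $f'G\to q$. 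Hence
$$|X-G^{-1}(Z)|\lesssim \frac{\varepsilon}{f'(G^{-1}(Z))}.$$
By \eqref{fposM}, $f'$ is increasing on $[M,\infty)$, so $f'(G^{-1}(Z))\ge f'(M)>0$ and the right-hand side is $O(\varepsilon)$. This is $o(G^{-1}(Z))$ since $G^{-1}(Z)\to\infty$, which gives $X\sim G^{-1}(Z)$. The point is that the sup over $Z'\in[Z,(1+\varepsilon)Z]$ only involves values $G^{-1}(Z')\ge G^{-1}((1+\varepsilon)Z)$, which lie in the region where \eqref{fposM} applies. So this step works regardless of how fast $f$ grows: the additive error is $O(\varepsilon)$, which is much smaller than $X$.
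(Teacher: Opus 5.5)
Your Steps 1--3 coincide with the paper's argument: L'H\^opital for \eqref{propFf} and \eqref{propFH2}, then $G(H^{-1}(Y))\sim Y/(q|\log Y|)$.

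\textbf{The gap is in your final step.} The inequality $f(W)G(W)\le C/f'(W)$ is false. The hypothesis $f'G\to q$ gives $G(W)\approx q/f'(W)$, hence $f(W)G(W)=(q+o(1))\,f(W)/f'(W)$. This is unbounded in general, so the bound $f'\ge f'(M)>0$ gives you nothing. Consequently your claim that the additive error is $O(\varepsilon)$ is wrong. For $f(u)=u^p$ one has $G^{-1}(Y)=((p-1)Y)^{-1/(p-1)}$, so
\[
G^{-1}((1+\varepsilon)Z)-G^{-1}(Z)\approx -\tfrac{\varepsilon}{p-1}\,G^{-1}(Z),
\]
which is unbounded. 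The same failure occurs with $q=1$, e.g.\ $f(u)=e^{\sqrt u}$ at infinity, where $fG\sim 2\sqrt X$. The danger is slow growth of $f$, not fast growth, so the closing sentence of your proposal has it backwards.

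\textbf{The missing ingredient is a relative bound $f(W)G(W)\le CW$.} It follows from the hypothesis by one more application of L'H\^opital, with denominator $X\to\infty$:
\[
\lim_{X\to\infty}\frac{f(X)G(X)}{X}=\lim_{X\to\infty}(fG)'(X)=\lim_{X\to\infty}\bigl(f'(X)G(X)-1\bigr)=q-1 .
\]
With this, your mean value estimate becomes $|X-G^{-1}(Z)|\le C|\varepsilon|\max\{X,G^{-1}(Z)\}$, hence $X\sim G^{-1}(Z)$. Equivalently, $|(\log G)'|=1/(fG)\ge c/X$ yields $|\log(X/G^{-1}(Z))|\le C|\log(1+\varepsilon)|$.

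\textbf{Comparison with the paper's route.} Once repaired, your argument genuinely differs from the paper's. The paper closes this step by invoking \eqref{equivGG}, which is proved in Lemma~\ref{lem1-1} from \eqref{ell12a2}. That in turn rests on the convexity of $f/\log f$ from \eqref{qlim}, an assumption not listed in the statement of the lemma. Your corrected argument uses only $f'G\to q$, so it proves (ii) under the lemma's stated hypotheses. It also removes the need for your appeal to \eqref{fposM}.
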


\begin{proof}[Proof of Lemma~\ref{lemmaFH}]
$(i)$ The proof of \eqref{propFf} and \eqref{propFH2} is based on the L'H\^{o}pital's rule. Indeed, using $f(s)\to \infty$ and $G(X)\to 0$ as $X\to \infty$, we have
$$
\begin{aligned}
\lim_{X\to \infty}\frac{\log G(X)}{\log f(X)}&=\lim_{X\to \infty}\frac{\frac{-1 }{f(X)G(X)}}{\frac{f'(X)}{f(X)}}=\lim_{X\to \infty}\frac{-1}{ f'(X)G(X)}=\frac{-1}{q}.
\end{aligned}
$$
Which yields the proof of \eqref{propFf}. Building on the latter, we derive \eqref{propFH2} by the same way. Namely, we compute:
$$
\begin{aligned}
\lim_{X\to \infty}\frac{G(X)|\log G(X)|}{H(X)}&=\lim_{X\to \infty}\frac{\frac{\log G(X)}{f(X)}+\frac{1}{f(X)}}{-\frac{\log f(X)}{f(X)}}=\lim_{X\to \infty}\Big(\frac{-\log G(X)}{\log f(X)}-\frac{1}{\log f(X)}\Big)=\frac{1}{q}.
\end{aligned}
$$

\noindent$(ii)$ We shall derive \eqref{propinversFH2} from \eqref{propFf}-\eqref{propFH2}. 
By \eqref{propFH2}, we first note that $\log H(X)\sim\log G(X)$ at $\infty$, and then 
$$
H(X)\sim qG(X)|\log H(X)|.
$$
Thus, writing $Y:=H(X)$, we get 
$$
G(H^{-1}(Y))\sim\frac{Y}{q|\log Y|}.
$$
Which implies that 
$$
H^{-1}(Y)=G^{-1}(G(H^{-1}(Y)))=G^{-1}\Big((1+o(1))\frac{Y}{q|\log Y|}\Big).
$$
This along with \eqref{equivGG} below (which is independant of \eqref{propinversFH2}) concludes the proof.
\end{proof}

\begin{lemma}\label{ell12a}
	Let $f$ satisfy the assumptions \eqref{sem0}-\eqref{qlim} and $F$ be defined by \eqref{defF}.
	\smallskip
	 There exist $C_0,C_1,X_1>0$ such that, if $0\le \eps<1/2$ then for all $X\ge X_1$, we have 
	
		\be{ell12a1}
f((1-\eps))X)\ge e^{-C_0\eps}f(X)	
		\ee
		and
		\be{ell12a2}
	G((1-\eps )X)\le (1+C_1\eps)G(X).
	\ee
\end{lemma}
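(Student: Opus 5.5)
The plan is to reduce both inequalities to a single quantitative bound on the logarithmic derivative of $f$ at large arguments. Namely, I aim to show that there exist $X_1\ge 2M$ (with $M$ from \eqref{fposM}) and $K>0$ such that
$$
\frac{s f'(s)}{f(s)}\le K\qquad\hbox{for all } s\ge X_1/2. \qquad (\ast)
$$

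Assuming $(\ast)$, \eqref{ell12a1} follows from the mean value theorem applied to $\log f$ on $[(1-\eps)X,X]$. For $0\le\eps<1/2$ and $X\ge X_1$, the interval lies in $[X_1/2,\infty)$, where $f,f'>0$ by \eqref{fposM}. Hence, for some $\xi\in[(1-\eps)X,X]$,
$$
\log f(X)-\log f((1-\eps)X)=\eps X\,\frac{f'(\xi)}{f(\xi)}\le \eps\,\frac{X}{\xi}\,\frac{\xi f'(\xi)}{f(\xi)}\le 2K\eps ,
$$
since $X/\xi\le 1/(1-\eps)\le 2$. This gives \eqref{ell12a1} with $C_0=2K$.

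For \eqref{ell12a2}, I write
$$
G((1-\eps)X)=G(X)+\int_{(1-\eps)X}^X\frac{ds}{f(s)}.
$$
Since $f$ is increasing on $[M,\infty)$, the integral is at most $\eps X/f((1-\eps)X)$. By \eqref{ell12a1} this is at most $e^{C_0/2}\,\eps X/f(X)$. It then remains to bound $X/(f(X)G(X))$. Writing
$$
\frac{X}{f(X)G(X)}=\frac{Xf'(X)}{f(X)}\cdot\frac{1}{f'(X)G(X)},
$$
and using $f'(X)G(X)\to q\ge1$ from \eqref{qlim}, we get $f'G\ge 1/2$ for large $X$. Combined with $(\ast)$ this yields
$$
\frac{X}{f(X)G(X)}\le 2K,
$$
hence \eqref{ell12a2} with $C_1=2Ke^{C_0/2}$.

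So everything rests on $(\ast)$, which I expect to be the main obstacle. In the case $q>1$, Remark~\ref{CSLem} gives regular variation of $f$ with index $p=q/(q-1)$. Writing $f=X^pL$, we have $Xf'/f=p+XL'/L\to p$, so $(\ast)$ holds.

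In the case $q=1$ I would first try to extract $(\ast)$ from \eqref{qlim} combined with the convexity of $f/\log f$. The idea is to integrate $(\log f)'=f'/f$ against the identity $f'G=1+o(|\log G|^{-\alpha_0})$, so as to compare $\log f(X)$ with $\log f(X/2)$. One must be careful here. For genuinely superexponential $f$ (for instance $f=\exp(u^p)$, where $Xf'/f=pX^p$), $(\ast)$ cannot hold with a constant $K$ independent of $X$. In that regime the argument above only closes if $\eps$ is restricted to $\eps\lesssim f(X)/(Xf'(X))$. This is the form in which the estimate is actually needed later: with $\eps$ of the size of the relative errors $o(1)$ produced by the rescaled variable $w$ in the no-needle and comparison steps.

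Thus the step I would check first, before anything else, is whether $(\ast)$ is available for the given $f$. If it is not, the step to settle is whether the later uses of the lemma tolerate replacing $\eps X$ by $\eps f(X)/f'(X)$. If they do, the mean value computations above go through verbatim with that increment, with constants $C_0=1$ (up to the factor from comparing $f'/f$ at $\xi$ and at $X$, controlled by the convexity of $f/\log f$) and $C_1\sim 1/q$.
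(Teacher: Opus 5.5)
Your treatment of $q>1$ is correct and self-contained. With $\phi:=fG$ one has $\phi'=f'G-1\to q-1>0$, so $\phi(X)\sim(q-1)X$ and $Xf'/f=Xf'G/\phi\to q/(q-1)$. This is $(\ast)$, and your mean-value estimates then give \eqref{ell12a1} and \eqref{ell12a2}. The paper argues differently. It defers $q>1$ to the literature (via Remark~\ref{CSLem}) and then gives a proof meant for all $q$, based on the convexity of $h=f/\log f$ and the inequality $h(X/2)\le h(X)-\frac{X}{2}h'(X)$. That inequality is reversed: convexity puts the tangent line at $X$ below the graph, so $h(X/2)\ge h(X)-\frac{X}{2}h'(X)$, which only bounds $h'$ from below. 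Moreover, the paper's resulting bound $f'/f\le 4/X$ would integrate to $f(X)\le CX^4$. This already fails for the admissible nonlinearity $f(X)=X^5$, for which $q=5/4$. So for $q>1$ your argument is a valid replacement for the paper's displayed proof, which does not work.

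For $q=1$ the gap you left open cannot be filled, because the statement is false there. You should say so with a counterexample rather than leaving it as something ``to check first''. Take the pure exponential $f(u)=e^u$, which the paper itself lists among its examples. It satisfies \eqref{sem0}--\eqref{qlim} with $q=1$, since $f'G\equiv1$ and $(e^u/u)''=e^u(u^2-2u+2)/u^3>0$. Yet $f((1-\eps)X)=e^{-\eps X}f(X)$ and $G((1-\eps)X)=e^{\eps X}G(X)$, so for fixed $\eps\in(0,1/2)$ neither inequality can hold uniformly in $X\ge X_1$. Note that $(\ast)$ already fails here, not only for superexponential $f$.

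Your proposed repair is the right one, and it is cleanest at the scale $\phi(X)=f(X)G(X)$, which is equivalent to $f/f'$ when $q=1$. Since $\phi'\to0$ and $\phi(X)=o(X)$, one has $\phi=(1+o(1))\phi(X)$ on $[X-\eps\phi(X),X]$. Integrating $(\log G)'=-1/\phi$ and $(\log f)'=f'G/\phi$ over this interval gives $G(X-\eps\phi(X))\le e^{2\eps}G(X)$ and $f(X-\eps\phi(X))\ge e^{-2\eps}f(X)$ for $0\le\eps<1/2$ and $X$ large. This uses only the second half of \eqref{qlim}; contrary to your remark, convexity of $f/\log f$ is not needed. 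Because $\phi(X)=o(X)$, this is strictly weaker than Lemma~\ref{ell12a}. It still yields \eqref{equivGG}. However, the two invocations of Lemma~\ref{ell12a} in Step~3 of the proof of \eqref{infer1-lower} must be redone at this scale when $q=1$.
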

\begin{proof}[Proof of Lemma~\ref{ell12a}] By remark~\ref{CSLem}, we refer to \cite[Lemma~7.1]{CS25} for the proof of this lemma in the case of $q>1$ and also \cite[Lemma~7.2]{C25-2} in the particular case of $m=1$ and $p=1$ in \eqref{examplesf}. We give here a proof using the convexity of $f/\log f$ and $f\to \infty$ at $\infty$.
To this end, let $X_1>0$ be such that 
$$
(f/\log f)''\ge 0,\quad\log f(X)\ge 2,\quad \hbox{and }\ f'\ge 0\quad\forall \ X\ge X_1.
$$ 

 Set $h(X)=f(X)/\log f(X)$. By the first part of \eqref{qlim}, we have $h,h''\ge0$ for $X\ge X_1$, and  hence 
$$
h(X/2)\le h(X)-\frac{X}{2} h'(X).
$$ 
This in particular implies that 
$$
h'(X)\le \frac2X (h(X)-h(X/2))\le \frac{2 h(X)}{X}.
$$
Thus by a simple computation for large $X$, we get 
$$
0\le \frac{f'(X)}{f(X)}=\frac{h'(X)}{h(X)(1-1/\log f(X))}\le \frac{2h'(X)}{h(X)}\le \frac{4}{X}.
$$
We are ready to complete the proof of \eqref{ell12a1} as follow:
$$
\begin{aligned}
\log \frac{f(X)}{f((1-\eps)X)}&=\int_{(1-\eps)X}^X\frac{f'(s)}{f(s)}ds\le 4 \int_{(1-\eps)X}^X\frac{1}{s}ds\\
&\le-4\log(1-\eps)\le 8\eps,
\end{aligned}
$$
where we obtain the last inequality by using $-\log(1-\eps)\le 2\eps$ for any $0<\eps<1/2$. Compose with $x\mapsto\exp(x)$, increasing function, we get
$$
 \frac{f(X)}{f((1-\eps)X)}\le e^{ 8\eps} \quad \hbox{for large }\ X.
$$
This conclude the proof of \eqref{ell12a1}.

By the definition of $F$ as in \eqref{defF}, \eqref{ell12a2} is a direct consequence of \eqref{ell12a1}.

\end{proof}

\begin{lemma}\label{lem1-1}
$(i)$ If a function $\eps$ satisfies $\lim_{Y\to 0^+}\eps(Y)=0$ then
 		\begin{equation}\label{equivGG}
 		\lim_{Y\to 0^+} \frac{G^{-1}\Big((1+\eps(Y))Y\Big)}{G^{-1}(Y)}=1
 			\end{equation}
 		and the similar properties hold for  $H^{-1}$.
 		
 		\smallskip
 		
$(ii)$ For any $\beta>0$, we have 
 		\be{eqen0}
 		\lim_{Y\to 0^+} Y^{\beta}(G^{-1}(Y))^\alpha=0,\quad\hbox{for any } \alpha\in (0,\beta/q).
 		\ee
\end{lemma}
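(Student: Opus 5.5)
For \eqref{equivGG}, the natural route is to write $X=G^{-1}(Y)$ and $X_\eps=G^{-1}((1+\eps(Y))Y)$. Since $G$ is decreasing, it then suffices to show that for every fixed $\delta\in(0,1/2)$ we have $(1-\delta)X\le X_\eps\le (1+\delta)X$ once $Y$ is small. The lower bound $X_\eps\ge (1-\delta)X$ comes directly from \eqref{ell12a2}: it gives $G((1-\delta)X)\le(1+C_1\delta)G(X)$, but this only places $G((1-\delta)X)$ above $(1+\eps)Y$ if we also have the \emph{reverse} quantitative inequality $G((1-\delta)X)\ge(1+c\delta)G(X)$ for some $c>0$. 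So the real task is to show that $G$ strictly contracts under dilation, quantitatively, and symmetrically for $(1+\delta)X$.

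To get this, I would use \eqref{propFf} together with $f'G\to q$. Set $k(X)=XG'(X)/G(X)=-X/(f(X)G(X))$. Since $f'G\to q$, we have $1/(fG)\sim f'/(qf)$, hence $|k(X)|\sim Xf'(X)/(qf(X))$. The convexity argument in the proof of Lemma~\ref{ell12a} gives the upper bound $Xf'/f\le 4$. For a lower bound, observe that $f$ grows at least polynomially: for $q>1$, Remark~\ref{CSLem} gives regular variation with index $p=q/(q-1)$, so $Xf'/f\to p>0$. For $q=1$, $\log f$ is at least superlinear in $\log X$. In the $q=1$ case I would instead argue via convexity of $h=f/\log f$: from $h''\ge0$ we get $h'(X)\ge (h(X)-h(X/2))\cdot 2/X$, and since $h(X/2)\le h(X)/2$ eventually (because $h$ is convex, increasing and unbounded), this yields $Xh'/h\ge1$, hence $Xf'/f\ge 1-o(1)$.

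With these bounds in hand, $|k|$ is bounded between two positive constants. Integrating $(\log G)'=k/X$ then gives $G(\lambda X)/G(X)\in[\lambda^{-c_2},\lambda^{-c_1}]$ for $\lambda$ near $1$. The sandwich argument follows: $G((1\pm\delta)X)$ differs from $G(X)=Y$ by a factor bounded away from $1$, while $(1+\eps)Y/Y\to1$, so $X_\eps\in[(1-\delta)X,(1+\delta)X]$. For $H^{-1}$ the argument is identical, because $XH'/H=-X(A+\log f)/(fH)\sim qXf'/f\cdot(\ldots)$ by \eqref{propFH2} and \eqref{propFf}; concretely, $(A+\log f)/(fH)\sim \log f/(qfG|\log G|)\sim 1/(qfG)\cdot q/ q$, which is of the same order as $|k|$.

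For \eqref{eqen0}, put $X=G^{-1}(Y)$. Then $Y^\beta X^\alpha=\exp(\beta\log G(X)+\alpha\log X)$. By \eqref{propFf}, $\log G(X)=-(1+o(1))\log f(X)/q$, and the lower bound $Xf'/f\ge 1-o(1)$ gives $\log f(X)\ge(1-o(1))\log X$. Therefore the exponent is at most $(\alpha-\beta/q+o(1))\log X\to-\infty$ when $\alpha<\beta/q$, which is the claim. The main obstacle I expect is the lower bound $\liminf Xf'/f\ge1$, needed uniformly in both cases; if the convexity trick is insufficient, I would fall back on the more direct fact that $f'G\to q$ with $G\to0$ forces $f'/f\ge c/(fG)$ together with $fG\le CX$, where the latter follows from $G(X)\le\int_X^{2X}+G(2X)$ and monotonicity.
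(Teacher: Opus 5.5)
Your proof is correct in substance. It follows a different route from the paper in both parts, and for (i) your route is the one that actually works.

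\textbf{Part (i).} The paper says \eqref{equivGG} follows from \eqref{ell12a2} and the monotonicity of $G$. As you noticed, \eqref{ell12a2} points the wrong way. With $X=G^{-1}(Y)$ it only gives $G^{-1}((1+C_1\delta)Y)\le(1-\delta)G^{-1}(Y)$, which is an upper bound on the ratio in \eqref{equivGG}. It is also compatible with slowly varying $G$ such as $1/\log X$, for which \eqref{equivGG} fails. When $q=1$, \eqref{ell12a2} is even false: $f=e^X$ satisfies \eqref{qlim} and $G((1-\eps)X)=e^{\eps X}G(X)$.

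What is really needed is a quantitative strict decrease, $G((1-\delta)X)\ge(1+c\delta)G(X)$ and $G((1+\delta)X)\le(1-c\delta)G(X)$, which amounts to $\liminf X/(fG)>0$. From there your sandwich argument is complete. It carries over to $H$ because $|XH'/H|\sim X/(fG)$ by \eqref{propFf}--\eqref{propFH2}. The constant is $1$, not $1/q$ as your last computation suggests, but this is immaterial. The paper instead deduces the $H$ case from the $G$ case through \eqref{propFH2}.

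\textbf{Part (ii).} The paper sets $g(Y)=f(G^{-1}(Y))$ and uses $-Yg'/g=f'G\to q$ to get $g\le CY^{-(q+1)}$. Integrating $(G^{-1})'=-g$ then gives $G^{-1}(Y)\le CY^{-q}$. This needs nothing beyond $f'G\to q$. Your route through \eqref{propFf} additionally needs a growth bound $\log f\ge(1-o(1))\log X$. In exchange it shows transparently where the threshold $\beta/q$ comes from, and that a better growth bound enlarges the admissible range of $\alpha$.

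\textbf{Fixes in your write-up.} None of these affects the conclusion.
\begin{itemize}
\item The bound $Xf'/f\le 4$, and with it ``$|k|$ bounded between two positive constants'', is false for $q=1$. For $f=e^X$ one has $Xf'/f=X$, and in general $|k|\to\infty$ when $q=1$. The derivation in the proof of Lemma~\ref{ell12a} reverses the convexity inequality. Convexity of $h$ gives $h(X/2)\ge h(X)-\frac{X}{2}h'(X)$, a \emph{lower} bound on $h'$, which is the direction you use yourself. Since the sandwich only uses $|k|\ge c_1>0$, simply drop the upper bound.
\item ``$h(X/2)\le h(X)/2$ eventually'' does not follow from convexity on a half-line; take $h(X)=X+100$. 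Use $h'(X)\ge (h(X)-h(X_1))/(X-X_1)$ instead, which gives $\liminf Xh'/h\ge 1$.
\end{itemize}

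Cleanest of all, and the rigorous form of your fallback: $(fG)'=f'G-1\to q-1$, so $fG/X\to q-1$. Hence $|k|=X/(fG)\to 1/(q-1)\in(0,\infty]$, and $Xf'/f=|k|\,f'G\to q/(q-1)$ (to $+\infty$ if $q=1$). This gives both the lower bound on $|k|$ for (i) and the growth bound for (ii), without convexity or regular variation.
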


\begin{proof}[Proof of Lemma~\ref{lem1-1}]
\noindent$(i)$  We observe that \eqref{equivGG} for $G$ is a direct consequence of \eqref{ell12a2} and decreasing property of $G$. Therefore, by using \eqref{propFH2} we also deduce the conclusion for $H$.

\noindent$(ii)$ Let $X=G^{-1}(Y)$ and $g(Y)=f(G^{-1}(Y))$. By a simple computation, we have 
$$
g'(Y)=f'(X)\frac{d X}{dy}=-f'(X)f(X),
$$
which implies that 
$$
-\frac{Yg'(Y)}{g(Y)}\to q\quad\hbox{as }\ Y\to 0^+.
$$
Hence, there exists $Y_0>0$ such that 
$$
\frac{g'(Y)}{g(Y)}\ge -\frac{q+1}{Y}\quad\hbox{for all }\  0<Y\le Y_0.
$$
Therefore, we have 
$$
\log g(Y)-\log g(Y_0)\le (q+1)\log \frac{Y_0}{Y} \quad\hbox{for all }\  0<Y\le Y_0.
$$
Thus 
$$
g(Y)\le  Y^{-(q+1)} \Big(Y_0^{(q+1)}g(Y_0)\Big) \quad\hbox{for all }\  0<Y\le Y_0.
$$
Now we use 
$$
\begin{aligned}
G^{-1}(Y)-G^{-1}(Y_0)&=-\int_Y^{Y_0}(G^{-1}(z))'dz=\int_Y^{Y_0}f(G^{-1}(z))dz=\int_Y^{Y_0}g(z)dz\\
&\le  Y_0^{(q+1)}g(Y_0)\int_Y^{Y_0}z^{-(q+1)}dz\le C(q,Y_0,g(Y_0))(Y^{-q}-Y_0^{-q})\le CY^{-q}
\end{aligned}
$$
Hence for any $\alpha,\beta,>0$ we have 
$$
Y^\beta(G^{-1}(Y))^\alpha\le CY^{\beta-\alpha q}+Y^\beta G^{-1}(Y_0) \quad \hbox{for all }\ 0<Y\le Y_0
$$
This conclude the proof of \eqref{eqen0}.
\end{proof}

\section{Proof of upper part of Theorem \ref{Thm} assuming Proposition~\ref{qssblowup}} \label{upperrrrrr}

The upper estimate of $u$ in Theorem \ref{Thm} i.e., 
\begin{equation}\label{infer1-upper}
	u(x,t)\le (1+o(1))G^{-1}\bigg(T-t+\frac{|x|^2}{4q|\log|x|^2|}\bigg),
	\end{equation}
 will be a consequence of the following result from \cite{CS25} (see \cite[Theorem $3.1$ and formula (4.22)]{CS25}). 

\begin{thm}\label{rappelH}
Let $\Omega=\mathbb{R}^n \text{ or }\Omega=B_R$, and $u_0\ge 0$ be radially symmetric, nonincreasing in $|x|$ and  $T<\infty$.
Assume that  
\begin{equation*}
 f\in C^1([0,\infty)),\quad f(0)\ge 0, \quad \lim_{s\to \infty}f(s)=\infty,
 \end{equation*}
 \begin{equation*}
	\hbox{$f$ is $C^2$ and } \frac{f}{\log f} \hbox{ is convex at $\infty$,} \quad \int^\infty \frac{\log f(s)}{f(s)}ds <\infty.
	\end{equation*}
	If $\Omega=\mathbb{R}^n$ assume also that $u_0$ is nonconstant. 
	Then there exist $A_0,\eta>0$ such that
	\begin{equation}\label{upperTheoL}
	u(x,t)\le H^{-1}\bigg(H(u(0,t))+\frac{|x|^2}{4}\bigg)\quad \text{in }\ B_\eta\times(T-\eta,T),
	\end{equation}
	where $H$ is define as in \eqref{Hqlim}.
Moreover, there exist constants $A>0$ such that for $M>0$ as in \eqref{fposM},
\be{Rappelupper}
-u_r\ge \frac{rf(u)}{2(A+\log f(u))},\quad \hbox{in }\ Q:=\big\{(x,t)\in B_{R/2}\times(T/2,T); \ u(x,t)\ge M\big\}.
\ee
\end{thm}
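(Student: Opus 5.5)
The plan is a Friedman--McLeod type comparison argument for an auxiliary function built from $u_r$, followed by an integration in $r$. Set
$$
F(u):=\frac{f(u)}{A+\log f(u)},\qquad J(r,t):=u_r+\eps\, r\,F(u)\quad\hbox{in } Q,
$$
where the constant $A>0$ and the parameter $\eps\in(0,\tfrac12]$ will be chosen below. The aim is to prove $J\le0$ in $Q$ with $\eps=\tfrac12$, which is exactly \eqref{Rappelupper}. The upper estimate \eqref{upperTheoL} should then follow by integrating in $r$. Indeed, since $H'(u)=-1/F(u)$, the inequality $J\le0$ reads
$$
\partial_r\bigl[H(u(r,t))\bigr]=-\frac{u_r}{F(u)}\ge \frac r2 .
$$
Integrating from $0$ to $r=|x|$ gives $H(u(x,t))\ge H(u(0,t))+|x|^2/4$. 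Since $H$ is decreasing, this yields $u\le H^{-1}(H(u(0,t))+|x|^2/4)$. This integration is valid as long as $u\ge M$ along the whole segment $[0,r]$. To guarantee this, I would choose $\eta$ small so that $H(u(0,t))+\eta^2/4<H(M)$ for $t>T-\eta$, using $u(0,t)\to\infty$. Then, at the first radius where $u$ would reach $M$, the integrated inequality would already give a contradiction. So the segment stays in $Q$ and the bound holds on $B_\eta\times(T-\eta,T)$.

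The core step is the parabolic inequality satisfied by $J$. Differentiating the equation in $r$ gives
$$
(u_r)_t-\Delta u_r+\frac{n-1}{r^2}u_r=f'(u)u_r .
$$
A direct computation for $rF(u)$, followed by the substitution $u_r=J-\eps rF$, should lead to an inequality of the form
$$
J_t-\Delta J+\frac{n-1}{r^2}J-b\,J\le \eps r\bigl[F'f-f'F+2\eps FF'\bigr]-\eps rF''(u)u_r^2 ,
$$
with a locally bounded coefficient $b$. The key algebra is
$$
F'=\frac{f'(A+\log f-1)}{(A+\log f)^2},\qquad F'f-f'F=-\frac{ff'}{(A+\log f)^2},\qquad 2FF'=\frac{2ff'(A+\log f-1)}{(A+\log f)^3}.
$$
Hence the bracket equals $\frac{ff'}{(A+\log f)^2}\bigl(2\eps\frac{A+\log f-1}{A+\log f}-1\bigr)$, which is $\le0$ precisely for $\eps\le\tfrac12$; this is where the constant $\tfrac12$ comes from. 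The term $-\eps rF''u_r^2$ must also be $\le0$, i.e.\ $F$ must be convex on $[M,\infty)$. I would deduce this from the convexity of $f/\log f$ at $\infty$, taking $A$ large and $M$ larger if needed. This transfer of convexity from $f/\log f$ to $f/(A+\log f)$ is the point I expect to be most delicate and would check first. If it fails, I would instead work with $F=f/\log f$ near infinity and put $A$ only in the normalization.

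Finally, I would apply the maximum principle to $J$ on $Q$, which requires $J\le0$ on the parabolic boundary of $Q$. At $r=0$ we have $J=0$. On $\{t=T/2\}$ and on $\{r=R/2\}$, the strong maximum principle together with the nonconstant radial decreasing data gives $u_r\le -c\,r<0$ on the compact set where $u\ge M$. Since $F(u)\le f(u)/A$, choosing $A$ large makes $\eps rF(u)$ smaller than $c\,r$ there. On the lateral boundary $\{u=M\}$ of $Q$, the same Hopf-type negativity of $u_r$ holds uniformly for $t\in[T/2,T)$ away from $r=0$. This uniformity follows from the radial monotonicity and the fact that $0$ is the only blow-up point, so $F(M)\le f(M)/A$ can again be absorbed by taking $A$ large. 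This last boundary control, uniform up to $t=T$, is the main obstacle I anticipate. Once it is established, the maximum principle yields $J\le0$ in $Q$, which proves \eqref{Rappelupper}, and \eqref{upperTheoL} then follows by the integration above.
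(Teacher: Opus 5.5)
Your route is the Friedman--McLeod comparison on which the cited result \cite{CS25} rests. The paper gives no proof of its own; it only quotes Theorem~3.1 and formula (4.22) there. Your inequality for $J=u_r+\tfrac r2F(u)$ and the sign of the bracket for $\eps\le\tfrac12$ are correct.

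The point you flagged as most delicate does hold. With $m=A+\log f$ one computes
$$m^3\,(f/m)''=m(m-1)f''-(m-2)f'^2/f .$$
The map $m\mapsto (m-2)/(m(m-1))$ is decreasing for $m\ge 2+\sqrt2$. Hence convexity of $f/\log f$ on $\{\log f\ge 2+\sqrt2\}$ gives convexity of $f/(A+\log f)$ there for every $A\ge0$. So the fallback is unnecessary, and $M$ need not be enlarged with $A$.

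One step fails as written: the integration. The inequality $H(u(r,t))\ge H(u(0,t))+r^2/4$ is an \emph{upper} bound for $u$, so it cannot stop $u$ from dropping to $M$. At the first radius $r^*$ with $u(r^*,t)=M$ it only gives $H(M)\ge H(u(0,t))+r^{*2}/4$. This is fully compatible with your choice $H(u(0,t))+\eta^2/4<H(M)$, so there is no contradiction, and you cannot conclude that the segment stays in $Q$.

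You do not need that conclusion, however. By radial monotonicity, $\{u(\cdot,t)\ge M\}=[0,\rho(t)]$. For $|x|\le\rho(t)$ you integrate \eqref{Rappelupper}. For $\rho(t)<|x|<\eta$ you have $u(x,t)<M<H^{-1}\bigl(H(u(0,t))+|x|^2/4\bigr)$, precisely because of your choice of $\eta$.

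The boundary control has a similar hole. On the lateral boundary $\{u=M\}$, negativity of $u_r$ ``away from $r=0$'' is not enough. This set contains $(0,t)$ at any $t\in[T/2,T)$ with $u(0,t)=M$, which happens if $u(0,T/2)<M$. The clean fix is the uniform bound $u_r\le -cr$ on $[0,R/2]\times[T/2,T)$:
\begin{itemize}
\item The function $w=u_r/r$ solves $w_t=\Delta_{n+2}w+f'(u)w$, with $w\le0$ and $f'(u)\ge -K$.
\item Hence $w$ is a subsolution of $z_t=\Delta_{n+2}z-Kz$.
\item The data of $w$ are negative: at $t=T/2$ by the strong maximum principle, and at $r=R/2$ uniformly up to $T$ by single-point blow-up.
\item Comparison then gives $w\le -c<0$ on $[0,R/2]\times[T/2,T)$.
\end{itemize}
Once $A$ is taken large, this controls every part of the parabolic boundary of $Q$ at once.
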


\smallskip

\smallskip

 	\begin{proof}[Proof of upper part of Theorem~\ref{Thm}] 
 Since $m(t):=u(0,t)=\|u(t)\|_\infty\to \infty$, we have  $G(m(t))\to0^+$ 
 as $t\to T$. 
 
 Applying property \eqref{propFH2} to  inequality \eqref{upperTheoL}, 
 we get
$$u(x,t)\le H^{-1}\left(H(m)+\frac{|x|^2}{4}\right)
 =H^{-1}\left((1+o(1))\left(qG(m)|\log(G(m))|+\frac{|x|^2}{4}\right)\right)$$
as $(x,t)\to (0,T)$.  Next applying \eqref{propinversFH2} and \eqref{equivGG},
we obtain
 \begin{equation}\label{nuitt}
 u(x,t)\le (1+o(1)) G^{-1}\left(\frac{G(m)|\log(G(m))|+\frac{|x|^2}{4q}}
 {\Bigl|\log\Bigl(G(m)|\log(G(m))|+\frac{|x|^2}{4}\Bigr)\Bigr|}\right).
\end{equation}
 Also, for $(x,t)$ close enough to $(0,T)$,
 we have $G(m)\le G(m)|\log(G(m))|+\frac{|x|^2}{4}<1$,
 so that
 $$\begin{aligned}
 \bigg|\log\bigg(G(m)|\log G(m)|+\frac{|x|^2}{4}\bigg)\bigg|
 &\le\min\left\{|\log(G(m))|,\bigl|\log\bigl(\textstyle\frac{|x|^2}{4}\bigr)\bigr|\right\}\\
 &=(1+o(1))\min\left\{|\log(G(m))|,\bigl|\log|x|^2\bigr|\right\}
 \end{aligned}$$
 as $(x,t)\to (0,T)$, hence
 $$\frac{G(m)|\log(G(m))|+\frac{|x|^2}{4q}}
 	{\Bigl|\log\Bigl(G(m)|\log(G(m))|+\frac{|x|^2}{4}\Bigr)\Bigr|}
 	\ge (1+o(1))\left(G(m)+\frac{|x|^2}{4q\bigl|\log|x|^2\bigr|}\right).$$
  Since $G^{-1}$ is decreasing near $0^+$, 
  we thus deduce from \eqref{nuitt} 
    that
 \begin{equation}\label{conclprop31}
  u(x,t)\le (1+o(1))G^{-1}\left((1+o(1))\left(G(m)+\frac{|x|^2}{4\bigl|\log|x|^2\bigr|}\right)\right),
 \quad (x,t)\to (0,T).
 \end{equation}
 On the other hand, by Proposition~\ref{qssblowup}, we have 
 \begin{equation}\label{0J1}
  G(u(0,t))=(1+o(1))(T-t) \quad\hbox{as }\quad t\to T.
  \end{equation}
  Combining this with \eqref{conclprop31} and applying~\eqref{equivGG} once again, we obtain  \eqref{infer1-upper} and so the upper part of~\eqref{infer1}.
 \end{proof}

\section{Proof of Proposition~\ref{qssblowup}}\label{GK}

{\bf Step 1.} {\it Gradient upper estimate and no-needle property.} We shall prove the following lemma.
\begin{lemma}\label{noneedle}
	Under assumptions of Proposition~\ref{qssblowup}, there exists $R_0>0$ and  $t_1 \in (\max\{T-1,0\},T)$ such that 
	\begin{equation}\label{eq:esth}
		u(R_0\sqrt{T-t}, t) \geq \frac12G^{-1}(T-t)\quad\ \text{for} \ t \in (t_1,T).
	\end{equation}
\end{lemma}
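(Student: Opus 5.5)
We combine the gradient lower bound \eqref{Rappelupper} from Theorem~\ref{rappelH} with the type~I assumption and the lower bound \eqref{belowrate}. Write $m(t)=u(0,t)\ge G^{-1}(T-t)$. Since $u$ is of type~I, we also have $m(t)\le G^{-1}(C(T-t))$ for $t$ close to $T$.

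\textbf{Step 1: gradient upper estimate.} We first need the reverse of \eqref{Rappelupper}, that is, an upper bound on $|u_r|$. The natural route is the Bernstein-type estimate used in \cite{C25-2,CS25}. Consider the auxiliary function $J=u_r^2/2-(F(m(t))-F(u))$ with $F'=f$, or its variant with a small weight. Then one applies the maximum principle to obtain
$$|u_r|^2\le 2\big(F(m(t))-F(u)\big)\quad\hbox{for } |x|\le R,\ t\ \hbox{near } T.$$
An alternative is to integrate the radial equation $(r^{n-1}u_r)_r=r^{n-1}(u_t-f(u))$. When $u_t\ge0$ this directly gives $-r^{n-1}u_r\le \int_0^r s^{n-1}f(u)\,ds\le r^n f(m)/n$, hence
$$|u_r|\le \frac{r f(m(t))}{n}.$$
Under type~I without time monotonicity, I would instead invoke standard parabolic estimates in the rescaled variables. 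These give $|\nabla_y w|$ bounded and hence $|u_r|\le C f(m)\sqrt{T-t}$, up to log factors controlled by Lemma~\ref{ell12a}. I expect Step~1 to be the main obstacle in the non-monotone case.

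\textbf{Step 2: the needle estimate.} Using the bound $|u_r|\le C r f(m(t))$, we integrate from $0$ to $r=R_0\sqrt{T-t}$:
$$u(r,t)\ge m(t)-C R_0^2 (T-t) f(m(t)).$$
By the type~I bound and \eqref{belowrate}, we have $T-t\le G(m)$. Also $f(m)G(m)\le C' m$ for large $m$, since $f'G\to q$ and L'H\^opital's rule give $\frac{f(X)G(X)}{X}\to 0$. Indeed, $(fG)'=f'G-1\to q-1$. So for $q>1$ we get $fG\sim (q-1)X$; for $q=1$ we get $fG=o(X)$. In either case, for $R_0$ small the loss is at most $m/2$, so $u(R_0\sqrt{T-t},t)\ge m/2\ge \frac12 G^{-1}(T-t)$. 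This choice of $R_0$ works when $q$ is near $1$; for general $q$ we need $CR_0^2(q-1)\le 1/2$, which is fine because $R_0$ is free to be small. The constant $1/2$ in \eqref{eq:esth} is what permits this choice.

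\textbf{Step 3: bookkeeping.} We choose $t_1$ so that $m(t)\ge M$ and the asymptotic relations above are valid on $(t_1,T)$. This concludes \eqref{eq:esth}. The delicate point is ensuring that the gradient constant $C$ in Step~1 does not grow with $m$. In the $u_t\ge0$ case this is immediate. Otherwise, I would try to reduce to it through the type~I bound combined with the no-blow-up of $\nabla_y w$ from local parabolic regularity of the rescaled equation.
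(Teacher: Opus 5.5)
Under $u_t\ge0$ your argument is complete, and it takes a different, more elementary route than the paper. Integrating $(r^{n-1}u_r)_r=r^{n-1}(u_t-f(u))\ge -r^{n-1}f(u)$ gives $-u_r\le rf(m)/n$. Combined with the type~I bound $T-t\le C^{-1}G(m)$ and $fG\le C'X$ (from $(fG)'=f'G-1\to q-1$), this yields $m-u(R_0\sqrt{T-t},t)\le C''R_0^2\,m$, so no imported gradient estimate is needed.

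The gap is in the non-monotone case. The lemma is stated under the assumptions of Proposition~\ref{qssblowup}, i.e.\ for type~I solutions with no time monotonicity, and that generality is used: for $q>q_S$ in Theorem~\ref{Thm} only type~I is known (from \cite{souplet2022universal}), not $u_t\ge0$. There your Step~1 is left open, and the missing ingredient is exactly the type~I gradient bound \eqref{grad-estim1}. The paper does not prove it here either; it imports it as Lemma~\ref{grad-estim-1} from \cite[Proposition~3.2]{C25-1} and \cite[Lemma~4.3]{C25-2}. After that the proof is the one-line integration $m(t)-u(R_0\sqrt{T-t},t)\le C_0R_0G^{-1}(T-t)$ combined with \eqref{belowrate}.

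The route you propose for that bound, parabolic regularity for the rescaled $w$, is circular as stated. $w$ is only defined where $u\ge M$. Moreover, \eqref{eqw} and \eqref{eqwgrand} contain the quadratic term $|\nabla w|^2(f'G-q)$, divided by $w$ when $q>1$. Interior estimates for them therefore need $w$ bounded below (resp.\ away from $0$) on $|y|\le R$, i.e.\ $u\ge G^{-1}(C(T-t))$ for $|x|\le R\sqrt{T-t}$. That is essentially the no-needle property you are trying to prove. In the paper the lower bound on $w$ in \eqref{boundedgradu} is deduced from the gradient bound, not conversely.

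A non-circular way, at least for $q>1$, is to rescale $u$ itself around each point. For $(a,t_0)$ near $(0,T)$ and $\lambda=\sqrt{T-t_0}$, set $v(z,\tau)=u(a+\lambda z,t_0+\lambda^2\tau)/\psi(t_0)$ on $B_1\times(-\frac12,0]$. Type~I and regular variation of $G^{-1}$ give $0\le v\le C$. Also $v_\tau-\Delta v=\lambda^2 f(\psi v)/\psi$ has bounded right-hand side, since $fG\sim(q-1)X$ and $f$ is regularly varying. Interior parabolic estimates then give $|\nabla u(a,t_0)|\le C\psi(t_0)/\sqrt{T-t_0}$. For $q=1$ the analogous additive rescaling $u-\psi(t_0)$ has no uniform lower bound, which is why the paper appeals to \cite[Lemma~4.3]{C25-2}.

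Two minor slips:
\begin{itemize}
\item It is the type~I bound alone that gives $T-t\le C^{-1}G(m)$; \eqref{belowrate} gives the reverse inequality.
\item $fG/X\to0$ is false for $q>1$, although the bound $fG\le C'X$ that you actually use is correct.
\end{itemize}
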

We will use the following gradient estimate in the proof of this lemma.
\begin{lemma}\label{grad-estim-1}
Under assumptions of Proposition~\ref{qssblowup}, there exists $\delta>0$ such that 
\be{grad-estim1}
\forall\ t\in(T-\delta^2,T),\quad\|\nabla u(\cdot,t)\|_\infty\le C_0 \begin{cases} (T-t)^{-1/2}\quad\hbox{if }\ q=1,\\ (T-t)^{-1/2}G^{-1}(T-t)\quad\hbox{if }\ q>1,\end{cases}
\ee
with $C_0:=C_0(n,f,T,\delta)>0$.
\end{lemma}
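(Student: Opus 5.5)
The plan is to use the classical Bernstein-type argument for the auxiliary function $J=\frac12|\nabla u|^2-\varphi(u)$, with $\varphi$ built from $f$, combined with the type~I bound $\|u(t)\|_\infty\le G^{-1}(C(T-t))$. By radial symmetry $|\nabla u|=|u_r|$. By parabolic regularity, $u_r$ is bounded on $\{|x|\ge \delta_0\}\times(T/2,T)$, since $0$ is the only blow-up point. The task therefore reduces to estimating $u_r$ in a neighborhood of the origin in terms of $\sup u$.

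First approach: a localized maximum principle. For $w=u_r$ we have $w_t-\Delta w+\frac{n-1}{r^2}w=f'(u)w$. I would compare $\frac12 w^2$ with $\varphi(u)$, taking
$$\varphi(u)=\int_{M}^{u}f(s)\,ds+K u,$$
in the spirit of the energy inequality $|\nabla u|^2\le 2F(u)+C$ for $f$ convex. A shortcut is more robust: apply the parabolic estimates on backward parabolic cylinders of size $\sqrt{T-t}$ rescaled in the similarity variables. For fixed $t_0$, set $\lambda=\sqrt{T-t_0}$ and $v(z,\tau)=u(x_0+\lambda z,t_0+\lambda^2\tau)$ for $\tau\in(-1/2,0)$. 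Then
$$v_\tau-\Delta v=\lambda^2 f(v),\qquad \lambda^2 f(v)\le (T-t_0)\,f\bigl(G^{-1}(C'(T-t_0))\bigr).$$

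The key estimate is that $Yf(G^{-1}(Y))$ has a controlled size. Since $f'G\to q$, $G(X)f(X)$ behaves like $X/(q-1)$ for $q>1$, by L'H\^opital applied to $X/(1/G)$. Hence $Yf(G^{-1}(Y))\sim G^{-1}(Y)/(q-1)$. For $q=1$ the quantity $Yf(G^{-1}(Y))$ is slowly varying, and I need it to be $O(1)$ after using $\Psi$. To handle this I would instead work with the transformed function $w$ from the outline, which satisfies a parabolic equation with bounded right-hand side, by the type~I bound.

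For $q>1$ the argument goes as follows. Comparing times $t_0+\lambda^2\tau$ with $\tau\ge -1/2$, Lemma~\ref{ell12a} (through \eqref{equivGG}) gives
$$G^{-1}\bigl(C(T-t)\bigr)\le C\,G^{-1}(T-t_0).$$
Hence $|\lambda^2 f(v)|\le C G^{-1}(T-t_0)$ and $|v|\le CG^{-1}(T-t_0)$. Interior $L^\infty$ gradient estimates for the heat equation then give $|\nabla v(0,0)|\le C\,G^{-1}(T-t_0)$. Undoing the scaling yields \eqref{grad-estim1}.

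For $q=1$ I would apply the same scaling to $w=\log\bigl((T-t)/G(u)\bigr)$. The type~I bound together with \eqref{belowrate} makes $w$ bounded. The equation for $w$ has $f_1(w)=1-e^w$, which is bounded, and a remainder $H_0$. I will need to check that $H_0$, or at least the gradient-quadratic term arising from the chain rule, is bounded. Here the derivative of $\log G(u)$ is $-\nabla u/(fG)$, and the quadratic term has coefficient $(1-f'G)/(fG)^2$, which has a favorable sign or is small by \eqref{qlim}. Bernstein/interior estimates then give $|\nabla_y w|\le C$. Converting back, $\nabla_x u=-fG\,\nabla_x w\,/\,\sqrt{T-t}$, and $f(u)G(u)\le C$ for $q=1$ since $fG\sim X/(q-1)$ fails to apply. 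I would verify this last bound using $(fG)'=f'G-1=o(1)$, which shows $fG=o(u)$; this is not bounded. So I would instead keep the estimate in the form produced by the quasilinear gradient term.

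The main obstacle is this $q=1$ case. The nonlinear gradient term in the $w$-equation must be controlled, uniformly in the type~I regime, for instance via a Bernstein argument on $|\nabla w|^2$ using the convexity of $f/\log f$ in \eqref{qlim}. The convexity assumption is exactly what gives Theorem~\ref{rappelH}-type sign control in that computation.
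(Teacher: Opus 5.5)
Your $q>1$ argument is correct. It is essentially the rescaling argument the paper imports from \cite[Proposition~3.2]{C25-1}: on a backward cylinder of size $\sqrt{T-t_0}$, both $v$ and $\lambda^2 f(v)$ are $O(G^{-1}(T-t_0))$, and interior parabolic estimates conclude. Two small corrections. First, $(fG)'=f'G-1\to q-1$ gives $f(X)G(X)\sim (q-1)X$, not $X/(q-1)$; only the linear growth matters. Second, $G^{-1}(C\lambda^2)\le C'G^{-1}(\lambda^2)$ for a fixed $C$ (which may be $<1$ in \eqref{dam1a}) does not follow from \eqref{equivGG}, which only allows factors $1+o(1)$. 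It follows from the regular variation of $G^{-1}$ at $0$ with index $1-q$ (Remark~\ref{CSLem}).

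The case $q=1$, however, is not proved, and the route you sketch cannot close it. First, $w=\log\bigl((T-t)/G(u)\bigr)$ is not bounded. Type~I gives $w\le C$, and \eqref{belowrate} only gives $w(0,s)\ge 0$; away from the centre $u$ may be small, so $w$ is not bounded below uniformly in $s$. Interior estimates for $w$ on unit cylinders therefore lack the oscillation control they need; in the paper, the lower bound in \eqref{boundedgradu} is deduced \emph{from} the gradient bound, not conversely. Second, even granting $|\nabla_y w|\le C$, the identity $\nabla_x u=f(u)G(u)(T-t)^{-1/2}\nabla_y w$ only yields $|\nabla u|\le C(T-t)^{-1/2}f(u)G(u)$. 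For $q=1$, \eqref{qlim} gives $(fG)'=f'G-1\to 0$, i.e.\ $fG=o(X)$, not $fG=O(1)$. This is exactly where your proposal stalls.

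The obstruction is real, not a weakness of your method. Take $f(u)=e^{(1+u)^p}$ with $0<p<1/2$. It satisfies \eqref{sem0}--\eqref{qlim} with $q=1$ (indeed $f'G-1=O(|\log G|^{-1})$), and $f(X)G(X)\sim X^{1-p}/p$, $G^{-1}(Y)\sim|\log Y|^{1/p}$. Put $s=|\log(T-t)|$ and $X_2=G^{-1}(2(T-t))$. There are two cases.
\begin{itemize}
\item If $u(\sqrt{T-t},t)<X_2$, then by \eqref{belowrate} $u$ drops over a distance $\sqrt{T-t}$ by at least $G^{-1}(T-t)-X_2=\int_{T-t}^{2(T-t)}\frac{(fG)(G^{-1}(z))}{z}\,dz\gtrsim s^{1/p-1}$.
\item Otherwise, \eqref{Rappelupper} at $r=\sqrt{T-t}$, together with the monotonicity of $f/(A+\log f)$, gives $-u_r\ge \frac{\sqrt{T-t}\,f(X_2)}{2(A+\log f(X_2))}\gtrsim (T-t)^{-1/2}s^{1/p-2}$.
\end{itemize}
Either way $\sqrt{T-t}\,\|\nabla u(t)\|_\infty\to\infty$, so no constant $C_0$ works.

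Hence, for $q=1$, the estimate needs an extra hypothesis such as $\sup_{X\ge M}f(X)G(X)<\infty$. This holds for $e^uL(e^u)$, where $fG\to1$; that is the setting of \cite[Lemma~4.3]{C25-2}, to which the paper's one-line proof reduces, tacitly using this boundedness. It also holds for $\exp_m(u^p)$ with $p\ge1$. Alternatively, the $q=1$ statement should be formulated as $|\nabla_y w|\le C$, i.e.\ with the factor $f(u)G(u)$, which is the form \eqref{DwysM1} actually uses.
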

\begin{proof}[Proof of Lemma~\ref{grad-estim-1}]
 We refer to \cite[Proposition~3.2]{C25-1} for the case of $q>1$ due to Remark~\ref{CSLem}. Using \eqref{propFf}, the case of $q=1$ follows by the same approach as in \cite[Lemma~4.3]{C25-2}.
\end{proof}
 \begin{proof}[Proof of Lemma~\ref{noneedle}]
Fix $t_1\ge T-\delta^2$ and $R_0>0$ (to be choosing later). Using \eqref{grad-estim1} for $q>1$, we have

$$
u(0,t)-u(R_0\sqrt{T-t},t)=\int_0^{R_0\sqrt{T-t}}-u_r(r,t)dr\le C_0R_0G^{-1}(T-t).
$$
Therefore, by \eqref{belowrate} and choosing sufficiently small $R_0\le \frac{1}{2C_0}$, we get 
$$
u(R_0\sqrt{T-t},t)\ge (1-C_0R_0) G^{-1}(T-t)\ge \frac{1}{2}G^{-1}(T-t).
$$

The case of $q=1$ follows by the same way.
 \end{proof}

\vspace{.2cm}

\noindent{\bf Step 2.} {\it Rescaled equation.} We defined the similarity variables as
\be{dam111}
y=\frac{x}{\sqrt{T-t}}\quad\hbox{and }\ s=-\log(T-t).
\ee
Here we consider the PDE in \eqref{SHE} in $B_{R_0}\times (t_1,T)$, where $R_0>0$ and $t_1$ are defined in Lemma~\ref{noneedle}. 
Let $s_0>-\log(T-t_1)$ such that $G^{-1}(e^{-s})\ge M$ in $[s_0,\infty)$ where $M$ is defined as in \eqref{fposM}. We then introduce the following rescaled function:
\be{dam11}
w(y,s)=\begin{cases}
\log \Big(\frac{T-t}{G(u(x,t))}\Big)\quad \hbox{if }\ q=1\\
\Big(\frac{(q-1)(T-t)}{G(u(x,t))}\Big)^{q-1} \quad \hbox{if }\ q>1.
\end{cases}
\ee
Since $u$ is a type I solution of \eqref{SHE}, it the follows from \eqref{grad-estim1} that 
\be{DwysM1}
w\le M,\quad|\nabla w|\le M, 
\ee
for some $M>0$. Which in particular implies in the case of $q=1$ that 
\be{boundedgradu}
-M(1+|y|)\le w(y,s)\le M,\quad |\nabla w|\le M.
\ee

\begin{itemize}

\item {\bf Equation for $q=1$.} Setting
$ u(x,t) = G^{-1}\left(\frac{T-t}{v(y,s)}\right)$, i.e., $ v(y,s)=\frac{T-t}{G(u(x,t))}$, we have 
\begin{equation}\label{veq}
		v_s = \Delta v -\frac{y}{2} \cdot \nabla v - \frac{|\nabla v|^2}{v} + v^2 - v + \frac{|\nabla v|^2}{v}\Bigl(f'(u)G(u) - 1\Bigr). 
	\end{equation}
	This was already done in \cite{Ryoto}, we give here a proof for convenience of readers.
It follows by a simple computation that
 \begin{align*}
	u_t &= f(u)\left[\frac{1}{v} + \frac{T-t}{v^2}\left(\frac{v_s}{T-t} + \frac{y}{2(T-t)}\cdot \nabla_y v\right) \right],\\
	\nabla_x u &= f(u)\frac{\nabla_y v}{v^2}\sqrt{T-t},\\
	\lap_x u &= f'(u)\frac{\nabla_x u \cdot \nabla_y v}{v^2}\sqrt{T-t} + f(u)\left(\frac{\lap_y v}{v^2} - 2\frac{|\nabla_y v|^2}{v^3}\right)\\
				 &= f'(u)f(u)G(u) \frac{|\nabla_y v|^2}{v^3} + f(u)\left(\frac{\lap_y v}{v^2} - 2\frac{|\nabla_y v|^2}{v^3}\right).
 \end{align*}
 We then obtain \eqref{veq} by using $u_t-\Delta u=f(u)$.
Now using $v=e^w$, we obtain
\be{eqw}
w_s -\Delta w +\frac{y}{2} \cdot \nabla w = e^w - 1 + |\nabla w|^2\Bigl(f'(u)G(u) - 1\Bigr).
\ee 

\item {\bf Equation for $q>1$.} By the same way, a simple calculation yields
\be{eqwgrand}
w_s -\Delta w +\frac{y}{2} \cdot \nabla w =w^\frac{q}{q-1} -(q-1) w + \frac{|\nabla w|^2\Bigl(f'(u)G(u) - q\Bigr)}{(q-1)w}.
\ee
\end{itemize}
Considering the PDE in \eqref{SHE} on $B_{R_0}\subset\mathbb{R}^n$, we see that $w$ is a global-in-time solution of 
\begin{equation}\label{VraieE0}
 w_s-\Delta w +\frac{1}{2}y\cdot\nabla w=f_1(w)+ H_0(s,y)
\end{equation}
which is equivalent to
\begin{equation}\label{VraieE}
\rho w_s-\nabla\cdot(\rho\nabla w)=f_1(w)\rho +\rho H_0(s,y)\quad \hbox{in }\  \mathcal{W}:=D(s)\times (s_0,\infty),
\end{equation}
where 
$$ D(s):=e^{s/2}B_{R_0},$$
 the main terms $f_1$ and remainder terms  $H_0$ are given by:

$\bullet$ if $q=1$:
\begin{equation}\label{VraieEE}
f_1(w)=e^w-1\quad \hbox{and }\quad H_0(s,y):=|\nabla w|^2\Bigl(f'(u)G(u) - 1\Bigr);
\end{equation}

$\bullet$ if $q>1$:
\begin{equation}\label{VraieEEmod}
f_1(w)=w^\frac{q}{q-1} -(q-1) w\quad \hbox{and }\quad H_0(s,y):=\frac{|\nabla w|^2\Bigl(f'(u)G(u) - q\Bigr)}{(q-1)w}.
\end{equation}

\vspace{.2cm}

\noindent{\bf Step 3.} {\it Control of nonautonomous terms $H_0$.}

By \eqref{eq:esth}, using the radial decreasing property of $u$, we have 
$$
u(y\sqrt{T-t},t)\ge  \frac12G^{-1}(T-t) \quad \hbox{for $y\in B_{R_0}$ and $t>t_1$}.
$$
It then follows, by assumption \eqref{qlim} and properties  \eqref{dam1a}, \eqref{grad-estim1} and \eqref{DwysM1}, that 
\be{decayH}
\|H(s,\cdot)\|_{L^\infty(D(s)}\le C s^{-\alpha_0}\quad\hbox{for all }\ s\ge s_0,
\ee
 where we use the fact that
$$
\nabla_y w=(q-1)w \frac{(T-t)^{1/2}\nabla_x u}{f(u)G(u)} \quad \hbox{for }\ q>1.
$$

\vspace{.2cm}

{\bf Step 4.} {\it Required regularity of $w$.}
 For all $0<\eps<\tau< T$, denoting $Q_{\eps,\tau}:=\R^n\times(\eps,\tau)$, it follows from standard parabolic $L^q$ and Schauder regularity  
 (see, e.g., \cite[Section~48.1]{quittner2019superlinear}) that 
 \be{utbound}
 u_t,D^2u\in BC(\bar Q_{\eps,\tau})
 \ee
 (where $BC$ denotes the set of bounded continuous functions).
 Then, setting $g:=f'(u)u_t\in L^\infty(Q_{\eps,\tau})$,
the function  $Z:=u_t$ is the solution of the problem $Z_t-\Delta Z=g$ in $Q_{\eps,\tau}$ with $Z=0$ on $\partial\Omega\times(\eps,\tau)$
  and $Z(\eps)=u_t(\eps)$. By parabolic $L^q$ regularity, we deduce that, for all $1<q<\infty$ and $0<\eps<\tau< T$,
  $$\sup_{x_0\in\Omega} \|Z_t\|_{L^q(Q_{\eps,\tau,x_0})}+\|D^2Z\|_{L^q(Q_{\eps,\tau,x_0})}<\infty,$$
  where $Q_{\eps,\tau,x_0}=(\Omega\cap B_1(x_0))\times(\eps,\tau)$ (the set $Q_{\eps,\tau,x_0}$ can be replaced by $Q_{\eps,\tau}$
  and the supremum over $x_0$ omitted
   in case $\Omega$ is bounded). By interpolation inequalities it follows that
 \be{nablautbound}
 \nabla u_t\in BC(\bar Q_{\eps,\tau}).
 \ee
 We deduce from \eqref{dam11}, \eqref{DwysM1}, \eqref{utbound} and \eqref{nablautbound} that, for all $s_1\in(s_0,\infty)$, 
 \be{eq:estderivative}
D^2w,\ (1+|y|)^{-1}w_s,\ (1+|y|)^{-1}\nabla w_s\in BC(\overline{\mathcal{W}}_{s_1}),
\ee
where  $\mathcal{W}_{s_1}=\mathcal{W}\cap(\R^n\times(s_0,s_1))$.
In view of the exponential decay of $\rho$, this guarantees the convergence and the differentiability of the
various integrals and justifies the integrations by parts in the rest of the proof as well as in Section~\ref{LOT5} below.

\vspace{.2cm}

\noindent{\bf Step 5.} {\it Weighted energy functional and its properties.}
A main advantage of our transformation $w=\log v$, is that the Liapunov functional for equation  \eqref{VraieE} will be given by the    weighted energy  defined as follows: 
\begin{equation}\label{LO2}
\E[w](s):= E[w](s)+C_1 s^{-\gamma}
\end{equation}
where  $\gamma=2\alpha_0-1$, $C_1>0$ is a  constant, and 
$$
E[w]:=\int_{D(s)}\bigg(\frac{1}{2}|\nabla w|^2+F_1(w)\bigg)\rho dy,
$$
with
 $$
 F_1(w)=\begin{cases}
 w-e^w \quad\hspace{3cm}\hbox{if }\ q=1, \\
 \frac{q-1}{2}w^2-\frac{q-1}{2q-1} w^\frac{2q-1}{q-1}\quad\hspace{1cm}\hbox{if }\ q>1. 
 \end{cases}
 $$
  It appear as a perturbation of the weighted energy in the case of a pure exponential/power nonlinearity. Another important point is that \eqref{VraieE} is similar to the corresponding expression in \cite[Formula 4.22]{C25-2} for $q=1$ and \cite[Formula~3.28]{C25-1} in the case of $q>1$, except for the definition of $H$. 
Following the approach in \cite[Lemma~4.5]{C25-2} and adapting it to our context, we obtain the following properties of $\E$. Indeed, since $|y|=R_0e^{s/2}$ for all $y\in \partial D(s)$, the exponential decay of $\rho$ guarantees the required control of all boundary integrals. Moreover, using property \eqref{decayH} together with the exponential decay of $\rho$, we obtain the desired control of the terms involving $H$.

\begin{lem}[Properties of $\E$ for $q\ge1$]\label{ell7}
	For all $s>s_0$, there exists $M_0\ge0$ (independent of $s,y$) such that 
	\begin{equation}\label{jojo1}
	\frac{d}{ds}\E[w](s)\le-\frac{1}{2}\int_{D(s)}w_s^2\rho\le 0
	\end{equation}
	
	and 
	
	\begin{equation}\label{JO1}
	\E[w](s)\ge -M_0.
	\end{equation}
	
\end{lem}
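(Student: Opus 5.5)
We differentiate $E[w](s)$ directly and then absorb the errors into the correction $C_1s^{-\gamma}$. The domain $D(s)=e^{s/2}B_{R_0}$ moves with $s$, so
\[
\frac{d}{ds}E[w](s)=\int_{D(s)}\big(\nabla w\cdot\nabla w_s+F_1'(w)w_s\big)\rho\,dy+\frac{R_0e^{s/2}}{2}\int_{\partial D(s)}\Big(\tfrac12|\nabla w|^2+F_1(w)\Big)\rho\,d\sigma .
\]
We have $F_1'=-f_1$ in both cases $q=1$ and $q>1$. Integrate the gradient term by parts and use \eqref{VraieE} in the form $-\nabla\cdot(\rho\nabla w)=\rho(f_1(w)+H_0-w_s)$. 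This gives
\[
\frac{d}{ds}E=-\int_{D(s)}w_s^2\rho+\int_{D(s)}H_0w_s\rho+\mathcal{B}(s),
\]
where $\mathcal{B}(s)$ collects the moving-domain term and the flux term $\int_{\partial D(s)}w_s\,\partial_\nu w\,\rho$. The integrations by parts are justified by Step~4, namely \eqref{eq:estderivative} and the Gaussian decay of $\rho$.

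\textbf{Boundary terms.} On $\partial D(s)$ we have $|y|=R_0e^{s/2}$, so $\rho\le e^{-R_0^2e^s/4}$.
\begin{itemize}
\item $|\nabla w|\le M$ by \eqref{DwysM1}.
\item $|w|\le M(1+|y|)$ by \eqref{boundedgradu}; for $q>1$, $w$ is bounded above, and the lower bound follows from radial monotonicity together with $w\ge0$.
\item $|F_1(w)|\le C(1+|y|)^{k}$ for some $k$.
\item $|w_s|\le C(1+|y|)$ by \eqref{eq:estderivative}. Local bounds on $[s_0,s_1]$ are not enough here, so this bound has to be taken uniform in $s$. If needed, I would obtain it by expressing $w_s$ through $u_t$ and the type~I bound \eqref{dam1a}, or by the parabolic regularity used in \cite[Lemma~4.5]{C25-2}.
\item $|\partial D(s)|\le Ce^{(n-1)s/2}$.
\end{itemize}
Together these give $|\mathcal{B}(s)|\le Ce^{Cs}e^{-R_0^2e^s/4}\le Cs^{-2\alpha_0}$ for $s\ge s_0$.

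\textbf{The $H_0$ term.} By Young's inequality and \eqref{decayH},
\[
\Big|\int H_0w_s\rho\Big|\le\frac12\int w_s^2\rho+\frac12\int H_0^2\rho\le\frac12\int w_s^2\rho+Cs^{-2\alpha_0}.
\]
Hence $\frac{d}{ds}E\le-\frac12\int_{D(s)}w_s^2\rho+C_2s^{-2\alpha_0}$.

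Since $\gamma=2\alpha_0-1>0$, we have $\frac{d}{ds}(C_1s^{-\gamma})=-C_1\gamma s^{-2\alpha_0}$. Choosing $C_1\gamma\ge C_2$ yields \eqref{jojo1}.

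\textbf{Lower bound \eqref{JO1}.} This is the delicate part. I would follow the blow-up-in-finite-time argument of \cite[Lemma~4.5]{C25-2} and \cite[Formula~3.28]{C25-1}. Suppose $\E[w](s_2)<-M_0$ for some large $s_2$.

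Set $I(s)=\int_{D(s)}w^2\rho$ when $q>1$, and $I(s)=\int_{D(s)}e^{w}\rho$ or $\int w\rho$ when $q=1$. Multiply \eqref{VraieE} by $w$ (respectively integrate it), and use the structure $f_1(w)w-2F_1(w)\ge c|f_1(w)w|$ up to lower-order terms. For $q>1$ this holds because $\frac{q}{q-1}+1>2$; for $q=1$ one uses the analogous exponential inequality. This gives
\[
\tfrac12 I'\ge-2E+c\int(\text{superquadratic part})\rho-\text{(errors)},
\]
where the errors come from $H_0$ and the boundary and are $O(s^{-\alpha_0})$ times $I^{1/2}$ plus exponentially small terms.

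By monotonicity, $E$ stays below $-M_0+Cs^{-\gamma}$ after $s_2$. Jensen's inequality with respect to the probability measure $\rho/\int\rho$ then gives a superlinear differential inequality $I'\ge cI^{r}$ with $r>1$, so $I$ blows up in finite $s$. This contradicts the global-in-$s$ bound $w\le M$ from \eqref{DwysM1}, together with the linear growth bound, which keeps $I$ finite for all $s$.

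The main obstacle is making the $H_0$ cross term $\int H_0 w\rho$ and the negative-$w$ region (for $q=1$, $w$ may be very negative) compatible with Jensen's inequality. I would first try restricting to $\int e^{w}\rho$ for $q=1$, using $w\le M$ so that $e^w$ is bounded, and absorb $H_0$ via \eqref{decayH}.
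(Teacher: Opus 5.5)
Your derivation of \eqref{jojo1} is correct and follows the route the paper indicates. The transport and flux terms on $\partial D(s)$ are killed by $\rho\le e^{-R_0^2e^s/4}$. Young's inequality together with \eqref{decayH} turns $\int H_0w_s\rho$ into $\frac12\int w_s^2\rho+Cs^{-2\alpha_0}$, and $\frac{d}{ds}(C_1s^{-\gamma})=-C_1\gamma s^{-2\alpha_0}$ absorbs that error.

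You are right to flag the uniform bound on $w_s$. It can be closed as follows. The set $\partial D(s)$ is the fixed sphere $|x|=R_0$, and $u$ stays bounded there up to $t=T$ because $0$ is the only blow-up point. Interior parabolic estimates then bound $u_t$ there. Writing $w_s=(T-t)\partial_t W-\frac12 y\cdot\nabla w$ with $W(x,t)=w(y,s)$, you get $|w_s|\le C(1+|y|)$ on $\partial D(s)$ uniformly in $s$. What does the job is this boundedness of $u$ away from the origin; the type~I bound \eqref{dam1a} alone does not control $u_t$ there.

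The gap is in \eqref{JO1}. Your Giga--Kohn type argument (negative energy forcing blow-up of $I$ in finite $s$) is not carried through. The Jensen step, the cross term $\int H_0w\rho$, and for $q=1$ the region where $w$ is very negative are all left open, as you acknowledge.

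More to the point, it is the wrong tool here. That argument is needed when no a priori upper bound on $w$ is available. You, however, close it by appealing to $w\le M$ and the linear lower bound, and these same bounds give \eqref{JO1} in one line. Since $\frac12|\nabla w|^2\ge0$ and $C_1s^{-\gamma}\ge0$, it suffices to bound $F_1(w)$ from below pointwise:
\begin{itemize}
\item For $q>1$, \eqref{dam11} and \eqref{DwysM1} give $0<w\le M$, hence $F_1(w)\ge-\frac{q-1}{2q-1}M^{\frac{2q-1}{q-1}}$.
\item For $q=1$, \eqref{boundedgradu} gives $F_1(w)=w-e^w\ge-M(1+|y|)-e^M$.
\end{itemize}
Integrating against $\rho$ yields $\mathcal{E}[w](s)\ge-\int_{\mathbb{R}^n}\bigl(M(1+|y|)+C_M\bigr)\rho\,dy=:-M_0$ for all $s>s_0$. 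Here $C_M$ is the relevant constant above, so $M_0$ is independent of $s$, as claimed.
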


\vspace{.2cm}

\noindent{\bf Step 6.} {\it Proof of \eqref{eq:qssblowup}.}
By \eqref{jojo1}-\eqref{JO1}, we have
	\begin{equation}\label{ellGw}
	 \ell:=\lim_{s\to\infty} \E[w](s)\in [-M_0,\infty).
	\end{equation}
	  Pick any sequence $s_j\to \infty$ and set $z_j:=w(y,s+s_j)$.
By \eqref{boundedgradu}, we have  $-c(1+|y|)\le z_j(y,s)\le M$ and $|\nabla z_j(y,s)|\le M$, and
	it follows from 
\eqref{VraieE0}, \eqref{decayH} and parabolic estimates that
	the sequence $(z_j)_j$ is precompact in $C^{2,1}_{loc}\big(\mathbb{R}^n\times[0,1]\big)$.
	Consequently, there exists a subsequence (still denoted $s_j$) and a solution $\phi$ of
\begin{equation}\label{Vraie00}
 \phi_s-\Delta \phi +\frac{1}{2}y\cdot\nabla \phi=f_1(\phi) \quad \hbox{in } \mathbb{R}^n\times[0,1]
\end{equation}
such that $w(\cdot,\cdot+s_j)\to \phi$ in $C^{2,1}_{loc}\big(\mathbb{R}^n\times[0,1]\big)$. 
	Moreover 
$|\nabla \phi|$ is bounded in $\mathbb{R}^n\times[0,1]$ and  $-c|y|\le \phi(y,s)\le M$. Let $ K>0$. 
	 Using \eqref{jojo1}, \eqref{ellGw} and the fact that $B_{ K}\subset D(s)$ for all sufficiently large $s$, we deduce that
	\begin{equation*}
	\int_{0}^{1}\int_{B_{ K}}\big(\partial_s z_j\big)^2\rho dy ds\le 
	 \int_{s_j}^{s_j+1}\int_{D(s)}\big(\partial_s w\big)^2\rho dyds
		\le 2\E[w](s_j)-2\E[w](s_j+1)\to 0,\quad j\to \infty.
	\end{equation*}
	By Fatou's lemma, and since $ K>0$ is arbitrary,
	we deduce that $\partial_s \phi\equiv 0$ in $\R^n$. Therefore, by \eqref{Vraie00}, 
	we have $\Delta \phi-\frac12 y\cdot\nabla \phi+f_1(\phi)=0$ in $\mathbb{R}^n$.  
	Moreover, since  $u$ is radially symmetric, so is $\phi$. 
	
	\smallskip
	
\noindent{\bf Step 6.} {\it Nondegeneracy of blow-up for $q=1$.} In dimension $n\le 2$, using the type~I property of $u$ together with \cite[Lemma 3.1, Theorem 3.2]{L89} (see also \cite[Lemma~2.8]{Ryoto}), we obtain $\phi\equiv0$.
In the case where the time monotonicity assumption holds, \cite[Lemma~4.7]{C25-2} implies the type~I blow-up rate for the solution. Passing to the limit, we obtain
$$
\phi(0)\ge 0,\qquad \phi'(|y|)\le 0,\qquad \text{and}\qquad 1+\frac12 y\cdot\nabla\phi\ge0.
$$
Hence, $\phi\equiv0$ by \cite[Lemma~4.6]{C25-2}.	
This concludes the proof of Proposition~\ref{qssblowup} for $q=1$.

\vspace{.2cm}

\noindent{\bf Step 7.} {\it Nondegeneracy of blow-up for $q>1$.} Setting $p=q/(q-1)$ we then have 
 
\be{eqlimw}
\Delta \phi-\frac12y\cdot\nabla \phi+\phi^p-\frac{1}{p-1}\phi=0\quad y\in \R^n.
\ee
On the other hand, using $q>q_S$ or $u_t\ge 0$, it follows by \eqref{belowrate} and \eqref{DwysM1} that
\be{hypothutp}
\begin{aligned}
&\phi(0)\ge (1/(p-1))^{1/(p-1)},\  \phi'(|y|)\le 0\ \hbox{and},\ 1<p<p_S\ \hbox{ or }\ \phi+\frac12 y\cdot\nabla\phi\ge0.
\end{aligned}
\ee
\begin{lemma}\label{lim}
Any classical solution of \eqref{eqlimw}-\eqref{hypothutp} is constant and 
\be{sollim}
\phi\equiv \kappa=(1/(p-1))^{1/(p-1)}=(q-1)^{q-1}.
\ee
\end{lemma}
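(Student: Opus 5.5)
The plan is to split according to the alternative in \eqref{hypothutp}. In both cases I first record the a priori information inherited from the rescaled solution: $\phi$ is radial, $C^2$, nonincreasing in $r=|y|$, and $0\le\phi\le M$ (from $w\ge0$ and \eqref{DwysM1}). Also $\phi(0)\ge\kappa=(p-1)^{-1/(p-1)}>0$, and \eqref{eqlimw} is exactly the Giga--Kohn self-similar equation with exponent $p=q/(q-1)$.

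\emph{Case $1<p<p_S$} (that is, $q>q_S$). I would invoke the Liouville theorem of Giga--Kohn \cite{GK85} for bounded nonnegative solutions of \eqref{eqlimw} in the subcritical range. It states that any such solution is either $\phi\equiv0$ or $\phi\equiv\kappa$. Since $\phi(0)\ge\kappa>0$, the zero solution is excluded, which gives \eqref{sollim}. The only thing to check is boundedness and nonnegativity, and both were already noted above, so this case is immediate.

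\emph{Case $\phi+\frac12y\cdot\nabla\phi\ge0$} (coming from $u_t\ge0$). I argue on the radial ODE
$$
\phi''+\Big(\frac{n-1}{r}-\frac r2\Big)\phi'+\phi^p-\frac{\phi}{p-1}=0,\qquad \phi'(0)=0.
$$
\begin{itemize}
\item Since $\phi$ is monotone and bounded, $L:=\lim_{r\to\infty}\phi(r)$ exists. A standard ODE argument (integrate the equation against $e^{-r^2/4}r^{n-1}$, or use that $\phi'\to0$ and $r\phi'$ is controlled) shows that $L$ is a zero of $\phi^p-\phi/(p-1)$, so $L\in\{0,\kappa\}$.
\item Since $\phi$ is nonincreasing and $\phi(0)\ge\kappa$, if $L=\kappa$ then $\phi\ge\kappa$ everywhere.
\item I then exploit the monotonicity quantity $z:=\phi+\frac r2\phi'\ge0$. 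Differentiating \eqref{eqlimw} along the dilation $\lambda\mapsto\phi(\lambda y)$ shows that $z$ satisfies a linear equation of the form $\mathcal L_\phi z=c\,z$, with $\mathcal L_\phi=\Delta-\frac12y\cdot\nabla+p\phi^{p-1}-\frac1{p-1}$ and an explicit constant $c$. The linearization $\mathcal L_\phi$ is self-adjoint in $L^2_\rho$, and $\phi$ itself satisfies $\mathcal L_\phi\phi=(p-1)\phi^p$.
\item Testing these two relations against each other in $L^2_\rho$, with the boundary terms justified by boundedness of $\phi$ and the decay of $\rho$, gives an identity of the type $\int z\,\phi\,\big((p-1)\phi^{p-1}-1\big)\rho=0$.
\end{itemize}

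Combining that identity with $z\ge0$ and the sign information on $\phi-\kappa$ should force $z\equiv0$ or $\phi\equiv\kappa$. If $z\equiv0$, then $\phi=Cr^{-2}$, which is incompatible with regularity at $0$ unless $C=0$, and that contradicts $\phi(0)\ge\kappa$.

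The hard step is this last sign argument in the monotone case. If $\phi$ crosses $\kappa$ (i.e.\ $L=0$), the factor $(p-1)\phi^{p-1}-1$ changes sign, so the integral identity alone does not conclude. My first attempt would be a Sturm-type comparison. I would compare $z$ with $\phi'$, which satisfies the derivative of the equation, on the interval $[0,r_0]$ where $\phi\ge\kappa$, in order to show that $z$ must vanish somewhere. Once $z$ vanishes at some point, the strong maximum principle for the linear equation satisfied by $z\ge0$ gives $z\equiv0$. If this becomes too delicate, I would instead follow the argument of \cite[Lemma~4.6]{C25-2} and its power-case analogue in \cite{C25-1,CS25}, where the same limiting problem arises under the same hypotheses, and simply verify that their assumptions are met here.
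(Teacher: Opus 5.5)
Your treatment of the case $1<p<p_S$ matches the paper's: the Pohozaev-based Liouville theorem of \cite{GK85} together with $\phi(0)\ge\kappa$. The monotone case has a genuine gap, exactly at the step you flag.

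First a correction. For $z=\phi+\frac r2\phi'$ one finds $\mathcal L_\phi z=\psi+(p-2)\phi^p$, which for $p\ne2$ is not of the form $cz$ in general. The derivative at $\lambda=1$ of $\lambda^{2/(p-1)}\phi(\lambda y)$ is $2\psi$, where $\psi:=\frac1{p-1}\phi+\frac r2\phi'=(q-1)\phi+\frac r2\phi'$, and it is $\psi$ that satisfies $\mathcal L_\phi\psi=\psi$. This is also the sign condition that $u_t\ge0$ actually yields, since $u_t/f(u)=w^{-q/(q-1)}\big((q-1)w+w_s+\frac12y\cdot\nabla w\big)$.

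Next, a solution with $\phi(0)>\kappa$ can never stay $\ge\kappa$. Otherwise $f_1(\phi)=\phi^p-\frac{\phi}{p-1}\ge0$, strictly for small $r$. Then $(\mu\phi')'=-\mu f_1(\phi)$ with $\mu=r^{n-1}e^{-r^2/4}$ gives $\phi'(r)\le -c\,r^{1-n}e^{r^2/4}$ for $r\ge1$, so $\phi\to-\infty$. Hence your subcase $L=\kappa$ reduces to $\phi\equiv\kappa$, and the crossing case is the whole content of the lemma.

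Your identity is just $(\psi,(\mathcal L_\phi-1)\phi)=0$, a consequence of self-adjointness alone. Using $\psi\ge0$ only pointwise against the sign of $(p-1)\phi^{p-1}-1$ cannot settle the crossing case. The Sturm comparison is only sketched, and it is not clear it can work: set $V:=p\phi^{p-1}-\frac1{p-1}$; the potentials of the radial equations for $\psi$ and $\phi'$ are $V-1$ and $V-\frac12-\frac{n-1}{r^2}$, and these are not ordered in the direction that would force a zero of $\psi$. Deferring to \cite[Lemma~4.6]{C25-2}, which concerns the exponential limit problem, is not an argument.

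The missing idea is to use the positivity of $\psi$ spectrally rather than pointwise.
\begin{itemize}
\item Since $\psi\ge0$ and $\psi(0)=\frac{\phi(0)}{p-1}>0$, $\psi$ is a nonnegative nontrivial eigenfunction of the self-adjoint operator $\mathcal L_\phi$ on $L^2_\rho$ (bounded potential, compact resolvent).
\item It therefore cannot be orthogonal to the positive principal eigenfunction, so $1$ is the top eigenvalue: $-\int|\nabla v|^2\rho+\int Vv^2\rho\le\int v^2\rho$ for all $v\in H^1_\rho$.
\item Take $v=\phi$ and use twice the identity $\int|\nabla\phi|^2\rho+\frac1{p-1}\int\phi^2\rho=\int\phi^{p+1}\rho$, obtained by testing \eqref{eqlimw} with $\phi\rho$. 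The inequality becomes $(p-1)\int\phi^{p+1}\rho\le\int\phi^2\rho$, i.e.\ $(p-1)\int|\nabla\phi|^2\rho\le0$.
\end{itemize}
So $\phi$ is constant, hence $\phi\equiv\kappa$, and no information on the sign of $\phi-\kappa$ is needed.

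For comparison, the paper argues by ODE. Assuming $\phi(0)>\kappa$, it asserts ``by uniqueness'' that $\phi>\kappa$ on $(0,\infty)$. It then derives $\phi'\le-\frac A2r^{1-n}e^{r^2/4}$ for large $r$, contradicting $\phi+\frac r2\phi'\ge0$. By the computation above, that intermediate assertion fails for every solution with $\phi(0)>\kappa$. So the paper's shortcut does not address the crossing case either, and it cannot be used to close your gap.
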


\begin{proof}[Proof of Lemma~\ref{lim}]
If $1<p<p_S$, the Pohozaev identity \cite[Proposition~2]{GK85} implies that the solution is constant. Therefore, $\phi$ satisfies \eqref{sollim}, due to $\phi(0)\ge\kappa$.
It remains to prove the result in the case $u_t\ge0$ for $q>1$, i.e.,
$$
g(r):=\phi(r)+\frac12 r\phi'(r)\ge0,\quad\hbox{ for any }\ r=|y|\ge0.
$$
Suppose, by contradiction, that $\phi(0)>\kappa$. By uniqueness, we then have $\phi>\kappa$ on $(0,\infty)$, and hence
$$
0<f_1(\phi)\le C(p).
$$
Set 
$$
\mu(r)=r^{n-1}e^{-r^2/4}.
$$
 Then the equation  in \eqref{eqlimw} can be written as
$$
(\mu \phi')'(r)=-\mu (r)f_1(\phi(r)).
$$
By integrating, we get 
$$
\mu(r) \phi'(r)=-\int_0^r\mu (s)f_1(\phi(s))ds.
$$
Now using the fact that 
$$
\lim_{r\to \infty}\int_0^r\mu (s)f_1(\phi(s))ds=\int_0^\infty \mu(s)f_1(\phi(s))ds=:A\in (0,\infty);
$$
there exists $R_0>0$ such that 
$$
\phi'(r)\le -\frac{A}{2\mu(r)}=-\frac{Ae^{r^2/4}}{2r^{n-1}}\quad\hbox{for all } r\ge R_0.
$$
This along with the definition of g yields
$$
0\le g(r)=\phi(r)+\frac{r}{2}\phi'(r)\le \phi(0)-\frac{Ae^{r^2/4}}{4r^{n-2}}\underset{r\to \infty}{\to} -\infty.
$$
This yields the contradiction, hence $\phi\equiv \kappa$.
\end{proof}

\section{Proof of Theorem \ref{theo}} \label{LOT5}

Recalling definition \eqref{defL2H1}, we respectively denote  by $(v,w):=\int_{\mathbb{R}^n}vw\rho dy$ and $\| v\|=(v,v)^{\frac{1}{2}}$ the inner product and the norm of the Hilbert space $L^2_\rho$. 
The following proposition follows from the argument in \cite[Section~6]{CS25}.

\begin{prop}\label{TheoremBreduit}
Let $z$ be a radially symmetric  nonincreasing solution of  
$$
\partial_s z+\mathcal{L}z-z = \mathcal{N}(z,s)+\tau(s)z+B(y,s)\quad \hbox{in $\ \R^n\times(s_1,\infty)$,}$$ 
 where 		
$$\tau(s)=o\Bigl(\frac{1}{s}\Bigr),\quad s\to\infty,$$

$$		|B|+|\nabla B|\le Ce^{-s},\quad |\mathcal{N}(\xi,s)| \le C\xi^2,  \quad
		|\partial_\xi \mathcal{N}(\xi,s)|\le C|\xi|,\quad s\ge s_1,$$
		and 
		$$V(s):=\lim_{\xi\to 0}\frac{\mathcal{N}(\xi,s)}{\xi^2}=k+K(s),\quad\hbox{with $k>0$ and } \lim_{s\to \infty}K(s)=0.$$
		We also assume that there exists $M_0> 0$ such that 
		\begin{equation*}
	 |z(y,s)| \le M_0(1+|y|),\quad  |\nabla z|\le M_0 \quad\hbox{ and }\quad 
	\lim_{s\to \infty} z(y,s)=0\ \text{uniformly for } y \text{ bounded,} 
	\end{equation*}
	and that
	$$
	\|z(.,s)\| \geq cs^{-1}, \quad  s\ge  s_1.  
	$$
	Then 
	\begin{equation}\label{control1-2-1}
	z(y,s)=\frac{ 2n-|y|^2}{8ks}+o\Bigl(\frac{1}{s}\Bigl)\quad \text{as } s\to \infty,
	\end{equation}
	with convergence in $H^1_\rho(\mathbb{R}^n)$ and uniformly for $y$ bounded.
\end{prop}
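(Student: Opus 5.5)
The plan is the classical center-manifold-type analysis for radial solutions in $L^2_\rho$, in the simplified form of \cite{S19} as carried out in \cite[Section~6]{CS25}.

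\textbf{Spectral setup.} Here $\mathcal{L}=-\Delta+\frac12 y\cdot\nabla$ is self-adjoint on $L^2_\rho$, with eigenvalues $k/2$, $k\in\N$. On radial functions the eigenfunctions are Hermite-type polynomials $h_0=1$, $h_2=|y|^2-2n$, and so on. For $\mathcal{L}-1$ the mode $h_0$ is unstable (eigenvalue $-1$), $h_2$ is neutral (eigenvalue $0$), and all higher radial modes are stable (eigenvalues $\ge 1$). Write
$$z=a_0(s)h_0+a_2(s)h_2+z_-,$$
with $z_-$ in the stable subspace.

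\textbf{Projected system.} Projecting the equation gives:
$$a_0'-a_0=(\mathcal N+\tau z+B,h_0)/\|h_0\|^2,\qquad a_2'=(\mathcal N+\tau z+B,h_2)/\|h_2\|^2,$$
and an energy inequality
$$\frac{d}{ds}\|z_-\|^2+2\|z_-\|^2\le 2\,|(\mathcal N+\tau z+B,z_-)|.$$

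\textbf{Bounding the nonlinear term.} The main technical point is to show $\|\mathcal N(z,s)\|\le \eps(s)\|z\|$ with $\eps(s)\to 0$, and in fact a quadratic bound $\|\mathcal N\|\le C\|z\|^2+\ldots$. Since $|\mathcal N|\le Cz^2$ and $|z|\le M_0(1+|y|)$ with $z\to 0$ locally uniformly, splitting $\R^n$ into $|y|\le K$ and $|y|>K$ and using the Gaussian weight gives $\|\mathcal N\|=o(\|z\|)$. To get the genuinely quadratic form $\int z^4\rho\le C\|z\|^2\big(\|z\|^2+\|\nabla z\|^2\big)$-type estimates, the plan is to use the weighted Sobolev/Poincar\'e inequalities and smoothing of the linear semigroup on $L^2_\rho$ (delay regularizing estimates, as in \cite{filippas1992refined,S19}). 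The terms $\tau z=o(\|z\|/s)$ and $B=O(e^{-s})$ are harmless.

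\textbf{Excluding the other modes.} A standard ODE lemma (Filippas--Kohn) then shows that either the unstable or stable modes dominate, or the neutral mode dominates. Since $z\to 0$, $a_0$ cannot dominate. If $z_-$ dominated, $\|z\|$ would decay exponentially, contradicting $\|z\|\ge c s^{-1}$. Hence
$$|a_0|+\|z_-\|=o(|a_2|).$$
This is where the lower bound $\|z(\cdot,s)\|\ge cs^{-1}$ and radial symmetry (which removes the nonradial neutral modes) are essential.

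\textbf{Neutral-mode ODE.} Using $\mathcal N(\xi,s)=V(s)\xi^2+o(\xi^2)$ uniformly for small $\xi$, together with $z\approx a_2h_2$ locally and tail control, we get
$$(\mathcal N,h_2)=(k+o(1))\,a_2^2\,(h_2^2,h_2)+o(a_2^2).$$
With $(h_2^2,h_2)/\|h_2\|^2=8$ one obtains
$$a_2'=8k\,a_2^2\,(1+o(1)).$$
Integrating gives $a_2(s)=-\frac{1}{8ks}(1+o(1))$; the sign is forced since $a_2\to 0$. Hence
$$z=\frac{2n-|y|^2}{8ks}+o(1/s)\quad\text{in }L^2_\rho.$$

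\textbf{Upgrading the convergence.} The $H^1_\rho$ convergence follows by applying parabolic smoothing to the equation for the difference. Uniform convergence for bounded $y$ follows from local parabolic estimates together with the gradient bound $|\nabla z|\le M_0$.

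\textbf{Main obstacle.} The hardest step is the quadratic estimate of $\mathcal N$ in $L^2_\rho$ needed to close the neutral-mode ODE with $o(a_2^2)$ errors. The plan is to follow \cite[Section~6]{CS25} verbatim here, since the hypotheses stated are exactly the ones used there.
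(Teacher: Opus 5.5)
Your overall route (radial decomposition, the Filippas--Kohn trichotomy, exclusion of the stable-dominant case via $\|z\|\ge cs^{-1}$, and the neutral-mode ODE $a_2'=8ka_2^2(1+o(1))$ with $a_2<0$) is the one the paper relies on, since it simply invokes \cite[Section~6]{CS25}. Your spectral bookkeeping, the value $(h_2^2,h_2)/\|h_2\|^2=8$, the sign argument and the integration are all correct. The bound $\|\mathcal N(z,s)\|=o(\|z\|)$ needed for the trichotomy is also true, but not for the reason you give. The growth bound $|z|\le M_0(1+|y|)$ and the Gaussian weight make $\int_{|y|>K}z^4\rho$ small only in absolute terms, not relative to $\|z\|^2\to0$. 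What works is the gradient bound: the weighted Poincar\'e inequality $\int|y|^2v^2\rho\le C\int(v^2+|\nabla v|^2)\rho$ applied to $v=z^2$ gives $\int_{|y|>K}z^4\rho\le CK^{-2}\bigl(\int z^4\rho+M_0^2\|z\|^2\bigr)$. Combined with $\sup_{|y|\le K}|z|\to0$, this gives $\int z^4\rho=o(\|z\|^2)$, so the trichotomy step is sound.

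The genuine gap is the step you call ``$z\approx a_2h_2$ locally and tail control''. Because $h_2$ grows like $|y|^2$, the identity $(\mathcal N,h_2)=(k+o(1))a_2^2(h_2^2,h_2)+o(a_2^2)$ needs weighted bounds such as
\[
\int (z-a_2h_2)^2(1+|y|^2)\rho=o(a_2^2),\qquad \int z^2(1+|y|^2)\rho=O(a_2^2).
\]
That is, you need dominance of the neutral mode in $H^1_\rho$, whereas the trichotomy only gives it in $L^2_\rho$. Static information cannot supply this. Add to $a_2h_2$ (modified far out, where $\rho$ kills everything) a nonincreasing remainder $r$ that vanishes for $|y|<R(s)$ and has slope $-1$ beyond, with $R(s)\approx\sqrt{8\log s}$ chosen so that $\|r\|\approx|a_2|/R(s)=o(|a_2|)$. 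This respects monotonicity, $|\nabla z|\le M_0$, $|z|\le M_0(1+|y|)$, local uniform convergence, $\|z\|\ge cs^{-1}$ and $L^2_\rho$-dominance of $a_2$. Yet $(r^2,h_2)\approx R(s)^2\|r\|^2\approx a_2^2$, which is of order $a_2^2$ rather than $o(a_2^2)$ and shifts the coefficient of the ODE. So the missing estimate must come from the equation.

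The natural mechanism is to run the same analysis on $\nabla z$. This is evidently what the hypotheses $|\partial_\xi\mathcal N|\le C|\xi|$ and $|\nabla B|\le Ce^{-s}$, which your outline never uses, are designed for; it also matches the $H^1_\rho$ convergence in \eqref{control1-2-1} and the $|\nabla R|^2$ term in Lemma~\ref{controlRL}. The gradient solves
\[
(\nabla z)_s+\bigl(\mathcal L-\tfrac12\bigr)\nabla z=\partial_\xi\mathcal N(z,s)\nabla z+\tau(s)\nabla z+\nabla B .
\]
In the radial class, $\mathcal L-\frac12$ has eigenvalue $m-1$ on $\nabla h_{2m}$, $m\ge1$. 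So there is no unstable direction (since $\nabla h_0=0$), only the neutral direction $\nabla h_2$, and a spectral gap on the rest. The dissipation term $\|D^2z\|^2$ is available to absorb the weighted tails of $\partial_\xi\mathcal N\,\nabla z$. An energy estimate then gives $\|\nabla(z-a_2h_2)\|=o(|a_2|)$, and weighted Poincar\'e turns this into the bounds displayed above. Delayed regularizing estimates could in principle supply the same control, but you aim them at an $\int z^4\rho$-type bound, which is not what is missing.

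A minor point: you also use that $\mathcal N(\xi,s)/\xi^2\to V(s)$ uniformly in $s$. The statement does not literally provide this, though it holds in the application, where $\mathcal N$ is independent of $s$.
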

We shall apply Proposition~\ref{TheoremBreduit} for a suitable new function $B$ arising from our transformation. To this end, we use the same notation as in \eqref{dam111}-\eqref{dam11}.

\subsection{Nonexponential decay property}

 As in the previous works \cite{C25-2,CS25,S19}, we first provide a 
 lower bound on the decay of $w$ in $L^{2}_{\rho}$, 
 which is one of the key assumptions in Proposition \ref{TheoremBreduit}, as consequence of a lower bound on $|u_r|$ (cf.~\eqref{Rappelupper}).

\begin{lemma}\label{lem}
	Let $\kappa=(q-1)^{q-1}$ with $\kappa=0$ if $q=1$. Under the assumptions of Theorem~\ref{theo}, there  exists $c > 0$ 
	such that
	\be{lowerdecayw}
	\|\kappa-w(.,s)\| \geq cs^{-1}, \quad  s\ge  s_1.  
	\ee
	
\end{lemma}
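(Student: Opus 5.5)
We will deduce \eqref{lowerdecayw} from the gradient lower bound \eqref{Rappelupper} of Theorem~\ref{rappelH}, in the spirit of \cite{S19,CS25,C25-2}. The idea is that \eqref{Rappelupper} forces $w(\cdot,s)$ to have a definite spatial variation in $y$ at a scale of order $1/s$. Since $\kappa$ is constant, $\kappa-w$ then cannot be much smaller than $1/s$ in $L^2_\rho$.

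\textbf{Step 1: transfer \eqref{Rappelupper} to $w$.} We compute $\nabla_y w$ from \eqref{dam11}.

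For $q=1$ we have $w=\log(T-t)-\log G(u)$, hence
$$\partial_r w=\sqrt{T-t}\,\frac{u_r}{f(u)G(u)} .$$
Since $u_r\le 0$, \eqref{Rappelupper} gives
$$-\partial_{|y|}w\ \ge\ \frac{|y|(T-t)}{2G(u)(A+\log f(u))}=\frac{|y|\,e^{w}}{2(A+\log f(u))}.$$

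For $q>1$ the analogous formula (the identity used in Step~3 of Section~\ref{GK}) gives
$$-\partial_{|y|}w\ge \frac{(q-1)\,|y|\,w\,(T-t)}{2G(u)(A+\log f(u))}.$$

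On a fixed ball $|y|\le 2$, Proposition~\ref{qssblowup} gives $w\to\kappa$ uniformly, and hence $G(u)\sim T-t=e^{-s}$ there. By \eqref{propFf}, $\log f(u)\sim q|\log G(u)|\sim qs$. Therefore there is $c_0>0$ such that
$$-\partial_{|y|}w(y,s)\ \ge\ \frac{c_0|y|}{s}\quad\text{for } |y|\le 2,\ s\ge s_1.$$

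\textbf{Step 2: from gradient to $L^2_\rho$ norm.} Set $z=\kappa-w$, which is radially nondecreasing in $|y|$. By Step~1, for $1\le|y|\le2$,
$$z(y,s)-z(0,s)\ge \frac{c_0}{2s}.$$
There are two cases.

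If $z(0,s)\ge -\frac{c_0}{4s}$, then $z\ge \frac{c_0}{4s}$ on the annulus $1\le|y|\le 2$.

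Otherwise $z(0,s)<-\frac{c_0}{4s}$. Integrating the gradient bound from $0$ shows that $z(y,s)\le z(0,s)+\frac{c_0}{16 s}$ only fails on a set where the bound pushes $z$ up. More simply, by monotonicity $z\le z(0,s)\cdot$ fails to be informative, so instead we use that $z(y,s)\le z(0,s)+o(1/s)$ near $0$: the Lipschitz bound from Lemma~\ref{grad-estim-1} alone is too weak. We therefore split at the point $r_0$ where $z$ changes sign. On one of the intervals $[0,1/2]$ or $[1,2]$ we have $|z|\ge \frac{c}{s}$, because over $[1/2,1]$ the function $z$ increases by at least $\frac{c_0}{8s}$. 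Both regions have positive $\rho$-measure, so in either case $\|z(\cdot,s)\|\ge c s^{-1}$, which is \eqref{lowerdecayw}.

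\textbf{Main obstacle.} The delicate point is Step~1. It requires:
\begin{itemize}
\item the region $|y|\le2$, i.e.\ $|x|\le 2\sqrt{T-t}$, to lie inside $Q$, meaning $u\ge M$ there; this follows from Lemma~\ref{noneedle} and monotonicity after possibly shrinking the range;
\item the identification of the denominator $A+\log f(u)$ with $qs(1+o(1))$, uniformly in $|y|\le 2$.
\end{itemize}
The second point uses Proposition~\ref{qssblowup} (in the form $G(u)=(1+o(1))e^{-s}$ uniformly on bounded $y$, exactly as in \eqref{0J1}) combined with \eqref{propFf}. For $q>1$ we also need $w$ bounded below by $\kappa/2$ on $|y|\le 2$, which again comes from the uniform convergence $w\to\kappa$.
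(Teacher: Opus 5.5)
Your argument is correct and is essentially the paper's proof. You convert \eqref{Rappelupper} into $-\partial_{|y|}w\ge c_0|y|/s$ on a fixed ball, bounding $A+\log f(u)$ by $O(s)$ directly through $G(u)\sim e^{-s}$ and \eqref{propFf}, and then conclude by a monotonicity dichotomy, just as the paper splits according to whether $\varphi(1,s)\ge -1/s$ or not. The middle of your Case~2 is muddled: the bound $z\le z(0,s)+o(1/s)$ near $0$ is unjustified and the point $r_0$ is never used, so delete it together with the case split on $z(0,s)$. Your final dichotomy is complete on its own: either $z(\tfrac12,s)\le -\tfrac{c_0}{16s}$, whence $|z|\ge\tfrac{c_0}{16s}$ on $B_{1/2}$ by monotonicity, or else $z\ge\tfrac{c_0}{16s}$ on $1\le|y|\le2$.
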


\begin{proof} Here we deal with the case $q=1$. The case of $q>1$ follows by the same approach, see \cite[Lemma~6.2]{CS25}.
	
	Fix $ K > 0$ and $\eps>0$. By \eqref{eq:qssblowup} there exists $t_0(K,\eps) \in (T/2, T)$ such that  
	$$
M\le (1-\eps)\psi(t)\le	u(r,t) \le (1+\eps) \psi(t), \quad 0\leq r \leq K\sqrt{T-t}, \ \ t_0 < t < T,
	$$
	where $M$ is as in   \eqref{Rappelupper}.  
	Then, from \eqref{Rappelupper},  $u= u(r,t)$ with  $r=| x |,$ satisfies (see also~\cite[formula $(4.22)$]{CS25})
	\begin{equation}\label{(5.10)}
	-u_r(r,t) \geq\frac{rf(u)}{2(A +\log f(u)) },\quad  0 \leq r \leq K\sqrt{T-t}, \ \ t_0 < t < T.
	\end{equation}
	 We write $w =w (\eta, s)=\log v$ with $\eta =  |y|$ and $\varphi=-w$.
	 Using the properties of $L$, we deduce that, for all 
	$s_2 = s_2(K)\ge  \max(s_0,1) $ sufficiently large, there exists  $c_1=c_1(\eps)>0$,
	\be{roi}
	\begin{aligned}
		\varphi_\eta(\eta, s) &:= -w_\eta (\eta, s) =-\frac{v_\eta}{v} =-\sqrt{T-t} \,\frac{ u_r(\eta \sqrt{T-t},t)}{f(u)G(u)} \\
		&\ge \frac{ \eta(T-t)}{2\bigl(A + \log f(u)\bigr)G(u)}\ge C
		\frac{ \eta}{A + \log f(\psi(t))}\times \frac{ T-t}{G(u)}\ge C \frac{\eta}{\log f(\psi(t))}.
	\end{aligned}
	\ee
	 Using \eqref{propFf}, we have $\log(f(\psi(t)))\le \log f(G^{-1}(T-t))\le C|\log(T-t)|$ as $t\to T$.
		This combined with  \eqref{roi} implies
	\begin{equation*}
	\varphi_\eta(\eta, s)\ge \frac{c\eta}{s}\quad\hbox{for all $0 \leq \eta \leq K$ and $s > s_2$},
	\end{equation*}
	where the constant $c > 0$ is independent of $K$.
	
	Now, choose $K = 2(1 + c^{-1})$ and take any $s \ge s_2(K).$ If $ \varphi(1,s) \geq -1/s$ then it follows that 
	\begin{equation*} \varphi(\eta, s) = \varphi(1,s ) + \int_{1}^{\eta} \varphi_\eta(z,s)dz \geq \frac{-1 + c(\eta -1)}{s} \geq \frac{1}{s}, \ \ K-1 \leq \eta \leq K, \end{equation*}
	and hence $\|\varphi(\cdot,s)\| \geq \bigl(\int_{K-1\leq | y | \leq K} \rho\bigr)^{1/2}s^{-1}$. Otherwise, we have $\varphi(1,s) \leq -1/s$ and, since $\varphi$ is a nondecreasing function of $\eta$, we get $\varphi(\eta, s) \leq -1/s$ for $\eta \in [0,1];$ hence $\|\varphi (\cdot,s)\| \geq \bigl(\int_{| y | \leq 1} \rho\bigr)^{1/2}s^{-1}.$ We conclude that $\|\varphi (\cdot,s)\| \geq cs^{-1}$ for all $s \geq s_2.$
\end{proof}

\subsection{ Proof of Theorem~\ref{theo}}
In this case, we need another change of function as follows:
\be{defphi}
\varphi(y,s)=\kappa-w(y,s).
\ee
Hence the conclusion of Theorem~\ref{theo} becomes
$$
\varphi(y,s)=\beta(q)\frac{|y|^2-2n+o(1)}{4s} \quad\hbox{as }\ s\to \infty,\quad \beta(q)=\begin{cases}1\quad\hbox{if }\ q=1,\\\frac{(q-1)^q}{q}\quad\hbox{if }\ q>1;\end{cases}
$$
with convergence in $H_\rho^1(\R^n)$ and uniformly for bounded $y$.
To this end, we follow the same argument as in previous work \cite{CS25}.

\begin{proof}
Integrals over $\mathbb{R}^n$ will be simply denoted by $\int$, and the variables will be omitted when no confusion arises.

\noindent\textit{\textbf{Step 1.} Bounds and equation for $\varphi$.} 
The function
 $\varphi $ is radially symmetric nonincreasing, and
\be{boundvarphiq}
 |\nabla \varphi|\le M_0,\quad\hbox{and }\quad \begin{cases}	 - M_0\le w(y,s)\le  M_0(1+|y|)\quad\hbox{if }\ q=1,\\ -M_0\le \varphi(y,s)|\le  \kappa,\quad\hbox{if }\ q>1;
 \end{cases}
\ee
and  by Proposition~\ref{qssblowup},
\be{(1.9)q}
\lim_{s\to \infty} \varphi(y,s)=0\quad \text{uniformly for } y \text{ bounded.}
\ee
Therefore, for each $R > 0$, we have
\begin{equation}\label{(5.13)q} 
m_R(s):= \underset{| y | \leq R}\sup |\varphi(y,s)| \to 0\quad \text{as}\ \ s\to \infty,\end{equation}
and 
	\be{varphi0q}
\underset{s\to\infty}{\lim}\| \varphi(\cdot,s) \| = 0,
\ee
as a consequence of \eqref{(1.9)q} and dominated convergence. 
Moreover, by recalling \eqref{VraieE0}, we can rewrite the equation in the following form:
 \be{(5.11)q} 
 \varphi_s + \mathcal{L}\varphi-\varphi =\mathcal{N}(\varphi) - R(\varphi),
 \quad
y\in\R^n,\ s\ge s_1,
\ee
with
\be{defWTq}
\mathcal{N}(\xi):=-f_1(\kappa-\xi) -\xi, \quad R(\varphi)=H_0(\kappa-\varphi).
\ee
 We note that 
\be{(5.14)q} 
|\mathcal{N}(\varphi)| \le C\varphi^2,  \quad
|\partial_\xi \mathcal{N}(\varphi)|\le C|\varphi|,\quad y\in\R^n,\ s\ge s_1
\ee
and 
\be{cvFVq}
\lim_{\xi\to 0}\frac{\mathcal{N}(\xi)}{\xi^2}=-\frac12\beta(q).
\ee
This is a consequence of Taylor's formula with integral remainder:
\be{TaylorRIq} 
\mathcal{N}(\xi)=-
\xi^2\int^{1}_{0} \partial_{\xi}^2 f_1(\kappa-t\xi)(1-t)dt.
\ee

\noindent\textit{\textbf{Step 2.} Control of  $R(\varphi)$}.
A key new fact is the following property of $R(\varphi)$, which follows from \eqref{qlim}, \eqref{DwysM1}-\eqref{boundedgradu} and  \eqref{eq:estderivative}.

\begin{lemma}\label{controlRL}
 We have
\be{boundRL}
\int \bigl(R^2(\varphi)(1+|y|^4)+|R(\varphi)|+|\nabla R(\varphi)|^2\bigr)(y,s) \rho\le \eps(s)\int|\nabla \varphi|^2\rho\quad\hbox{as }s\to \infty,
\ee
where $\lim_{s\to \infty}\eps(s)=0$.
\end{lemma}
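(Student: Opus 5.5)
The plan is to exploit the explicit structure of $R(\varphi)=H_0$. We have $R=|\nabla w|^2\,\theta(u)$ with $\theta:=f'(u)G(u)-1$ if $q=1$, and $R=|\nabla w|^2\,\theta(u)/((q-1)w)$ with $\theta:=f'(u)G(u)-q$ if $q>1$. Since $\nabla w=-\nabla\varphi$, the bound reduces to showing that the coefficient multiplying $|\nabla\varphi|^2$ is small in the appropriate weighted sense.

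\textbf{Split the domain.} Fix $K>0$ large and split $\mathbb{R}^n$ (more precisely $D(s)$, where the equation lives; outside it $\rho$ is super-exponentially small and everything is bounded) into $\{|y|\le K\}$ and $\{|y|>K\}$.

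On $\{|y|\le K\}$ we use $\varphi\to0$ uniformly (by \eqref{(1.9)q}). Hence $w$ is close to $\kappa$, so for $q>1$ the denominator $(q-1)w$ stays bounded away from $0$. Also $u\ge\frac12G^{-1}(T-t)\to\infty$, and by \eqref{qlim} we get $|\theta(u)|\le C|\log G(u)|^{-\alpha_0}\le Cs^{-\alpha_0}$. With $|\nabla\varphi|\le M_0$ this gives
$$|R|+R^2(1+|y|^4)\le C(K)s^{-\alpha_0}|\nabla\varphi|^2 .$$

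On $\{|y|>K\}$ we only have $|\theta(u)|$ bounded, since $u$ may no longer be large there. We still have $|R|\le C|\nabla\varphi|^2$, and for $q>1$ we must also control $1/w$ from below. I would use the no-needle/monotonicity information (Lemma~\ref{noneedle} and radial decrease) inside $D(s)$, together with $w\ge0$ from $u\ge0$. If that fails, I would replace $\{|y|>K\}$ by the set where $u\ge M$ and handle the complement by exponential smallness of $\rho$. The tail is the delicate part. Rather than $\int_{|y|>K}|\nabla\varphi|^2\rho$ being small relative to $\int|\nabla\varphi|^2\rho$ by fiat, I would use radial monotonicity of $\varphi$ and the lower bound on $|\nabla\varphi|$ near $|y|\sim1$ from \eqref{roi} ($\varphi_\eta\ge c\eta/s$). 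This gives $\int|\nabla\varphi|^2\rho\ge c s^{-2}$. Since $|\nabla\varphi|\le M_0$ only bounds the tail by $C e^{-K^2/8}$, this is not enough directly. Instead I would bound the tail via the weighted Poincaré/energy structure: $\int_{|y|>K}|\nabla\varphi|^2\rho\le \frac{C}{K^2}\int|\nabla\varphi|^2|y|^2\rho$, and control the latter by $C\int|\nabla\varphi|^2\rho$ plus lower-order terms through the standard $L^2_\rho$ inequality $\int |y|^2v^2\rho\le C\int(v^2+|\nabla v|^2)\rho$ applied to $v=\nabla\varphi$. That in turn needs $D^2\varphi$ in $L^2_\rho$, available from \eqref{eq:estderivative} and parabolic smoothing of \eqref{(5.11)q} in the form $\|D^2\varphi\|\le C\|\nabla\varphi\|$ up to $o$-terms. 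I expect this to be the main obstacle.

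\textbf{The $|\nabla R|^2$ term.} Differentiating gives $\nabla R=2\theta D^2w\nabla w+|\nabla w|^2\theta'(u)\nabla u+\dots$. Here $\theta'(u)\nabla u$ is controlled through $\nabla u=f(u)G(u)(T-t)^{-1/2}\nabla w/(\dots)$ together with the assumption that $f$ is $C^2$. Bounds of the form $|G\,f''|/f'$ are $O(1)$ by convexity of $f/\log f$. So $|\nabla R|^2\le \eps(s)(|D^2\varphi|^2+|\nabla\varphi|^2)$, and the $D^2\varphi$ term is handled by the same smoothing estimate as above.

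Finally choose $K=K(s)\to\infty$ slowly, so that $C(K)s^{-\alpha_0}\to0$, which gives \eqref{boundRL}.
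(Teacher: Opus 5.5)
Your inner-region estimate is the same as the paper's and is fine. The gap is the tail $\{|y|>K\}$. There you need $\int_{|y|>K}|\nabla\varphi|^2\rho\le\eps(K)\int|\nabla\varphi|^2\rho$ uniformly in $s$. Through $\int|y|^2v^2\rho\le C\int(v^2+|\nabla v|^2)\rho$ you reduce this to a same-time bound $\|D^2\varphi(s)\|\le C\|\nabla\varphi(s)\|$, which you do not prove.

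Parabolic smoothing does not give it, because smoothing bounds $D^2\varphi(s)$ by norms of $\varphi$ at earlier times. Indeed $\|\mathcal{L}\varphi\|^2=\|D^2\varphi\|^2+\frac12\|\nabla\varphi\|^2$, and $\mathcal{L}\varphi=\varphi-\varphi_s+\mathcal{N}(\varphi)-R(\varphi)$ by \eqref{(5.11)q}. So what you would need is essentially control of $\|\varphi_s(s)\|$ and $\|\mathcal{N}(\varphi(s))\|$ by $\|\nabla\varphi(s)\|$. That is information on the dynamics which is not available at this stage.

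You invoke the same estimate for $|\nabla R|^2$, after discarding a factor you could have kept. Since $|\nabla R|\le 2|\theta|\,|D^2w|\,|\nabla w|+|\nabla w|^2|\nabla_y\theta(u)|$, a pointwise bound on $D^2w$ already puts $|\nabla\varphi|^2$ in front of every term.

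For $q>1$ the factor $1/w$ on the tail is left open. The fact that $w\ge0$ gives no positive lower bound, and $|\nabla w|\le M$ with $w(0,s)\ge\kappa$ only gives $w\ge\kappa-M|y|$.

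For comparison, the paper disposes of the tail by asserting $\int_{|y|\ge R_0}(R^2+|R|)\rho\le\eps\int(R^2+|R|)\rho$ with $R_0$ independent of $s$. Its only justification is that a fixed $L^1$ function has small tails. You were right that this step needs an argument, but yours does not close it.

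The missing idea is one you already wrote down: by \eqref{roi}, $\int|\nabla\varphi|^2\rho\ge cs^{-2}$, so no relative tail bound is needed. It suffices that the tail be $o(s^{-2})$ in absolute terms. This holds for a polynomially growing radius such as $K(s)=s^{\alpha_0/4}$, because the integrands are pointwise bounded and $\int_{|y|>K}(1+|y|^4)\rho\le Ce^{-K^2/8}$. (For $\nabla R$ the pointwise bound uses bounds on $D^2w$ and on $f''fG^2$; see the end of this paragraph.)

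The task is then the inner estimate on the growing ball $B_{K(s)}$. A slow diagonal choice of $K(s)$ based on $\varphi\to0$ on compacts gives no rate, so it cannot beat $s^{-2}$. For $q=1$ the a priori bounds supply the estimate:
\begin{itemize}
\item $w(0,s)\ge0$ by \eqref{belowrate}, and $|\nabla w|\le M$, so $\log G(u)\le -s+M|y|$.
\item Hence $|\log G(u)|\ge s/2$, and by \eqref{qlim}, $|f'(u)G(u)-1|\le Cs^{-\alpha_0}$ on $|y|\le s/(2M)$, a region containing $B_{K(s)}$.
\item The weight $|y|^4$ then costs only $K(s)^4s^{-2\alpha_0}=s^{-\alpha_0}$.
\end{itemize}
For $q>1$ the same scheme additionally needs a lower bound on $w$ on $B_{K(s)}$.

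Finally, the second part of $\nabla R$ involves $f''fG^2-f'G=fG\,(f'G)'$. This must tend to $0$ on $B_{K(s)}$, or else $\nabla w\to0$ there with a rate. Your claim that the relevant $f''$-quotient is $O(1)$ ``by convexity'' is unjustified, since convexity of $f/\log f$ only bounds $ff''/f'^2$ from below.
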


\begin{proof}[Proof of Lemma~\ref{controlRL}]
We will focus on the proof of \eqref{boundRL} for $q=1$, since the case of $q>1$ follows by the same way.
Recall that for any $W\in L^1(\R^n)$, we have 
$$
\lim_{R\to \infty}\int_{|y|\ge R}|W(y)|dy=0.
$$
Fix $\eps>0$ and recall \eqref{DwysM1}-\eqref{boundedgradu} and \eqref{eq:estderivative}. There exists $R_0>0$ such that 

$$
\begin{aligned}
\int_{|y|\ge R_0} \bigl(R^2(\varphi)+|R(\varphi)|\bigr)(y,s) \rho&\le \eps\int \bigl(R^2(\varphi)+|R(\varphi)|\bigr)(y,s) \rho\\
&\le C\eps\int |\nabla \varphi|^2 \rho
\end{aligned}
$$
 By Proposition~\ref{qssblowup} there exists $t_0(R_0,\eps) \in (T/2, T)$ such that  
	$$
\max\{M,X_0\}\le (1-\eps)\psi(t)\le	u(r,t) \le (1+\eps) \psi(t), \quad 0\leq r \leq R_0\sqrt{T-t}, \ \ t_0 < t < T,
	$$
	where $M$ is defined in \eqref{Rappelupper}. By \eqref{qlim}, we have 
$$
\begin{aligned}
\int_{|y|\le R_0} \bigl(R^2(\varphi)+|R(\varphi)|\bigr)(y,s) \rho&\le Cf_0(\psi(t))^2\int |\nabla \varphi|^2 \rho
\end{aligned}
$$
Hence
$$
\int \bigl(R^2(\varphi)+|R(\varphi)|\bigr)(y,s) \rho \le C\eps\int |\nabla \varphi|^2 \rho.
$$
By a simple computation, we have 
$$
\nabla R(\varphi)=2(f'(u)G(u)-1)D^2 \varphi \nabla \varphi+|\nabla \varphi|^2 \Big(f''(u)f(u)G^2(u)-f'(u)G(u)\Big)\nabla \varphi.
$$
It follows by the same argument as above, using \eqref{qlim}, that 
$$
\int|\nabla R(\varphi)|^2\rho\le C\eps\int |\nabla \varphi|^2\rho.
$$
It remains to prove that 
$$
\int R(\varphi)^2|y|^4\rho\le \eps \int |\nabla \varphi|^2\rho.
$$
We first note that there exists $R_\eps>0$ such that 
$$
\int_{|y|\ge R_\eps} R(\varphi)^2|y|^4\rho\le \eps \int R(\varphi)^2\rho\le \eps \int |\nabla \varphi|^2\rho.
$$
By \eqref{qlim}, choosing $s_1$ such that $R_\eps^4<s_1^{\alpha_0}$, we have 
$$
\begin{aligned}
\int_{|y|\le R_\eps} R(\varphi)^2|y|^4\rho&\le  R_\eps^4\int_{|y|\le R_\eps}  R(\varphi)^2\rho\\
&\le CR_\eps^4 s^{-2\alpha_0}\int |\nabla \varphi|^4 \rho\\
&\le C s^{-\alpha_0}\int |\nabla \varphi|^2\rho.
\end{aligned}
$$
This concludes the proof of Lemma~\ref{controlRL}.
\end{proof}

\noindent\textit{\textbf{Step 3.} Conclusion}.
Using \eqref{boundvarphiq}-\eqref{cvFVq} and Lemma~\ref{controlRL}, one easily checks that the arguments in \cite[Section~6]{CS25} remain valid after replacing $B$ by $R(\varphi)$. Hence, the conclusion follows from Proposition~\ref{TheoremBreduit} with $k=\frac12\beta(q)$, in view of \eqref{cvFVq}.

\end{proof}

\section{Proof of the lower part of Theorem~\ref{Thm}}

We shall derive 
	the lower part of Theorem \ref{Thm}:
	\begin{equation}\label{infer1-lower}
	u(x,t)\ge (1+o(1)) G^{-1}\bigg(T-t+\frac{|x|^2}{4q|\log|x|^2|}\bigg),
	\end{equation}
	as a consequence of
	  Theorem \ref{theo}.
	To this end we modify the argument in \cite[Section~5]{CS25} for the case of regular variation functions $f(u)=u^pL(u)$.
	 	\smallskip
		
 Let $ \delta>0$ (to be chosen below). 
Following \cite{C25-2,CS25}, we shall use the following properties of
$(S_\delta(t))_{t\ge0}$, the Dirichlet heat semigroup on
$B_\delta\subset\mathbb{R}^n$ (see \cite[Lemma~5.1]{CS25} for the proof).

 \begin{lemma}\label{lemSrho} 
 	(i) There exists a constant  $\lambda=\lambda(n,\delta)>0$ such that, for any $K,N>0$,
 	\begin{equation}
 	\Big[S_\delta(t) \bigl(N-K|x|^2\bigr)\Big]_+\ge e^{-\lambda t}\big[N-K(|x|^2 +2nt)\big]_+
 	\ \hbox{ in $Q:=\overline B_{\delta/2}\times[0,\infty)$.}
 	\end{equation}
 	
 	(ii) For all $\phi\in L^\infty(B_\delta)$, we have
 	\begin{equation}
 	 	\bigl|(S_\delta(t)\phi)(x)\bigr|\le C(n)t^{-n/4}\bigg(\int_{B_\delta}|\phi(z)|^2e^{-\frac{|z|^2}{4t}}dz\bigg)^{1/2}e^{\frac{|x|^2}{2t}},\quad x\in B_\delta, \ t>0.
 \end{equation}
\end{lemma}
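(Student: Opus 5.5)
For part (ii), I would compare the Dirichlet heat kernel $k_\delta$ on $B_\delta$ with the Gaussian kernel and then apply a weighted Cauchy--Schwarz inequality. By the maximum principle, $0\le k_\delta(x,z,t)\le (4\pi t)^{-n/2}e^{-|x-z|^2/4t}$. Hence
$$
|(S_\delta(t)\phi)(x)|\le (4\pi t)^{-n/2}\int_{B_\delta}|\phi(z)|e^{-\frac{|z|^2}{8t}}\cdot e^{-\frac{|x-z|^2}{4t}+\frac{|z|^2}{8t}}\,dz .
$$
Cauchy--Schwarz bounds the integral by $\bigl(\int_{B_\delta}\phi^2e^{-|z|^2/4t}dz\bigr)^{1/2}$ times $\bigl(\int_{\mathbb{R}^n}e^{-\frac{|x-z|^2}{2t}+\frac{|z|^2}{4t}}dz\bigr)^{1/2}$. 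Completing the square gives
$$
-\frac{|x-z|^2}{2t}+\frac{|z|^2}{4t}=-\frac{|z/2-x|^2}{t}+\frac{|x|^2}{2t}.
$$
So the second integral equals $C(n)t^{n/2}e^{|x|^2/2t}$, and its square root is $C(n)t^{n/4}e^{|x|^2/4t}$. Multiplying by $(4\pi t)^{-n/2}$ yields the factor $C(n)t^{-n/4}e^{|x|^2/4t}\le C(n)t^{-n/4}e^{|x|^2/2t}$, which is (ii).

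For part (i), I would build an explicit subsolution and use the comparison principle for the Dirichlet problem on $B_\delta$. Two facts drive the construction. First, $z(x,t):=N-K(|x|^2+2nt)$ solves the free heat equation exactly. Second, the maximum of two subsolutions is a subsolution, so $z_+$ is a subsolution. The only defect is the boundary condition, since $z$ may be positive on $\partial B_\delta$. To fix this, I would take a radial cutoff $\psi\in C^2(\overline B_\delta)$ with $0\le\psi\le1$, $\psi\equiv1$ on $B_{\delta/2}$ and $\psi=0$ on $\partial B_\delta$, and set
$$
Z(x,t)=e^{-\lambda t}\bigl[N\psi(x)-K(|x|^2+2nt)\bigr]_+ .
$$
Then $Z(\cdot,0)\le (N-K|x|^2)_+$ and $Z=0$ on $\partial B_\delta$. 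Wherever the bracket is positive, a direct computation (the $2nK$ terms cancel) gives
$$
(\partial_t-\Delta)Z=-e^{-\lambda t}\bigl[\lambda\bigl(N\psi-K(|x|^2+2nt)\bigr)+N\Delta\psi\bigr].
$$

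The main obstacle is to make this nonpositive with $\lambda$ depending only on $(n,\delta)$, uniformly in $N$ and $K$. The difficulty is where $\Delta\psi<0$ but the bracket is small, near the free boundary inside the annulus $B_\delta\setminus B_{\delta/2}$. I would first try replacing the cutoff by a function with $\Delta\psi+\lambda\psi\ge0$ there, such as a suitably rescaled Dirichlet principal eigenfunction glued to $1$ on $B_{\delta/2}$ through a maximum. Failing that, I would use linearity and positivity of $S_\delta$ to split $N-K|x|^2=N-K|x|^2\wedge\ldots$, treating the constant part $N$ by $S_\delta(t)N\ge Ne^{-\lambda t}$ on $B_{\delta/2}$ and the quadratic part by direct comparison of $S_\delta(t)|x|^2$ with $|x|^2+2nt$ (which is a supersolution). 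Once $Z$ is shown to be a subsolution, comparison gives $S_\delta(t)(N-K|x|^2)\ge Z$. On $\overline B_{\delta/2}$, where $\psi=1$, taking positive parts then gives exactly (i). As stated in the paper, the detailed verification is the one in \cite[Lemma~5.1]{CS25}.
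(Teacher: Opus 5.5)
Your proof of (ii) is correct; it even gives the smaller factor $e^{|x|^2/4t}$. Part (i), however, is not established, and none of your routes can be completed.

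\textbf{What fails.} First, the function $Z=e^{-\lambda t}[N\psi-K(|x|^2+2nt)]_+$ cannot be compared with $S_\delta(t)(N-K|x|^2)$ at all. Comparison requires $Z(\cdot,0)\le N-K|x|^2$, not $Z(\cdot,0)\le (N-K|x|^2)_+$. On $\{x\in B_\delta:\ K|x|^2>N\}$ you have $Z(\cdot,0)=0>N-K|x|^2$. This set is nonempty as soon as $N<K\delta^2$, which is exactly the situation in the proof of the lower bound, where $K\delta^2/N\to\infty$ as $\sigma\to0$. At best you would bound $S_\delta(t)(N-K|x|^2)_+$, which gives no lower bound on $S_\delta(t)(N-K|x|^2)$.

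Second, $Z$ is not a subsolution for any admissible $\psi$ and any $\lambda$. Where the bracket is positive but tends to $0$, your formula forces $\Delta\psi\ge0$. By choosing $K/N$ (already at $t=0$), the free boundary can be placed at any point of the annulus. But a radial $C^2$ function with $\psi\equiv1$ on $B_{\delta/2}$ and $\Delta\psi\ge0$ on the annulus satisfies $(r^{n-1}\psi')'\ge0$ and $\psi'(\delta/2)=0$. It is therefore nondecreasing and cannot vanish on $\partial B_\delta$. A condition $\Delta\psi+\lambda\psi\ge0$ does not help, since the $\lambda$-term in your formula multiplies the small bracket, not $N\psi$.

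Third, the splitting fallback only gives $S_\delta(t)(N-K|x|^2)\ge Ne^{-\lambda t}-K(|x|^2+2nt)$ on $\overline B_{\delta/2}$. This falls short of the claim by $K(1-e^{-\lambda t})(|x|^2+2nt)$. At points where $N=(1+\eps)K(|x|^2+2nt)$ with $0<\eps<e^{\lambda t}-1$, your bound is negative while the right-hand side of (i) is positive, and no choice of $\lambda$ repairs this. Bounding $S_\delta(t)1$ and $S_\delta(t)|x|^2$ separately discards the fact that both are depleted by the same boundary absorption. Deferring the details to the cited reference does not close the gap, since the construction you would be verifying is the one that fails.

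\textbf{The missing idea: a relative (multiplicative) comparison.} Let $a:=S_\delta(t)1$, which is radially nonincreasing, and $P:=|x|^2+2nt$. Since $(\partial_t-\Delta)P=0$, one computes $(\partial_t-\Delta)(Pa)=-4x\cdot\nabla a\ge0$. Moreover, $Pa$ and $S_\delta(t)|x|^2$ both vanish on $\partial B_\delta$ and agree at $t=0$. Hence $S_\delta(t)|x|^2\le P\,S_\delta(t)1$, and by linearity
\[
S_\delta(t)(N-K|x|^2)\ \ge\ \bigl(N-K(|x|^2+2nt)\bigr)\,S_\delta(t)1\quad\hbox{in } B_\delta\times(0,\infty),
\]
with no sign restriction on the data. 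Equivalently, $(N-KP)a$ is a subsolution with exactly the right initial and boundary values, including where they are negative.

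This reduces (i) to showing $S_\delta(t)1\ge e^{-\lambda t}$ on $\overline B_{\delta/2}$, and only here does a cutoff work. Take a radial $\psi\in C^2(\overline B_\delta)$ with:
\begin{itemize}
\item $0\le\psi\le1$ and $\psi\equiv1$ on $B_{\delta/2}$;
\item $\psi$ equal to a small multiple of the first Dirichlet eigenfunction near $\partial B_\delta$;
\item $\psi$ bounded below by a positive constant in between.
\end{itemize}
Then $\Delta\psi+\lambda\psi\ge0$ for some $\lambda=\lambda(n,\delta)$, since no free boundary is involved now. So $e^{-\lambda t}\psi$ is a subsolution, giving $S_\delta(t)1\ge e^{-\lambda t}\psi=e^{-\lambda t}$ on $\overline B_{\delta/2}$. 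Combining the two inequalities and taking positive parts yields (i).
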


\smallskip

 \begin{proof}[Proof of \eqref{infer1-lower}]
{\bf Step 1.} {\it Comparison argument.} 
 There exists $\delta\in(0,1)$ with $\delta<\min\{\sqrt{T},G(M)\}$,  such that
		\begin{equation}\label{uparabbdry}
		u(x,t)\ge  \tilde M:=G^{-1}\big(G( M)-\delta\big)> M \ \hbox{ in $Q:=\overline B_\delta\times[T-\delta^2,T)$,}
			\end{equation}
		 where  $M$ is a constant as in \eqref{fposM}.
	
	Fix any $\sigma \in(0,\delta^2)$ (to be chosen later) and set 
		\begin{equation}\label{deft0}
		t_0=T-\sigma.
			\end{equation}
 As in \cite{C25-2,CS25} (see also \cite{FM85}), we shall prove that 
\begin{equation}\label{a1}
	u\ge U\quad  \hbox{in} \quad B_\delta\times[t_0,T),
\end{equation} 
 where
	\begin{equation}\label{defUV}
	U=U_\sigma(x,t)= G^{-1}\Bigl\{G(V)+t_0-t\Bigr\},
	\quad\hbox{ with }V(x,t):= A+S(t-t_0)\bigl(u(t_0)- A\bigr).
	\end{equation}	
	Since $u(t_0)\ge  M$ in $\overline B_\delta$, we have $0\le S(t-t_0)(u(t_0)- M)\le \|u(t_0)\|_\infty- M$, so that $ M\le V\le \|u(t_0)\|_\infty$, hence $\tau_0:=G(\|u(t_0)\|_\infty)\le G(V)\le G( M)$. Letting 
	$$ E:=\{\tau>t_0; G(V)+t_0-t>0,\ \hbox{on } \bar{B}_{\delta}\times[t_0,\tau)\};\  \tau_1=\sup E,\ Q_1:=\bar{B}_\delta\times[t_0,\tau_1),$$
	we see that $E$ is nonempty, $\tau_1\ge t_0+\tau_0$, $U$ is well defined and smooth in $Q_1$.
By the same way as in \cite[section $5$ Step $2$]{CS25}, using \eqref{fposM}, we have
	$$\partial_tU-\Delta U\le f(U) \quad\hbox{in $\ Q_1$.}$$
Moreover, setting $T_1=\min\{T,\tau_1\}$, on $\partial B_\delta\times[t_0,T_1)$ we have $V= M$, hence $G(V)+t_0-t\ge G( M)-\delta$, so that $U \le G^{-1}\big(G( M)-\delta\big)= \tilde M$. Since also $U(t_0)=u(t_0)$, it follows from the comparison principle that  $u\ge U$ in $B_\delta\times [t_0,T_1)$ hence in particular $\tau_1\ge T$ (since otherwise $u$ would blow up before $t=T$).

\vspace{.2cm}

\noindent{\bf Step 2.} {\it Propagation of lower estimate on $u(t_0)$ in the region $|x|=O(\sqrt{T-t_0})$.}
		By Theorem~\ref{theo}, we have
		$$	 u(y\sqrt{\sigma},T-\sigma)=\begin{cases}G^{-1}\Big(\sigma\exp{\Big(\frac{|y|^2-2n}{4|\log\sigma|}
		+\frac{\mathcal{R}(y,|\log \sigma|)}{|\log\sigma|}\Big)}\Big)\quad\hbox{if }\ q=1\\
		G^{-1}\Big[\sigma\Big(1-\frac{q-1}{q}\frac{|y|^2-2n}{4|\log\sigma|}
		+\frac{\mathcal{R}(y,|\log \sigma|)}{|\log\sigma|}\Big)^{1/(1-q)}\Big]\quad\hbox{if }\ q>1;
		\end{cases}
		$$
		with
		$$\lim_{s\to\infty}\|\mathcal{R}(\cdot,s)\|_{H^1_\rho}=0\quad\hbox{ and uniformly for bounded $y$}.$$
		
	$\bullet$ {\bf Case $q>1$.} 
 Recall that $f$ is regularly varying at $\infty$ with index $q/(q-1)$; see Remark~\ref{CSLem}. In particular, $G^{-1}$ is regularly varying at $0$ with index $1-q$. Hence, for bounded $|y|$ and sufficiently small $\sigma$, we obtain
	$$
	u(x,T-\sigma)\ge G^{-1}(\sigma)\left(1+ \frac{2n(q-1)}{4q|\log\sigma|}-\frac{(q-1)|x|^2}{4q\sigma|\log\sigma|}
		+\frac{\mathcal{R}_\sigma(x)}{|\log\sigma|}\right),
	$$		
		where 
		$$
		 \mathcal{R}_\sigma(x)=\mathcal{R}(x\sigma^{-1/2},|\log \sigma|)+\mathcal{R}_0(x\sigma^{-1/2},|\log \sigma|),\ \hbox{with }\ \|\mathcal{R}_0(\cdot,s)\|_{H_\rho^1}\underset{s\to \infty}{\to }0.
		$$ 
		
	$\bullet$ {\bf Case $q=1$.} Using \eqref{qlim} and Taylor expansion for bounded $|y|$ and sufficiently small $\sigma$, we obtain
	 	$$
	 	\frac{u(x,T-\sigma)}{G^{-1}(\sigma)}\ge 1+ \frac{f(G^{-1}(\sigma))}{G^{-1}(\sigma)f'(G^{-1}(\sigma))}\left(\frac{2n}{4|\log\sigma|}-\frac{|x|^2}{4\sigma|\log\sigma|}
		+\frac{\mathcal{R}_\sigma(x)}{|\log\sigma|}\right)
	 	$$
		where 
		$$
		 \mathcal{R}_\sigma(x)=\mathcal{R}(x\sigma^{-1/2},|\log \sigma|)+\mathcal{R}_1(x\sigma^{-1/2},|\log \sigma|),\ \hbox{with }\ \sup_{s\ge s_0}\|\mathcal{R}_1(\cdot,s)\|_{H_\rho^1}<\infty.
		$$ 

In all cases, we then have 
\be{usigma}
u(x,T-\sigma)\ge G^{-1}(\sigma)\left(1+ \frac{2ng(q,\sigma)}{4|\log\sigma|}-\frac{g(q,\sigma)|x|^2}{4\sigma|\log\sigma|}
		+\frac{\mathcal{R}_2(\sigma,x)}{|\log\sigma|}\right),
\ee
	where 
	\be{defgqsig}
	0\le g(q,\sigma)=\begin{cases} \frac{q-1}{q}\quad\hbox{if }\ q>1\\
	\frac{f(G^{-1}(\sigma))}{G^{-1}(\sigma)f'(G^{-1}(\sigma))} \quad\hbox{if }\ q=1;
	\end{cases}
	\ee
	and 
	$$
	 \mathcal{R}_2(\sigma,x)=\mathcal{R}(x\sigma^{-1/2},|\log \sigma|)+\mathcal{R}_3(x\sigma^{-1/2},|\log \sigma|),\quad \hbox{with }\quad \|\mathcal{R}_3(\cdot,s)\|_{H_\rho^1}\underset{s\to \infty}{\to }0.
	 	$$ 
Therefore we have 
$$
\begin{aligned}
 V(x,t)&= M-S(t-t_0) M+S(t-t_0)\Big( u(t_0)\Big)\ge 
S(t-t_0)\Big( u(t_0\Big)\\
&\ge G^{-1}(\sigma)S(t-t_0)\Bigl(1+\frac{2ng(q,\sigma)}{4|\log\sigma|}-\frac{g(q,\sigma)|x|^2}{4\sigma|\log\sigma|}\Bigl)
+\frac{G^{-1}(\sigma)}{|\log\sigma|^2}S(t-t_0)\mathcal{R}_2\equiv V_1+V_2\\
\end{aligned}$$
hence, since $ V\ge 0$,
\be{VV1V2}
V\ge (V_1)_+-|V_2|\quad \hbox{ in $B_\delta\times(t_0,T)$.}
\ee
Using Lemma~\ref{lemSrho}(i) and recalling \eqref{deft0}, 
we may estimate $(V_1)_+$ from below for $t\in[t_0,T)$ as:
\be{V1below}
\begin{aligned}
(V_1(t))_+
&\ge e^{-\lambda(t-t_0)} G^{-1}(\sigma)\Bigl[1+\frac{2ng(q,\sigma)}{4|\log\sigma|}-\frac{g(q,\sigma)(|x|^2+2n(t-t_0))}{4\sigma|\log\sigma|}\Bigl]_+
\chi_{B_{\delta/2}} \\
&{\strut =\strut} e^{-\lambda\sigma} G^{-1}(\sigma) \Bigl[1-\frac{g(q,\sigma)|x|^2}{4\sigma|\log\sigma|}\Bigl]_+
\chi_{B_{\delta/2}}. 
\end{aligned}
\ee
To control $V_2$, we observe that, for $t\in[t_0,T)$,
$$
\begin{aligned}
\int_{B_\delta}\mathcal{R}_2^2(\sigma,z)e^{-\frac{|z|^2}{4(t-t_0)}}dz&
\le 2\int_{B_\delta} (\mathcal{R}^2+\mathcal{R}_3^2)(z\sigma^{-1/2},|\log \sigma|)e^{-\frac{|z|^2}{4\sigma}}dz\\
&\le 2\sigma^{n/2}(\|\mathcal{R}(\cdot,|\log \sigma|)\|^2_{L^2_\rho}+\|\mathcal{R}_3(\cdot,|\log \sigma|)\|^2_{L^2_\rho})
\end{aligned}
$$
and then we use Lemma~\ref{lemSrho}(ii) 
to write, for $x\in B_\delta,\ t\in[t_0,T)$:
$$\begin{aligned}
\bigl|S(t-t_0)\mathcal{R}_2(\sigma,\cdot)\bigr|(x)
&\le C(n)(t-t_0)^{-n/4}\bigg(\int_{B_\delta}(\mathcal{R}_2^2(\sigma,z)e^{-\frac{|z|^2}{4(t-t_0)}}dz\bigg)^{1/2}e^{\frac{|x|^2}{2(t-t_0)}}\\
&\le C(n)(t-t_0)^{-n/4}\sigma^{n/4}e^{\frac{|x|^2}{2(t-t_0)}}(\|\mathcal{R}(\cdot,|\log \sigma|)\|_{L^2_\rho}+\|\mathcal{R}_3(\cdot,|\log \sigma|)\|_{L^2_\rho}).
\end{aligned}$$
Denoting $D_\sigma=\overline B_{\sqrt\sigma}\times [T-\textstyle\frac{\sigma}{2},T)$ and using \eqref{deft0},
we obtain
\be{controlV2}
\sup_{D_\sigma}|V_2|
\le C(n)G^{-1}(\sigma)\frac{\|\mathcal{R}(\cdot,|\log \sigma|)\|_{L^2_\rho}+\|\mathcal{R}_3(\cdot,|\log \sigma|)\|_{L^2_\rho}}{|\log\sigma|}
=\frac{\eps(\sigma)G^{-1}(\sigma)}{|\log\sigma|}.
\ee
Here and in the rest of the proof $\eps$ denotes a generic function such that $\lim_{\sigma\to 0}\eps(\sigma)=0$.
 Assuming $\sigma\le \delta^2/4$, it follows from \eqref{VV1V2}-\eqref{controlV2} that 
\be{controlV}
\tilde V\ge e^{-\lambda\sigma} G^{-1}(\sigma)\Bigl(1-\frac{g(q,\sigma)+\eps(\sigma)}{4|\log\sigma|}\Bigl)\quad\hbox{in $D_\sigma$}.
\ee
Moreover, we may choose $\sigma_0\in(0,\delta^2/4)$ such that the RHS of \eqref{controlV} is $> M$ 
for all $\sigma\in(0,\sigma_0]$.
Therefore
$$
0<G(V)+t_0-t\le  \mu_q(\sigma,t):=G\biggl[e^{-\lambda\sigma} G^{-1}(\sigma)\Bigl(1-\frac{g(q,\sigma)+\eps(\sigma)}{4|\log\sigma|^2}\Bigl)\biggr]+T-t-\sigma
\quad\hbox{in $D_\sigma$},
$$
where the RHS is $<G( M)$ hence, by \eqref{a1}-\eqref{defUV},
\be{fact}
u\ge G^{-1}(\mu_q(\sigma,t))\quad\hbox{in $D_\sigma$}.
\ee

\vspace{.2cm}

\noindent{\bf Step 3.} {\it Asympotics of $\mu_q(\sigma,t)$ and conclusion.}
Fix any $x_0$ such that $|x_0|^2\le\sigma_0$. Choosing 
\be{choicesigma}
\sigma=|x_0|^2,
\ee
it follows from \eqref{fact} that
\be{fact2}
u(x_0, t)\ge G^{-1}(\mu_q(\sigma,t))\quad\hbox{ for $T-\frac{\sigma}{2}\le t<T$.}
\ee

To estimate $\mu_1$ from above, we first use \eqref{ell12a2} in Lemma~\ref{ell12a} to get, for some constant $C>0$,
$$
\mu_1(\sigma,t)\le (1+C\sigma)G\biggl[G^{-1}(\sigma)\Bigl(1-\frac{g(q,\sigma)+\eps(\sigma)}{4|\log\sigma|}\Bigl)\biggr]+T-t-\sigma.
$$
Next, by $G'=-1/f$ and the mean value theorem, there exists $\theta(\sigma)\in(0,1)$ such that

$$
\mu_1(\sigma,t)\le (1+C\sigma)\biggl\{\sigma+
\frac{
\frac{g(q,\sigma)+\eps(\sigma)}{4|\log\sigma|}G^{-1}(\sigma)}{f\Bigl[G^{-1}(\sigma)\Bigl(1-\theta(\sigma)\frac{g(q,\sigma)+\eps(\sigma)}{4|\log\sigma|}\Bigl)\Bigl]}\biggr\}+T-t-\sigma.
$$

By \eqref{ell12a1} in Lemma~\ref{ell12a} and the fact that $g(q,\sigma)\le C$ for any $q\ge 1$ and small $\sigma$, we have
$$
f\bigl[G^{-1}(\sigma)\bigl(1-\theta(\sigma)\frac{g(q,\sigma)+\eps(\sigma)}{4|\log\sigma|}\bigl)\bigl]
\ge (1+\eps(\sigma))f\bigl(G^{-1}(\sigma)).
$$

\begin{itemize}

\item {\bf \underline{$q=1$ case}:}
Using the definition of $g(q,\sigma)$ in \eqref{defgqsig} for $q=1$, it follows that
$$
\begin{aligned}
\mu_1(\sigma,t)
&\le (1+C\sigma)\biggl\{\sigma+\frac{(g(q,\sigma)+\eps(\sigma))G^{-1}(\sigma)}{4|\log\sigma|f(G^{-1}(\sigma))}\biggr\}+T-t-\sigma\\
&=T-t+C\sigma^2+ \frac{(g(q,\sigma)+\eps(\sigma))G^{-1}(\sigma)}{4|\log\sigma|f(G^{-1}(\sigma))}\\
&\le T-t+C\sigma^2+ \frac{(1+\eps(\sigma))}{4|\log\sigma|}\times\frac{g(q,\sigma)G^{-1}(\sigma)}{f(G^{-1}(\sigma))}\\
&\le T-t+C\sigma^2+\frac{(1+\eps(\sigma))\sigma}{4|\log\sigma|(f'G)(G^{-1}(\sigma))}\\
&\le(1+\eps(\sigma))\Bigl(T-t+\frac{\sigma}{4|\log\sigma|}\Bigr).
\end{aligned}$$

\item {\bf \underline{$q>1$ case}:}
Using $g(q,\sigma)=(q-1)/q$ along with the fact that $X/(G(X)f(X))\to 1/(q-1)$ as $X\to \infty$, we obtain
$$
\begin{aligned}
\mu(\sigma,t)
&\le (1+C\sigma)\biggl\{\sigma+\frac{(g(q,\sigma)+\eps(\sigma))G^{-1}(\sigma)}{4|\log\sigma|f(G^{-1}(\sigma))}\biggr\}+T-t-\sigma\\
&=T-t+C\sigma^2+ \frac{(q-1+\eps(\sigma))G^{-1}(\sigma)}{4q|\log\sigma|f(G^{-1}(\sigma))}\\
&\le T-t+C\sigma^2+ \frac{(1+\eps(\sigma))\sigma}{4q|\log\sigma|}\times\frac{(q-1)G^{-1}(\sigma)}{(Gf)(G^{-1}(\sigma))}\\
&\le T-t+C\sigma^2+\frac{(1+\eps(\sigma))\sigma}{4q|\log\sigma|}\\
&\le(1+\eps(\sigma))\Bigl(T-t+\frac{\sigma}{4q|\log\sigma|}\Bigr).
\end{aligned}$$
\end{itemize}

By \eqref{choicesigma}, \eqref{fact2}, using \eqref{equivGG} again, we thus deduce that for all $q\ge 1$, as $(x,t)\to (0,T)$,
$$u(x,t)\ge(1+o(1))G^{-1}\bigg(T-t+\frac{|x|^2}{4q|\log|x|^2|}\bigg),
\quad\hbox{ uniformly for $ |x|^2\ge 2(T-t)$.}$$
		
	On the other hand, by the second assertion of Proposition~\ref{qssblowup} and \eqref{equivGG} again, 
	 we have, as $(x,t)\to (0,T)$,
		$$u(x,t)= (1+o(1))G^{-1}(T-t)=(1+o(1))G^{-1}\bigg(T-t+\frac{|x|^2}{4|\log|x|^2|}\bigg),
		\ \hbox{ uniformly for $ |x|^2\le 2(T-t)$.}$$
		 Estimate \eqref{infer1-lower}, and so the lower part of \eqref{infer1}, follows.
 \end{proof}

\noindent{\bf Acknowlegement.} The author thanks Prof.~Philippe Souplet for useful suggestions during the preparation of this work.

\smallskip

\noindent{\bf Statements and Declarations.} The author states that there is no conflict of interest. 
This manuscript has no associated data.

\end{document}